\documentclass[12pt]{article}
\usepackage[font=small]{caption}
\usepackage{tikz}
\usetikzlibrary{patterns}
\usepackage{comment}
\usepackage{amsmath}     
\usepackage{amsthm}     
\usepackage{amssymb}     
\usepackage{amsfonts}
\usepackage[showonlyrefs]{mathtools}
\mathtoolsset{showonlyrefs}
\def\re{\operatorname{Re}}
\def\im{\operatorname{Im}}
\def\cl{\operatorname{cl}}
\def\meas{\operatorname{meas}}
\def\N{\mathbb{N}}
\def\R{\mathbb{R}}
\def\C{\mathbb{C}}
\def\D{\mathbb{D}}
\def\bC{\overline{\C}}
\def\Z{\mathbb{Z}}
\def\SS{\mathcal{S}}
\def\F{\mathfrak{F}}

\newtheorem{theorem}{Theorem}[section]
\newtheorem{corollary}{Corollary}[section]
\newtheorem{proposition}{Proposition}[section]
\newtheorem*{theorema}{Theorem A}
\newtheorem*{theoremb}{Theorem B}
\newtheorem*{theoremc}{Theorem C}
\newtheorem{lemma}{Lemma}[section]
\theoremstyle{remark}
\newtheorem{remark-no}{Remark}[section]
\newtheorem*{remark}{Remark}
\newtheorem{example}{Example}[section]
\newtheorem*{ack}{Acknowledgments}
\numberwithin{equation}{section}
\begin{document}  
\title{Quasiconformal surgery and linear differential equations}
\author{Walter Bergweiler and Alexandre Eremenko\thanks{Supported by NSF Grant
DMS-1361836.}}
\date{}
\maketitle

\begin{abstract}
We describe a new method of constructing transcendental entire functions
$A$ such that the differential equation $w''+Aw=0$ has two linearly independent solutions
with relatively few zeros.  In particular, we solve a problem of Bank and Laine by showing that there
exist entire functions $A$ of any prescribed order greater than $1/2$  such that the
differential equation has two linearly independent 
solutions whose zeros have finite exponent of convergence.
We show that partial results by Bank, Laine, Langley, Rossi and Shen related to
this problem are in fact best possible.
We also improve a result of Toda and show that the estimate obtained is best possible.
Our method is based on gluing solutions of the Schwarzian differential equation
$S(F)=2A$ for infinitely many coefficients~$A$.

\medskip

2010 \emph{Mathematics subject classification}:  34M10, 34M05, 30D15.

\medskip

\emph{Keywords}:  quasiconformal surgery,
conformal gluing, conformal welding, entire function, linear differential equation,
Speiser class, complex oscillation, Bank--Laine function, Bank--Laine conjecture.
\end{abstract}

\section{Introduction and main result}
We consider ordinary differential equations of the form
\begin{equation}\label{w''+Aw}
w''+Aw=0,
\end{equation}
where $A$ is an entire function.  All solutions $w$ are entire functions.
The Wronskian determinant $W(w_1,w_2)=w_1 w_2^\prime-w_1^\prime w_2$
of any two linearly independent solutions $w_1$ and $w_2$ is  a non-zero constant,
and a pair $(w_1,w_2)$  of solutions will be called \emph{normalized} if $W(w_1,w_2)=1$.

We recall that the \emph{order} of an entire function
$f$ is defined by
\begin{equation}\label{order}
\rho(f)=\limsup_{z\to\infty}\frac{\log\log|f(z)|}{\log|z|},
\end{equation}
and the \emph{exponent of convergence} of the zeros of $f$ by
$$\lambda(f)
=\inf \left\{\lambda>0\colon \sum_{\{z\neq 0\colon f(z)=0\}} m(z)|z|^{-\lambda}<\infty\right\}
=\limsup_{r\to\infty}\frac{\log n(r,0,f)}{\log r},$$
where $m(z)$ is the multiplicity of the zero~$z$,
and 
\[
n(r,0,f)=\sum_{|z|\leq r}m(z)
\]
denotes the number of zeros of $f$ in the disk $\{z\in\C\colon |z|\leq r\}$.
It is well-known~\cite[Chapter~I, \S 5]{Lev} that 
\begin{equation}\label{lr}
\lambda(f)\leq\rho(f)
\end{equation}
for every entire function~$f$.

When $A$ is a polynomial of degree~$d$, then all non-trivial solutions $w$
have order $(d+2)/2$. Some solutions can be free of zeros, but when $d>0$,
the exponent of convergence of zeros of the product $$E=w_1w_2$$ of
any two linearly independent solutions is equal to $(d+2)/2$. 

We refer to~\cite[Theorem~1]{BL2} for a proof of these results, most of which are classical.
In fact, much more precise estimates on the location of the zeros and the 
asymptotic behavior of $n(r,0,w)$ can be obtained from 
the asymptotic integration method; cf.~\cite[Section 4.6]{Hille1926},
\cite[Section~5.6]{Hille} and \cite[Theorem~6.1]{Sibuya}.

Some special equations of the form~\eqref{w''+Aw} with transcendental coefficient~$A$,
such as the Mathieu equation, have been intensively studied since the 19th century, but
one of the first systematic studies of the  general case of transcendental entire $A$
is due to Bank and Laine \cite{BL1,BL2}. When $A$ is transcendental,
then every non-trivial solution has infinite order. It is possible that two
linearly independent solutions are free of zeros; 
for example,
if $p$ is a polynomial, then
\begin{equation}\label{w12a}
w''-\frac{1}{4}\!\left(e^{2p}+(p')^2-2p''\right)w=0
\end{equation}
has solutions
\begin{equation}\label{w12}
w_{1,2}(z)=\exp\!\left(-\frac12 \left( p(z)\pm \int_0^ze^{p(t)}dt\right)\right) .
\end{equation}

However, two linearly independent solutions without zeros
can occur only when $A$ is constant, or $\rho(A)$
is a positive integer or~$\infty$.
More generally, we have the following result.

\begin{theorema}
Let $E$ be the product of two linearly independent solutions of the 
differential equation~\eqref{w''+Aw}, where $A$ is a transcendental entire function of 
finite order.
Then:
\begin{itemize}
\item[$(i)$] If $\rho(A)$ is not an integer, then
\begin{equation}\label{1}
\lambda(E)\geq\rho(A).
\end{equation}
\item[$(ii)$] If $\rho(A)\leq 1/2$, then $\lambda(E)=\infty$.
\end{itemize}
\end{theorema}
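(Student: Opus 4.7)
Differentiating $E=w_1w_2$ twice, using $w_i''=-Aw_i$ and the normalisation $W(w_1,w_2)=1$, a short calculation gives $E''=-2AE+2w_1'w_2'$ and $(E')^2-1=4E\,w_1'w_2'$. Eliminating $w_1'w_2'$ yields the Bank--Laine identity
\[
4A=\left(\frac{E'}{E}\right)^2-2\frac{E''}{E}-\frac{1}{E^2}.
\]
Transposing $1/E^2$ and applying the lemma on the logarithmic derivative together with Jensen's formula $m(r,1/E)=T(r,E)-N(r,1/E)+O(1)$ gives
\[
T(r,E)\le N(r,1/E)+\tfrac12 T(r,A)+O\bigl(\log(rT(r,E))\bigr)
\]
outside a set of finite measure. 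Assuming $\lambda(E)<\infty$ and $\rho(A)<\infty$, this forces $\rho(E)\le\max(\lambda(E),\rho(A))<\infty$, so Hadamard factorisation gives $E=Qe^h$ with $\rho(Q)=\lambda(E)$ and $h$ a polynomial of degree $n$.

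\textbf{Step 2 (proof of (i)).} Substituting $E=Qe^h$ back in, the combination $(E'/E)^2-2E''/E$ expands into an expression in $Q'/Q$, $Q''/Q$, $h'$, $h''$ of order at most $\max(\lambda(E),n-1)$, while $1/E^2=e^{-2h}/Q^2$ has order exactly $\max(\lambda(E),n)$. If $n>\lambda(E)$, the exponential term is uncancelled and $\rho(A)=n\in\N$. If $n\le\lambda(E)$, both sides have order at most $\lambda(E)$, so $\rho(A)\le\lambda(E)$. When $\rho(A)\notin\N$, only the second case survives, proving $\lambda(E)\ge\rho(A)$.

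\textbf{Step 3 (proof of (ii); main obstacle).} Under $\rho(A)\le 1/2$, the first alternative of Step~2 is impossible since it would give $\rho(A)\ge 1$; hence $n\le\lambda(E)$ and $\rho(A)\le\lambda(E)<\infty$. Upgrading this to $\lambda(E)=\infty$ is the main obstacle, since the logarithmic-derivative argument only yields the finite lower bound $\lambda(E)\ge\rho(A)$. The needed extra input is the smallness of $\rho(A)$: for $\rho(A)<1/2$, Wiman's $\cos\pi\rho$ theorem produces a sequence $r_k\to\infty$ on which $\log\min_{|z|=r_k}|A(z)|$ grows faster than any power of $r_k$, and comparing this with the maximum modulus of the right-hand side of the Bank--Laine identity (polynomially controlled in $r$ outside small discs around the zeros of $E$, whose density is bounded by $\lambda(E)<\infty$) forces a contradiction. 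The endpoint $\rho(A)=1/2$ is the most delicate case because $\cos(\pi/2)=0$; here one must invoke a finer minimum-modulus estimate of Fuchs--Barry type, or a Phragm\'en--Lindel\"of argument along asymptotic tracts, to force $A$ to be large on circles of arbitrarily large radius.
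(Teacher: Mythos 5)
The paper does not itself prove Theorem~A; it cites Bank and Laine for part~$(i)$ and for $(ii)$ when $\rho(A)<1/2$, and Rossi and Shen for the endpoint $\rho(A)=1/2$. Against that background, your Steps~1--2 recover the Bank--Laine route to $(i)$ correctly: the derivation of the Bank--Laine equation \eqref{BL} is right, the passage from $\lambda(E)<\infty$ and $\rho(A)<\infty$ to $\rho(E)<\infty$ via the logarithmic-derivative lemma is standard, and the Hadamard decomposition $E=Qe^h$ together with the dichotomy (either $\deg h>\lambda(E)$, forcing $\rho(A)=\deg h\in\N$, or $\deg h\le\lambda(E)$, forcing $\rho(A)\le\lambda(E)$) is precisely the right mechanism. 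A small inaccuracy: the non-exponential terms in the expansion of $4A$ have order at most $\lambda(E)$, not $\max(\lambda(E),n-1)$ --- $h'$ and $h''$ are polynomials, hence of order $0$ --- but since $\max(\lambda(E),n-1)<n$ whenever $n>\lambda(E)$, your weaker bound still supports the dichotomy.

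Step~3 has a genuine gap. The parenthetical claim that the right-hand side of the Bank--Laine identity is ``polynomially controlled in $r$ outside small discs around the zeros of $E$'' is false: that side contains $1/E^2$, and $|1/E^2|$ is not polynomially bounded away from zeros of $E$ --- it can be as large as $\exp(Cr^{\lambda(E)+\varepsilon})$ from the dips of the canonical product $Q$ alone, and larger still in directions where $\operatorname{Re} h$ is negative. Consequently the $\cos\pi\rho$ argument only shows that $|E|$ is extremely small on most of the circles $|z|=r_k$ where the minimum modulus of $A$ is large; that conclusion is perfectly consistent with $\lambda(E)<\infty$ and produces no contradiction on its own. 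Upgrading to $\lambda(E)=\infty$ needs the additional tract/harmonic-measure machinery --- Ahlfors distortion, two-constants theorem, an analysis of the components where $|A|$ is respectively large and small --- which is exactly what the paper deploys in its proof of the sharper Theorem~\ref{thm2}, and exactly what Rossi and Shen used. You rightly flag $\rho(A)=1/2$ as the hardest case (there $\cos\pi\rho=0$), but even the interior range $0<\rho(A)<1/2$ is not closed by the sketch as written.
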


These results were proved by Bank and Laine \cite{BL1,BL2}, except for the case
$\rho(A)=1/2$ in $(ii)$ which is due to Rossi \cite{R} and Shen~\cite{Shen1985}.

Based on these results, Bank and Laine conjectured that 
whenever $\rho(A)$ is not an integer, then $\lambda(E)=\infty$.
This conjecture raised considerable interest; see the
surveys \cite{Gund,LT} and references therein.
Counterexamples were recently constructed by the present authors~\cite{BE}
who proved the following result.

\begin{theoremb}
There is a dense set of $\rho\geq 1$, such that
there exist $A$ with $\rho(A)=\lambda(E)=\rho$. Moreover, one solution
for this $A$ is free of zeros.
\end{theoremb}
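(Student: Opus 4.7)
The natural setting is the ratio $F = w_1/w_2$ of two normalized linearly independent solutions, which satisfies $S(F) = 2A$. If $w_1$ is free of zeros then $F$ omits $0$, and the zeros of $E = w_1 w_2$ coincide with the poles of $F$. Since $w_2$ is entire we automatically have $F' = \pm 1/w_2^2$ nowhere zero, so $F$ is locally univalent and $S(F)$ is entire. The problem therefore reduces to constructing, for each $\rho$ in a prescribed dense subset of $[1,\infty)$, a locally univalent meromorphic function $F$ on $\C$ that omits $0$, whose poles have exponent of convergence equal to $\rho$, and whose Schwarzian $A = \tfrac12 S(F)$ is entire of order exactly $\rho$.

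I would produce such $F$ by quasiconformal surgery. Partition $\C$ into finitely many sectors at the origin whose opening angles are tuned to $\rho$, and on each sector prescribe a simple locally univalent holomorphic model $F_k$ that omits $0$ and whose polar divisor has density $\rho$; natural building blocks are quotients of canonical Weierstrass products by entire exponential-type factors involving a branch of $z^\rho$, chosen so that $S(F_k)$ exhibits the expected growth of an entire function of order $\rho$. Since adjacent $F_k$ will not agree on the bounding rays, I would interpolate them smoothly inside thin transition strips to obtain a quasiregular map $\tilde F : \C \to \widehat\C$ whose Beltrami coefficient $\mu$ is bounded and supported in the union of those strips. By the measurable Riemann mapping theorem there is a quasiconformal homeomorphism $\phi : \C \to \C$ with $\bar\partial\phi = \mu\,\partial\phi$, and then $F := \tilde F \circ \phi^{-1}$ is meromorphic on $\C$, still omits $0$, and has poles exactly at the $\phi$-images of the poles of $\tilde F$; setting $A := \tfrac12 S(F)$ yields the required entire coefficient, and the associated $w_1$ is zero-free.

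The main obstacle will be showing that post-composition with $\phi^{-1}$ preserves the prescribed orders. For this I would require $\phi$ to be asymptotically conformal at infinity in a sufficiently strong sense and would appeal to the Teichm\"uller--Wittich--Belinskii theorem: if the support of $\mu$ has finite logarithmic area, then $\phi(z)/z \to 1$ as $z \to \infty$, so both the pole density of $F$ and the asymptotic growth of $S(F)$ are inherited from those of $\tilde F$, yielding $\lambda(E) = \rho(A) = \rho$. The delicate engineering step is therefore to choose the transition strips thin enough that their union has finite logarithmic area, while keeping the interpolation uniformly quasiconformal and controlling the Schwarzian in the glued regions. Finally, the density of the resulting $\rho$ in $[1,\infty)$ is obtained by letting the discrete parameters of the construction — sector angles, degrees of the model factors, pole lattice spacings — vary over the rationals, which already realise a dense set of values in $[1,\infty)$.
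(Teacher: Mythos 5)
The paper does not actually prove Theorem~B here; it is cited from the authors' earlier work \cite{BE}. What the present paper does tell us about that proof is decisive for comparing: in \cite{BE} exactly two locally univalent functions are glued, and the resulting $A$ and $E$ exhibit ``spiraling'' behavior, with $E\to 0$ and $E\to\infty$ along logarithmic spirals rather than along rays. Your overall strategy --- build $F$ by quasiconformal surgery, pass to a genuine meromorphic function via the measurable Riemann mapping theorem, and invoke Teichm\"ul\-ler--Wittich--Belinskii to transport asymptotics --- is indeed the right family of ideas. But the specific decomposition you propose (finitely many sectors at the origin, with opening angles ``tuned to $\rho$'') is not the method of \cite{BE}; it is closer to what the present paper develops for Theorem~\ref{thm1}, where infinitely many pieces are glued. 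The key structural fact, a qualitative difference the paper points out explicitly, is that the two-piece gluing in \cite{BE} produces spiral, not sectorial, geometry after uniformization, and this spiraling is where the non-integer orders come from.

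Two further issues are concrete gaps rather than matters of taste. First, your building blocks are left entirely vague: ``quotients of canonical Weierstrass products by entire exponential-type factors involving a branch of $z^\rho$'' does not by itself yield a locally univalent function whose Schwarzian is entire of the prescribed order, and choosing such a factor of $z^\rho$ inside a sector is precisely the kind of thing that can wreck local univalence or introduce essential singularities in $S(F)$. The functions actually used by the authors, $z\mapsto P_m(e^z)e^{e^z}$ with $P_m$ a partial sum of $e^{-w}$, are chosen precisely because $g_m'$ is explicitly nonvanishing (see \eqref{1a1}) and the boundary values on the gluing curves can be matched. Second, the density claim --- ``let the discrete parameters vary over the rationals'' --- is asserted, not argued. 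Which orders $\rho$ are realizable depends on how the uniformizing map distorts the gluing locus at infinity; in a sectorial scheme with finitely many pieces and uniform quasiconformality this is a genuine computation, and it is not clear that your parameters sweep out a dense set without the detailed asymptotic control the authors obtain from their explicit models and the estimates that feed into the Teichm\"ul\-ler--Wittich--Belinskii hypothesis.
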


This shows that the inequality~\eqref{1} is best possible when $\rho(A)\geq 1$.
Rossi~\cite{R} showed that if $1/2\leq\rho(A)<1$, then~\eqref{1} can be improved
to the inequality
\begin{equation}\label{ineq1}
\frac{1}{\rho(A)}+\frac{1}{\lambda(E)}\leq 2.
\end{equation}
Here and in the following $1/\lambda(E)=0$ is understood to mean that $\lambda(E)=\infty$.
Solving the inequality~\eqref{ineq1} for $\lambda(E)$ we obtain $\lambda(E)\geq \rho(A)/(2\rho(A)-1)$ 
for $1/2<\rho(A)<1$.

The main result of this paper shows that~\eqref{ineq1} is also best possible.
\begin{theorem} \label{thm1}
For every $\rho\in (1/2,1)$ there exists an entire function $A$ of order  $\rho$
such that the differential equation~\eqref{w''+Aw} has two linearly independent solutions whose
product $E$ satisfies
\begin{equation}\label{eq1}
\frac{1}{\rho(A)}+\frac{1}{\lambda(E)}= 2.
\end{equation}
Moreover, $\lambda(E)=\rho(E)$ and one of the solutions of \eqref{w''+Aw} is free of zeros.
\end{theorem}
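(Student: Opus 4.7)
The approach is to build $A$ indirectly through a Schwarzian construction. If $(w_1,w_2)$ is a normalized solution pair for \eqref{w''+Aw}, then $F:=w_1/w_2$ satisfies $S(F)=2A$, and the Wronskian identity gives $F'=-1/w_2^2$, hence $E=w_1w_2=-F/F'$. Consequently, if $F$ is entire with nowhere vanishing derivative, then $w_2=1/\sqrt{-F'}$ is a zero-free entire function, $w_1=Fw_2$ is entire, and the zeros of $E$ are exactly the zeros of~$F$. The theorem therefore reduces to producing an entire locally univalent function $F$ whose zeros have exponent of convergence $\rho/(2\rho-1)$ and whose Schwarzian $A=\tfrac{1}{2}S(F)$ is entire of order exactly~$\rho$.

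To set up the construction, I would write $\lambda:=\rho/(2\rho-1)\in(1,\infty)$ and fix a geometrically increasing sequence $r_k\to\infty$. In each annulus $R_k=\{r_k<|z|<r_{k+1}\}$ I would specify a local model $F_k$, built from trigonometric or canonical-product pieces supported on carefully chosen angular subsectors so that (i) the total zero count in $R_k$ contributes to a counting function $n(r,0,F)\asymp r^{\lambda}$, (ii) on the complementary sectors $F_k$ is zero-free and moderate in size, and (iii) a direct calculation on each $F_k$ yields $\rho(S(F_k))=\rho$, realizing the extremal balance of Rossi's inequality. This step is analogous to the $\rho\geq 1$ models of Theorem~B but with the angular and radial parameters recalibrated to the constraint $\rho<1$.

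The local models are then glued into a globally quasiregular map $G\colon\C\to\C$ by interpolation in narrow collars around each circle $|z|=r_k$. The Möbius invariance $S(M\circ F)=S(F)$ is used to post-compose each $F_k$ by an appropriate Möbius transformation, making the traces on adjacent collars quasiconformally compatible; this produces a Beltrami coefficient $\mu$ supported only on the collars and with $\|\mu\|_\infty<1$. The measurable Riemann mapping theorem then supplies a quasiconformal homeomorphism $\varphi\colon\C\to\C$ solving $\varphi_{\bar z}=\mu\varphi_z$, and $F:=G\circ\varphi^{-1}$ is meromorphic, indeed entire and locally univalent by the choice of models, with $A=\tfrac{1}{2}S(F)$ entire. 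By making the collars thin enough, $\varphi$ is close to a linear map at infinity, so the zero distribution and growth of $F$ coincide asymptotically with those of the model.

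The principal obstacle is the delicate calibration required to obtain \emph{equality} in \eqref{eq1}, rather than the two separate inequalities supplied by Rossi and by the model. The quasiconformal interpolation on the collars can potentially contribute spurious growth to $S(F)$ or extra zeros to $F$, either of which would break the balance; the heart of the proof is to show that the Schwarzian of the interpolated piece is of strictly lower order than $\rho$ and that $\varphi$ distorts neither the zero count nor the maximum modulus beyond bounded factors. Once these estimates are established, the upper bounds $\rho(A)\leq\rho$ and $\lambda(E)\leq \rho/(2\rho-1)$ from the construction meet the matching lower bounds from \eqref{ineq1}, and the regularity of the zero distribution forces $\rho(E)=\lambda(E)$. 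Since $F$ is entire and locally univalent, the solution $w_2=1/\sqrt{-F'}$ is automatically zero-free, completing all claims of the theorem.
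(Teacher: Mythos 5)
Your high-level framework is correct and matches the paper's: reduce to constructing an entire, locally univalent $F$ with $S(F)=2A$, control the zero-counting function of $F$ and the growth of $A$ separately, and use a quasiconformal gluing of model pieces. However, the core technical content is absent, and your specific proposed realization differs from the paper's in ways that leave real gaps.

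First, the choice of local models is the heart of the problem, and you leave it unspecified (``trigonometric or canonical-product pieces\dots recalibrated''). The paper's construction rests entirely on the explicit choice $g_m(z)=P_m(e^z)\,e^{e^z}$, where $P_m$ is the degree-$2m$ partial sum of $e^{-w}$; these are precisely the entire, locally univalent functions of Speiser class whose line complex has the required combinatorics. Proving that the transition homeomorphisms $\phi_k$ defined by $g_{m_{k+1}}(x)=g_{m_k}(\phi_k(x))$ are close to the identity on the positive axis and close to linear scaling on the negative axis requires quantitative Szeg\H{o}-type asymptotics for the partial sums of $e^{-z}$ (Lemma~3.2 in the paper); nothing in your sketch produces a replacement for this, and it is not clear that ``trigonometric'' models would give gluing maps controllable enough to keep $S(F)$ of order $\rho$.

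Second, your gluing geometry (annular collars centered on circles $|z|=r_k$, one model per annulus) is not the paper's (strip-by-strip gluing in a half-plane, then composition with power maps $z\mapsto z^\rho$ and $z\mapsto -(-z)^\sigma$), and it's not evident the annular scheme can be made to work: when $F$ has completely regular growth with different indicators in different sectors, the model in each annulus would need to encode all sectors simultaneously, which undercuts the ``one simple model per cell'' idea that makes the collars' Beltrami coefficient small.

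Third, ``by making the collars thin enough, $\varphi$ is close to a linear map at infinity'' is not a valid step. The measurable Riemann mapping theorem only gives a quasiconformal $\varphi$; to conclude $\varphi(z)\sim z$ at infinity one needs the Teichm\"uller--Wittich--Belinskii theorem, whose hypothesis is the integral condition $\int_{|z|>1}\bigl(K_G(z)-1\bigr)\,\frac{dx\,dy}{x^2+y^2}<\infty$. Thinness of collars alone does not imply this; one also needs the excess dilatation $K_G-1$ to decay (in the paper, like $O(k^{-\delta})$ on the $k$-th cell, coming from $\phi_k$ being asymptotically the identity). You flag this as ``the principal obstacle'' and then assert ``once these estimates are established\dots''; in other words you have named the gap but not closed it — and closing it requires exactly the explicit models and the Szeg\H{o} asymptotics you have not supplied.

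In short, the framework is right but the proposal does not contain a proof: the local model family, the transition estimates, and the verification of the TWB integral condition are all missing, and these are where the work lies.
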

We note that our method also allows to construct coefficients $A$ of preassigned order
$\rho(A)\in[1,\infty)$ such that~\eqref{w''+Aw} has two linearly independent solutions whose
product has finite order, with one solution having no zeros; see Corollary~\ref{cor2} below.

The main idea in the proofs of this and subsequent results is to glue functions associated
to the differential equation~\eqref{w''+Aw} for different coefficients~$A$.
Unlike in~\cite{BE}, where two functions are glued, it is now required
to glue infinitely many functions, which creates substantial additional
difficulties;  see section~\ref{background} for a detailed description of the method.

An entire function $E$ is called a {\em Bank--Laine function} if
$E(z)=0$ implies that $E'(z)\in\{-1,1\}$. We call a Bank--Laine function
{\em special} if $E(z)=0$ implies that $E'(z)=1$. It is known~\cite[Proposition 6.4]{Laine1993}
that the product of two normalized solutions of~\eqref{w''+Aw}
is a Bank--Laine function and all Bank--Laine functions arise in this way. If $w_1$ has no
zeros, the corresponding Bank--Laine function is special.
It follows from~\eqref{ineq1} and equation~\eqref{BL} below
that $\rho(E)\geq 1$ for every transcendental Bank--Laine function~$E$; see~\cite[Theorem~1]{Shen1987}.

Many authors studied Bank--Laine functions (see the surveys \cite{Gund,LT}),
but it was open until~\cite{BE} whether there exist Bank--Laine functions 
of non-integer order -- except for those corresponding 
to a polynomial coefficient $A$ of odd degree, when the
order of $E$ is half of an integer.
Theorem~\ref{thm1} gives a complete answer to the question asked in~\cite{BL1},
what the possible orders of Bank--Laine functions are. In fact,
prescribing $\rho(A)\in (1/2,1)$ in~\eqref{eq1} is equivalent to 
prescribing $\rho(E)\in (1,\infty)$ in~\eqref{eq1}.
Since the exponential function is a special Bank--Laine function of order~$1$,
we obtain the following result.
\begin{corollary}\label{cor1}
For every $\rho\in [1,\infty)$ there exists a special Bank--Laine function of order $\rho$.
\end{corollary}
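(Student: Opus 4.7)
The plan is to reduce the corollary to Theorem~\ref{thm1} by a direct algebraic reparametrization, handling the endpoint $\rho=1$ separately by an explicit function.

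For $\rho=1$, the function $E(z)=e^z$ is an entire function of order exactly~$1$ with no zeros, so the defining implication ``$E(z)=0\Rightarrow E'(z)=1$'' is vacuously satisfied, and $E$ is therefore a special Bank--Laine function.

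For $\rho\in(1,\infty)$, I would solve the relation $1/\rho(A)+1/\rho=2$ coming from~\eqref{eq1} for $\rho(A)$, obtaining $\rho(A)=\rho/(2\rho-1)$, which lies in $(1/2,1)$. Applying Theorem~\ref{thm1} with this order produces an entire function $A$ of order $\rho(A)$ together with two linearly independent solutions $w_1,w_2$ of~\eqref{w''+Aw} such that $\lambda(E)=\rho(E)$, relation~\eqref{eq1} holds, and one solution --- say $w_1$ --- has no zeros. Rescaling by a nonzero constant to arrange $W(w_1,w_2)=1$ preserves the zero-freeness of $w_1$. The product $E=w_1w_2$ is then a Bank--Laine function by the classical correspondence recalled just before the statement of the corollary, and it is \emph{special} precisely because $w_1$ is zero-free. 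Finally, solving~\eqref{eq1} yields $\rho(E)=\rho(A)/(2\rho(A)-1)=\rho$ by the definition of $\rho(A)$, as required.

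There is essentially no serious obstacle here: the corollary is designed to be an immediate consequence of Theorem~\ref{thm1} combined with the elementary bijection $\rho(A)\leftrightarrow\rho(E)$ between $(1/2,1)$ and $(1,\infty)$ induced by~\eqref{eq1}, supplemented by the trivial example $e^z$ at the endpoint $\rho=1$. All substantive difficulty resides in the proof of Theorem~\ref{thm1} itself; Corollary~\ref{cor1} merely collects and reinterprets its output.
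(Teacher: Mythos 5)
Your proof is correct and follows essentially the same route as the paper: the bijection $\rho(A)\leftrightarrow\rho(E)$ on $(1/2,1)\times(1,\infty)$ induced by~\eqref{eq1} (using $\lambda(E)=\rho(E)$) together with Theorem~\ref{thm1}, plus $e^z$ at the endpoint $\rho=1$. The only cosmetic difference is that you insert an explicit rescaling to normalize $(w_1,w_2)$, whereas the paper's construction produces $E=F/F'$, which is automatically a (special) Bank--Laine function without further normalization.
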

A major difference between the functions $A$ and $E$ constructed in the proof of Theorem~B 
in~\cite{BE} and the functions constructed in the proof of 
Theorem~\ref{thm1} is that the 
functions in~\cite{BE} have ``spiraling'' behavior;
that is, we have $E(\gamma_1(t))\to 0$ and $E(\gamma_2(t))\to\infty$
as $t\to\infty$ on certain logarithmic spirals $\gamma_1$ and~$\gamma_2$.

In contrast, 
one can show that the functions $A$ and $E$ constructed in the proof of Theorem~\ref{thm1} 
have completely regular growth in the sense of
Levin and Pfluger; see~\cite[Chapter~3]{Lev}.
More precisely, we have the following result.
\begin{theorem} \label{thm1a}
The functions $A$ and $E$ in Theorem~\ref{thm1} can be chosen so that
\begin{equation} \label{asyAE1}
\frac12 \log |A(re^{i\theta})|
\sim \log \frac{1}{|E(re^{i\theta})|}
\sim r^\rho \cos(\rho \theta) 
\quad \text{for } |\theta|\leq (1-\varepsilon) \frac{\pi}{2\rho}
\end{equation}
while, with $\sigma=\rho/(2\rho-1)$,
\begin{equation} \label{asyAE2}
\log |E(-re^{i\theta})|
\sim r^\sigma \cos(\sigma \theta) 
\quad \text{for } |\theta|\leq (1-\varepsilon) \frac{\pi}{2\sigma}
\end{equation}
and
\begin{equation} \label{asyAE3}
 |A(-re^{i\theta})|
\sim  \frac{\sigma^2}{4}r^{2\sigma-2}
\quad \text{for } |\theta|\leq (1-\varepsilon) \frac{\pi}{2\sigma}
\end{equation}
uniformly as $r\to\infty$, for any $\varepsilon\in (0,1)$.
\end{theorem}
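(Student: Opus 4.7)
The plan is to track the quasiconformal-surgery construction used to prove Theorem~\ref{thm1} and read off the asymptotics of $E$ and $A$ in the two sectors of Theorem~\ref{thm1a}. In that construction $A$ is produced by gluing infinitely many local models of the Schwarzian equation $S(F)=2A$, and the Beltrami coefficient of the associated quasiconformal interpolation has support concentrated in narrow strips around the welding arcs; these arcs can be placed so as to be bounded away from the positive and negative real rays. Consequently, on each of the two sectors named in the theorem, the interpolating quasiconformal map $\varphi$ is asymptotically conformal, and the resulting $E$ and $A$ differ from explicit model functions only by factors that are unimodular to leading order.

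For~\eqref{asyAE1} the right-hand model is chosen so that the zero-free solution satisfies $w_1\sim\exp(-\tfrac12 z^\rho)$ (principal branch) and the second solution is normalized so that $E=w_1 w_2\sim\exp(-z^\rho)$ uniformly on $|\arg z|\leq (1-\varepsilon)\pi/(2\rho)$; taking real parts yields the second asymptotic in~\eqref{asyAE1}. The asymptotic for $\log|A|$ then follows from the Bank--Laine identity
\begin{equation*}
4A = \left(\frac{E'}{E}\right)^2 - 2\frac{E''}{E} - \frac{1}{E^2},
\end{equation*}
because $|E|$ is exponentially small on this sector while standard logarithmic-derivative estimates for the model bound $E'/E$ and $E''/E$ by polynomials in~$r$. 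Hence the $-1/E^2$ term dominates, giving $|A|\sim 1/(4|E|^2)$ and therefore $\tfrac12\log|A|\sim -\log|E|\sim r^\rho\cos(\rho\theta)$.

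For~\eqref{asyAE2} and~\eqref{asyAE3} the left-hand model is symmetric: $w_1\sim\exp(\tfrac12(-z)^\sigma)$, with the second solution chosen so that $E\sim\exp((-z)^\sigma)$ on $|\arg(-z)|\leq(1-\varepsilon)\pi/(2\sigma)$, which gives~\eqref{asyAE2} directly. On this sector $|E|$ is large, so instead of the Bank--Laine identity I would use the direct formula $A=-w_1''/w_1=h''-(h')^2$ with $h=-\log w_1\sim-\tfrac12(-z)^\sigma$. Then $h'\sim(\sigma/2)(-z)^{\sigma-1}$ dominates $h''$ for large $r$, giving $A\sim-(\sigma^2/4)(-z)^{2\sigma-2}$, whose modulus is $(\sigma^2/4)r^{2\sigma-2}$, as required by~\eqref{asyAE3}.

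The main obstacle will be the rigorous verification that the quasiconformal correction contributes only an $o(1)$ perturbation of the models inside each sector. Concretely, one must bound the $L^\infty$ norm of the Beltrami coefficient on each compact piece lying inside the sector by an amount that tends to~$0$ as the piece is pushed toward~$\infty$, and combine this with standard distortion bounds for normalized quasiconformal homeomorphisms with small dilatation to control $\varphi(z)/z$ together with its first two derivatives. The chain rule then transfers the asymptotics of $E_{\text{model}}$, $E_{\text{model}}'$, and $E_{\text{model}}''$ to $E,E',E''$ uniformly on each specified sector, and the estimates for $A$ follow as above.
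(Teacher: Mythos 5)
The overall strategy — read off the asymptotics of $F$ from the gluing construction on each of the two sectors, use $E=F/F'$, and then obtain $A$ either from the Bank--Laine identity~\eqref{BL} (where $|E|$ is small) or from the relation $A=-w_1''/w_1$ with $w_1^2=1/F'$ (where $|E|$ is large) — is right, and is in fact what the paper does. Your variant for~\eqref{asyAE3}, computing $A=h''-(h')^2$ with $h=-\log w_1$, is a small but legitimate departure from the paper, which instead notices from~\eqref{BL} that both $-2E''/E$ and $(E'/E)^2$ are $\sim\sigma^2(-z)^{2\sigma-2}$ so $4A\sim -\sigma^2(-z)^{2\sigma-2}$; either route closes the argument once the needed derivative asymptotics are in hand.

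However, your plan for obtaining those derivative asymptotics contains a genuine gap. You propose to control $\varphi(z)/z$ \emph{together with its first two derivatives} by combining a smallness bound on the Beltrami coefficient with ``standard distortion bounds,'' and then to transfer $E_{\text{model}}',E_{\text{model}}''$ to $E',E''$ by the chain rule. This does not work: the normalizing homeomorphism $\tau$ from the Teichm\"uller--Wittich--Belinskii theorem is quasiconformal but in general not differentiable in the classical sense (only a.e.\ differentiable), and small dilatation gives no pointwise control on $\tau'$ or $\tau''$; moreover, the relation is $G=F\circ\tau$ with $G$ merely quasiregular, so there is no holomorphic chain-rule decomposition relating $F'$ to a ``model derivative'' through $\tau$. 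The asymptotics~\eqref{TWB2} furnish only $\tau(z)\sim z$, nothing about derivatives. The paper avoids this entirely: it first establishes the asymptotics of the \emph{values} $\log\log F(z)\sim z^\rho$ (resp.\ $\log(F(z)-1)\sim -(-z)^\sigma$) on a sector, using only $\tau(z)\sim z$, and then differentiates these asymptotic equalities for the \emph{holomorphic} function $F$ via Cauchy's integral formula on circles $|\zeta-z|=c|z|$, paying the price of shrinking the sector at each differentiation (hence the cascade $\varepsilon_1<\varepsilon_2<\cdots<\varepsilon$). You should replace your quasiconformal-derivative step by this holomorphic-differentiation argument; without it, the claimed asymptotics for $E'/E$, $E''/E$ and for $w_1'',w_1'$ are unjustified.
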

Since the set of rays of completely regular growth is closed~\cite[Chapter~3, Theorem~1]{Lev},
it follows that functions $A$ and $E$ satisfying the
conclusions of Theorem~\ref{thm1a} have indeed completely regular growth.

In \cite{BankLaineLangley1986} and various subsequent papers 
the differential equation~\eqref{w''+Aw} is studied 
under suitable hypotheses on the asymptotic behavior of the coefficient~$A$.
In particular, the following result was obtained in~\cite[Theorem~1]{BankLaineLangley1986}.
\begin{theoremc}
Let $A$  be a transcendental entire function of finite order with the following
property: there exists a subset $H$ of $\R$ of measure zero such that for 
each $\theta\in\R\backslash H$, either
\begin{itemize}
\item[$(i)$] $r^{-N}|A(re^{i\theta})|\to \infty$ as $r\to\infty$, for each $N>0$, or
\item[$(ii)$] $\int_0^\infty r|A(re^{i\theta})|dr<\infty$, or
\item[$(iii)$] there exists positive real numbers $K$ and~$b$, and a non-negative real 
number~$n$, all possibly depending on $\theta$, such that $(n+2)/2<\rho(A)$ and 
$|A(re^{i\theta})|\leq Kr^n$ for all $r\geq b$. 
\end{itemize}
Let $E$ be the product of two linearly independent solutions
of~\eqref{w''+Aw}. Then $\lambda(E)=\infty$.
\end{theoremc}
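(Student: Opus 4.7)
The plan is to argue by contradiction, assuming $\lambda(E)<\infty$. The algebraic backbone is the Bank--Laine identity
\begin{equation*}
4A=\left(\frac{E'}{E}\right)^2-2\frac{E''}{E}-\frac{1}{E^2},
\end{equation*}
which ties $A$ to the product $E$ of normalized solutions. Applying Nevanlinna theory to this identity, the hypothesis $\rho(A)<\infty$ together with $\lambda(E)<\infty$ forces $\rho(E)<\infty$, so that $E$ admits a Hadamard factorization $E=\Pi e^Q$ with $\Pi$ a canonical product of finite genus built from the zeros of $E$ and $Q$ a polynomial. Gundersen-type logarithmic derivative estimates then provide polynomial bounds for $E'/E$ and $E''/E$ along almost every ray $\arg z=\theta$, outside an $r$-set of finite logarithmic measure.

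The core of the argument is now a case analysis on the set $\R\setminus H$ of directions governed by the three hypotheses. On a type~(ii) ray, the integrability of $r|A(re^{i\theta})|$ allows a variation-of-parameters argument producing solutions asymptotic to $1$ and $z$, whence $\log|E(re^{i\theta})|=O(\log r)$. On a type~(iii) ray, Hille's asymptotic integration yields $\log|w_j(re^{i\theta})|=O(r^{(n+2)/2})$, so since $(n+2)/2<\rho(A)$ by hypothesis, $\log|E|$ grows strictly slower than $r^{\rho(A)}$ there. On a type~(i) ray, by contrast, $|A|$ is super-polynomial; combining the Bank--Laine identity with the polynomial bound on $(E'/E)^2-2(E''/E)$ from the Hadamard factorization forces $1/|E|^2$ to absorb the super-polynomial growth, so $|E|$ must be exceedingly small along the ray outside the exceptional $r$-set.

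One then concludes in two ways. If the set of type~(i) rays has positive measure, the extreme smallness of $|E|$ along such a ray, combined with a minimum modulus / harmonic majorant estimate for entire functions of finite order, produces a density of zeros of $E$ incompatible with $\lambda(E)<\infty$. Otherwise almost every ray in $\R\setminus H$ is of type~(ii) or~(iii), and a sectorial Phragm\'en--Lindel\"of argument applied across $\R\setminus H$ (which has full measure) propagates the upper bounds to force $\rho(E)<\rho(A)$; but then the Bank--Laine identity gives $\rho(A)\leq\rho(E)<\rho(A)$, the desired contradiction.

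The main obstacle will be the simultaneous control of the two exceptional sets --- the $r$-set of finite logarithmic measure coming from Gundersen's estimates, and the $\theta$-set on which $|E|$ may degenerate --- together with the verification that the Phragm\'en--Lindel\"of argument applies uniformly in each sector of $\R\setminus H$. A density estimate for the zero set of $E$, in the spirit of Rossi's $\cos\pi\rho$-theorem used for part $(ii)$ of Theorem~A, should close the remaining gap.
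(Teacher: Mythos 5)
The paper does not prove Theorem~C; it quotes it from Bank, Laine and Langley (1986, Theorem~1), so there is no ``paper's own proof'' to compare against. The only additional observation the paper makes is the remark immediately after the statement that the original argument actually establishes the stronger variant with $(n+2)/2<\rho(E)$ in place of $(n+2)/2<\rho(A)$ in case $(iii)$, which is what Proposition~1.1 exploits.

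Evaluating the proposal on its own terms: the skeleton (assume $\lambda(E)<\infty$, deduce $\rho(E)<\infty$, use the Bank--Laine identity with logarithmic-derivative estimates, and split into the three ray types) is the right skeleton, but the obstacle you flag at the end is not a loose end --- it is where the argument, as sketched, actually fails. Since $H$ has measure zero but may be dense, $\R\backslash H$ need not contain a single nondegenerate interval, so there are no ``sectors of $\R\backslash H$'' to which Phragm\'en--Lindel\"of could be applied; one must instead work with quantities that integrate over the whole circle (the Nevanlinna proximity function, Jensen's formula, Tsuji's or Baernstein's spread-type estimates), which is precisely how the Bank--Laine school handles a measure-zero exceptional direction set. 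Two further points would need repair even if sectors existed. First, in case $(iii)$ the exponent $n$ depends on $\theta$ with $(n+2)/2$ allowed to approach $\rho(A)$, so the ray-wise bounds are not uniform on any sector and cannot force $\rho(E)<\rho(A)$. Second, on a type-$(i)$ ray the Bank--Laine identity combined with polynomial logarithmic-derivative bounds gives only $|E(re^{i\theta})|\lesssim |A(re^{i\theta})|^{-1/2}$, which is super-polynomial decay but in general not exponential decay; an entire function of finite order can be super-polynomially small along a positive-measure set of rays without contradiction and without zeros there (think of $\exp(-z^q)$), so the conclusion ``a density of zeros incompatible with $\lambda(E)<\infty$'' does not follow from smallness alone. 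The genuine missing ingredient is a measure-theoretic replacement for the sectorial Phragm\'en--Lindel\"of step and a sharper quantitative use of the type-$(i)$ smallness inside an integral (proximity-function) estimate, rather than a pointwise/sectorial one.
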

We note that the condition $(n+2)/2<\rho(A)$ in $(iii)$ is sharp by the example~\eqref{w12a}
with the solutions~\eqref{w12}.

Since $n\geq 0$ we see that $(iii)$ can be satisfied only if $\rho(A)>1$.
However, the proof in~\cite{BankLaineLangley1986} shows that
in $(iii)$ one may replace $\rho(A)>(n+2)/2$ by $\rho(E)>(n+2)/2$.
Using~\eqref{ineq1} and noting that
$\rho(E)\geq\lambda(E)$ we thus also obtain a result for $1/2<\rho(A)<1$.
A short computation yields the following.
\begin{proposition}\label{prop1}
Let $A$ be a entire function satisfying $1/2<\rho(A)<1$.
Then the conclusion of Theorem~C holds when the condition $(n+2)/2<\rho(A)$
in $(iii)$ is replaced by $n<\rho(A)/(2\rho(A)-1)$.
\end{proposition}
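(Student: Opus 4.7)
The plan is to prove Proposition~\ref{prop1} by combining the strengthened form of Theorem~C noted in the paragraph preceding the proposition---the version in which the condition $\rho(A)>(n+2)/2$ in~(iii) is replaced by $\rho(E)>(n+2)/2$---with Rossi's inequality~\eqref{ineq1}.  I would argue by contradiction: assuming $\lambda(E)<\infty$, I would use Rossi's inequality to produce a lower bound on $\rho(E)$, and then verify that this lower bound, together with the replaced hypothesis on~$n$, forces $(n+2)/2<\rho(E)$.  The strengthened Theorem~C then delivers the contradiction $\lambda(E)=\infty$.

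Concretely, suppose the hypothesis of the proposition holds but, for contradiction, $\lambda(E)<\infty$.  Since $1/2<\rho(A)<1$, Rossi's inequality~\eqref{ineq1} yields
\[
\lambda(E)\geq \frac{\rho(A)}{2\rho(A)-1}.
\]
Combined with the universal bound~\eqref{lr} in the form $\rho(E)\geq\lambda(E)$, this gives
\[
\rho(E)\geq \frac{\rho(A)}{2\rho(A)-1},
\]
which is strictly greater than $1$ since $\rho(A)\in(1/2,1)$.  The remaining step---the ``short computation'' alluded to in the paper---is to verify that the replaced hypothesis $n<\rho(A)/(2\rho(A)-1)$, combined with this lower bound, forces the strict inequality $(n+2)/2<\rho(E)$, so that the strengthened form of condition (iii) is satisfied on every ray where the original (iii) was assumed.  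With (i) and (ii) unchanged, the strengthened Theorem~C then applies and yields $\lambda(E)=\infty$, contradicting our assumption and completing the proof.

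The main obstacle is precisely this last algebraic step: careful bookkeeping of the strict and non-strict inequalities involved is needed to confirm that the bound on~$n$ in terms of $\rho(A)$ stated in the proposition is the right one to reach $(n+2)/2<\rho(E)$ from the Rossi lower bound, uniformly across the full range $\rho(A)\in(1/2,1)$.  Everything else is a direct invocation of the strengthened Theorem~C together with \eqref{ineq1} and \eqref{lr}, both already recorded in the paper.
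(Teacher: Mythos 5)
Your outline is exactly the one the paper sketches, but you defer the crucial step, and when it is carried out it does not close over the whole stated range. Write $\sigma=\rho(A)/(2\rho(A)-1)$. If $\lambda(E)<\infty$, then Rossi's inequality~\eqref{ineq1} together with $\rho(E)\geq\lambda(E)$ gives only $\rho(E)\geq\sigma$. To invoke the strengthened Theorem~C with hypothesis $\rho(E)>(n+2)/2$ you therefore need $(n+2)/2<\sigma$, that is $n<2\sigma-2$. The proposition's hypothesis $n<\sigma$ is equivalent to $(n+2)/2<1+\sigma/2$, and $1+\sigma/2\leq\sigma$ holds if and only if $\sigma\geq 2$, equivalently $\rho(A)\leq 2/3$. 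Thus the implication $n<\sigma\Rightarrow(n+2)/2<\rho(E)$ cannot be drawn for $\rho(A)\in(2/3,1)$: if $\rho(E)=\sigma$, any $n\in(2\sigma-2,\sigma)$ satisfies $n<\sigma$ but not $(n+2)/2<\rho(E)$.

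This gap is not cosmetic. The route you describe proves the conclusion under the hypothesis $n<2\rho(A)/(2\rho(A)-1)-2$, and that is the bound consistent with the ``best possible'' remark following the proposition: the extremal functions of Theorem~\ref{thm1a} have $\lambda(E)=\sigma<\infty$, satisfy condition~$(i)$ of Theorem~C on almost every ray into the right half-plane, and by~\eqref{asyAE3} satisfy $|A(re^{i\theta})|\leq Kr^{2\sigma-2}$ on almost every ray into the left half-plane. When $\rho(A)>2/3$ one has $2\sigma-2<\sigma$, so this example would meet the stated hypothesis of Proposition~\ref{prop1} with $n=2\sigma-2<\sigma$, yet $\lambda(E)<\infty$. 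So the ``short computation'' you postponed is the crux of the matter: carried out, it shows that the exponent $\rho(A)/(2\rho(A)-1)$ in the statement must be read as $2\rho(A)/(2\rho(A)-1)-2$ for your argument (and the paper's sketch) to cover all of $\rho(A)\in(1/2,1)$; for $\rho(A)\leq 2/3$ the two bounds coincide or the stated one is stronger, so the conclusion follows, but for $\rho(A)\in(2/3,1)$ the stated bound does not yield $\rho(E)>(n+2)/2$ from Rossi's inequality alone.
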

Note that the latter condition is equivalent to 
\[
\frac{n+2}{2}<1+\frac{\rho(A)}{4\rho(A)-2},
\]
so for $\rho(A)<1$ this is indeed a weaker condition.
It follows from \eqref{asyAE3} that this modified condition is best possible.

We denote the lower order of an entire function~$f$, which is
defined by taking the lower limit in~\eqref{order}, by $\mu(f)$.
Huang~\cite{Huang1991} showed that Theorem~A, part (ii), and~\eqref{ineq1}
hold with $\rho(A)$ replaced by~$\mu(A)$.
We note that for the function $A$ constructed in the proof of Theorem~\ref{thm1} we have
$\mu(A)=\rho(A)$.

Toda~\cite{Toda1993} showed that Theorem~A and~\eqref{ineq1}, and in fact their
refinements obtained by Huang, can be strengthened
if the set $\{z\in\C\colon |A(z)|> K\}$ has more than one component for some $K>0$.
Let $N$ be the number of such components.
Then $N/2\leq \mu(A)$ by the Denjoy--Carleman--Ahlfors Theorem;
see \cite[Chapter~5, Theorem 1.2]{GO} or \cite[Chapter~XI, \S4]{NevanlinnaEAF}.
Toda~\cite[Theorem~3]{Toda1993} proved that 
if $\mu(A)= N/2$, then $\lambda(E)=\infty$.  Moreover, for $N/2<\mu(A)< N$
the inequality~\eqref{ineq1} can be improved to
\begin{equation}\label{ineq2}
\frac{N}{\mu(A)}+\frac{1}{\lambda(E)}\leq 2.
\end{equation}
Toda actually showed that these results hold for the number $N$ of unbounded components
of the set $\{z\in\C\colon |A(z)|> K|z|^p\}$ if $p>0$ and $K>0$.
Note that for 
$f(z)=z^{N}+\exp(z^{N})$ the set $\{z\in\C\colon |f(z)|> K\}$  is connected for all
$K>0$  whilst  $\{z\in\C\colon |f(z)|> 4|z|^{N+1}\}$ has $N$ unbounded components.

We sharpen Toda's inequality and show that the result obtained is best possible.
\begin{theorem} \label{thm2}
Let $A$ be an entire function, $N\geq 2$, $p>0,$ and $K>0$.
Suppose that  the set $\{z\in\C\colon |A(z)|> K|z|^p\}$ has $N$ unbounded components 
and let $E$ be the product of two linearly independent solutions of the 
differential equation~\eqref{w''+Aw}. Then $\mu(A)\geq N/2$ and if $\mu(A)< N$, then
\begin{equation}\label{ineq3}
\frac{N}{\mu(A)}+\frac{N}{\lambda(E)}\leq 2.
\end{equation}
\end{theorem}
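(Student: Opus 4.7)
The plan is to adapt and sharpen Toda's argument from~\cite{Toda1993} by counting zeros of $E$ separately in each of the $N$ unbounded components of $\{z\in\C\colon|A(z)|>K|z|^p\}$ and summing the resulting contributions; the extra factor $N$ in front of $1/\lambda(E)$ should arise precisely from this tract-by-tract accounting. The first assertion $\mu(A)\geq N/2$ is immediate from the Denjoy--Carleman--Ahlfors theorem applied to the subharmonic function $u(z)=\log^+(|A(z)|/(K|z|^p))$, whose super-level set $\{u>0\}$ has $N$ unbounded direct tracts by hypothesis.

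For~\eqref{ineq3}, I would label the unbounded components as $\Omega_1,\dots,\Omega_N$ and let $\theta_j(r)$ denote the angular measure of $\Omega_j\cap\{|z|=r\}$. Two ingredients are needed inside each tract. First, Ahlfors's length-area estimate yields the lower bound
$$\log M_{\Omega_j}(r,A)\geq \pi\int_{r_0}^{r}\frac{dt}{t\,\theta_j(t)}+O(1).$$
Second, since $|A|$ is large in $\Omega_j$, the Liouville--Green (WKB) asymptotic integration method (cf.~\cite[Section~5.6]{Hille}, \cite[Theorem~6.1]{Sibuya}) provides asymptotic forms $w_{1,2}\sim A^{-1/4}\exp\!\left(\pm i\int^z\!\sqrt{A}\right)$ for the solutions of~\eqref{w''+Aw}, from which one deduces that the number $n_j(r)$ of zeros of $E=w_1w_2$ in $\Omega_j\cap\{|z|\leq r\}$ satisfies
$$n_j(r)\gtrsim \frac{1}{\pi}\int_{L_j(r)}|A(\zeta)|^{1/2}\,|d\zeta|$$
along a suitable curve $L_j(r)$ threading the tract.

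These two ingredients should combine, via a tract-localised version of Rossi's Nevanlinna-theoretic argument based on the Bank--Laine identity $4A=(E'/E)^2-2(E''/E)-1/E^2$, to yield a local Rossi-type inequality inside each $\Omega_j$. Summing over $j=1,\dots,N$ and exploiting the angular constraint $\sum_j\theta_j(r)\leq 2\pi$ on the left-hand side, together with the trivial bound $\lambda_j\leq \lambda(E)$ for the tract-wise zero order on the right-hand side, then produces~\eqref{ineq3}.

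The hardest step will be this final summation: a naive sum of the $N$ local inequalities loses the sharp factor $N$, so one must carefully exploit the geometric relationship between the tract widths $\theta_j(r)$, the local growth orders of $A$ in $\Omega_j$, and the local zero densities of $E$, perhaps via a weighted sum or a symmetrisation argument that models the $N$-fold covering behaviour implicit in the tract structure. A secondary technical difficulty is justifying the WKB asymptotics uniformly in each tract, particularly near the tract boundary $\partial\Omega_j$ where $|A|$ drops to the threshold $K|z|^p$ and the approximation degenerates.
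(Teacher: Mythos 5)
Your proposal correctly identifies the Denjoy--Carleman--Ahlfors step for $\mu(A)\geq N/2$ and the Ahlfors length--area estimate as an ingredient for the growth bound on~$A$, but the central mechanism you describe for obtaining the factor $N/\lambda(E)$ is not viable, and it differs essentially from what actually works.

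The problem is with the WKB/asymptotic-integration step. You want to argue that inside each tract $\Omega_j$ of $A$ the function $E$ accumulates zeros at a rate governed by $\int|A|^{1/2}$, so that tract-wise zero counting produces the factor $N$ on $1/\lambda(E)$. But Liouville--Green asymptotics require regularity of $A$ that is simply not available here: $A$ is an arbitrary entire function with $N$ unbounded tracts, and near $\partial\Omega_j$ (where $|A|$ drops to the threshold $K|z|^p$) the approximation collapses, as you yourself note. There is no way to patch this for a general~$A$. More importantly, the structure of the correct proof is the \emph{opposite} of what you propose: one does not show that $E$ has many zeros inside the tracts of~$A$, one shows that $E$ is \emph{small} there. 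Using the Bank--Laine identity \eqref{BL}, the logarithmic-derivative lemma, a growth estimate on asymptotic curves (Lemma~\ref{la-th2-3}), and a two-constants/harmonic-measure argument (Lemma~\ref{la-th2-2}), one finds curves $\gamma_j\subset U_j$ tending to infinity on which $|E(z)|\leq\exp(-c|z|^\beta)$. Consequently the set $\{|z^ME(z)|>K\}$ has $N$ unbounded components $W_1,\dots,W_N$, and these are (essentially) disjoint from the tracts $V_j$ of $A$. The factor $N$ in front of $1/\lambda(E)$ therefore comes from applying the Ahlfors distortion/Cauchy--Schwarz argument to the $N$ tracts of $z^ME$ together with the angular constraint $\theta(r,V)+\theta(r,W)\leq 2\pi+o(1)$ --- not from tract-wise zero counting.

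A secondary gap: even if you could bound $\rho(E)$ from below inside each $\Omega_j$, you would obtain an inequality involving $\rho(E)$, not $\lambda(E)$, and the bridge from one to the other is not automatic. The paper uses Toda's lemma (Lemma~\ref{la-th2-4}) for this: having established $N/\mu(A)+N/\rho(E)\leq 2$ and knowing $\mu(A)<N$, one deduces $\rho(E)>N>\mu(A)$, which by that lemma forces $\lambda(E)=\rho(E)$. Your proposal does not address this passage at all, yet it is indispensable, since in general $\lambda(E)<\rho(E)$ is possible.
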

We note that if $\mu(A)=N$, then we may have $\lambda(E)=0$ by the examples 
given in~\eqref{w12}.
\begin{theorem} \label{thm3}
Let $N\in\N$ and $\rho\in (N/2,N)$. Then there exists an entire function $A$ 
satisfying  $\mu(A)=\rho(A)=\rho$ such that
$\{z\in\C\colon |A(z)|> K|z|^p\}$ has $N$ unbounded components for $p=2N\rho/(2\rho-N)-2$
and large $K$ and
such that the differential equation~\eqref{w''+Aw} has two linearly independent solutions whose
product $E$ satisfies
\begin{equation}\label{eqN}
\frac{N}{\mu(A)}+\frac{N}{\lambda(E)}= 2.
\end{equation}
Moreover, $\lambda(E)=\rho(E)$ and one of the solutions of \eqref{w''+Aw} is free of zeros.
\end{theorem}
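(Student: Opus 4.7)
The plan is to derive Theorem~\ref{thm3} from Theorems~\ref{thm1} and~\ref{thm1a} by pulling back along the $N$-fold branched cover $z\mapsto z^N$ and repairing the resulting singularity at the origin by a local quasiconformal surgery. Set $\rho_0:=\rho/N$; since $\rho\in(N/2,N)$ we have $\rho_0\in(1/2,1)$, so Theorem~\ref{thm1}, in the precise form of Theorem~\ref{thm1a}, supplies an entire coefficient $A_0$ of order $\rho_0$ together with linearly independent solutions $v_1,v_2$ of $v''+A_0v=0$, with $v_1$ zero-free and $E_0=v_1v_2$ satisfying $1/\rho_0+1/\lambda(E_0)=2$ as well as~\eqref{asyAE1}--\eqref{asyAE3}. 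Put $F_0:=v_2/v_1$, so $S(F_0)=2A_0$, and form
\[
F(z):=F_0(z^N),\qquad A(z):=\tfrac12 S(F)(z)=N^2z^{2N-2}A_0(z^N)-\frac{N^2-1}{4z^2}.
\]

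Next I would transport the asymptotics of Theorem~\ref{thm1a} through $z\mapsto z^N$. Writing $\sigma:=N\rho/(2\rho-N)=N\sigma_0$ with $\sigma_0=\rho_0/(2\rho_0-1)$, the exponential asymptotic~\eqref{asyAE1} for $A_0$ yields, in each of the $N$ sectors $\{|\theta-2\pi k/N|\leq(1-\varepsilon)\pi/(2\rho)\}$,
\[
\tfrac12\log|A(re^{i\theta})|\sim r^\rho\cos\!\bigl(\rho(\theta-2\pi k/N)\bigr),
\]
while~\eqref{asyAE3} gives $|A(re^{i\theta})|\sim(\sigma^2/4)r^{2\sigma-2}$ in the $N$ complementary sectors, around the arguments $(2k+1)\pi/N$. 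These two asymptotics immediately give $\mu(A)=\rho(A)=\rho$, show that $\{z\in\C\colon|A(z)|>K|z|^p\}$ has exactly $N$ unbounded components for $p=2\sigma-2$ and large $K$, and, combined with the analogous pullback of $E_0$ via the Liouville substitution $w(z)=z^{(1-N)/2}v(z^N)$, give $\lambda(E)=\rho(E)=\sigma$, hence~\eqref{eqN}; the pullback of $v_1$ provides the zero-free solution.

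The main obstacle is that the $A$ displayed above is not entire: the term $-(N^2-1)/(4z^2)$ has a double pole at the origin, and the Liouville substitution produces ramified (hence non-entire) would-be solutions when $N\geq 2$. I would repair this by a local quasiconformal surgery on a small disk $\{|z|<\delta\}$, parallel to the gluing used in the proof of Theorem~\ref{thm1}. Concretely, replace $F$ on $\{|z|<\delta\}$ by an unramified quasi-regular map that agrees with $F$ on $\{|z|=\delta\}$ and has uniformly bounded Schwarzian inside, so that the resulting Beltrami coefficient is supported on a thin annulus. Since the base data and the cover $z\mapsto z^N$ are both $N$-fold rotationally symmetric, the interpolation can be taken symmetric, and the measurable Riemann mapping theorem produces an $N$-fold symmetric straightening $\phi$ for which $F\circ\phi^{-1}$ is meromorphic on $\C$ with entire Schwarzian; half of this Schwarzian is the desired entire $A$, and $w_1,w_2$ are the standard pair built from the developing map $F\circ\phi^{-1}$. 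Because $\phi$ is conformal outside a compact set, the asymptotics and $N$-component structure survive intact, as do the zero-free character of $w_1$ and the identity $\lambda(E)=\rho(E)=\sigma$. The core technical step is verifying that this surgery is compatible with the Bank--Laine relation~\eqref{BL} and preserves completely regular growth, but this is modeled directly on the analogous step in the proof of Theorem~\ref{thm1}.
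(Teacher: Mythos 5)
The pullback idea is the right starting point, and your formula
\[
A(z)=N^2z^{2N-2}A_0(z^N)-\frac{N^2-1}{4z^2}
\]
is correct, but the ``local quasiconformal surgery on a small disk'' you propose to remove the singularity at the origin does not exist when $N\geq 2$, and this is exactly the obstacle the paper's argument is designed around. The obstruction is topological. On the circle $\{|z|=\delta\}$ the boundary values of $F_0(z^N)$ wind $N$ times around $F_0(0)=2$ (since $F_0(z^N)\approx 2+F_0'(0)\delta^Ne^{iN\theta}$). Any continuous map $g$ on the closed disk that is open and locally injective in the interior, extends continuously to these boundary values, and hence has image contained in a small disk $\Delta\ni 2$, would be a \emph{proper} local homeomorphism $g\colon D\to\Delta$, hence a covering map; since $\Delta$ is simply connected this forces degree $1$, contradicting the fact that the boundary winding number (which equals the number of preimages of a value near $2$) is $N$. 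Equivalently, by Riemann--Hurwitz an $N$-sheeted branched cover of a disk by a disk needs $N-1$ branch points, so an unramified one is impossible. Your suggestion to exploit $N$-fold rotational symmetry makes matters worse rather than better: if $g(e^{2\pi i/N}z)=g(z)$ on a neighborhood of $0$ then $g$ cannot be locally injective at the origin at all.

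The paper's Section~\ref{proof3} circumvents this by \emph{not} using a single pulled-back function. It constructs a second quasiregular map $G_1$ whose boundary behavior is, roughly, that of $1/G_0$ but kept entire (so the resulting Bank--Laine function stays special), and defines $G$ to equal $G_0(z^N)$ in $2N-2$ of the sectors $\Sigma_j$ and $G_1(z^N)$ in the remaining two ($j=1$ and $j=2N$). Because $G_0$ and $G_1$ map the curves $\alpha_j$ to the two different intervals $[\varepsilon,e^{-\varepsilon}]$ and $[e^\varepsilon,1/\varepsilon]$ with opposite orientations, the boundary data on the finite region $\partial B$ is no longer a circle winding $N$ times around a single value; it instead makes excursions toward $0$, $1$ and $\infty$ in a pattern that matches the boundary values of a locally univalent map $\nu$ built by reflecting a conformal map across $2N-3$ sector sides (see Section~\ref{extension-bounded} and Figure~\ref{Delta}). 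That combinatorial compatibility, together with the quasisymmetric matching on $\partial B$, is what makes a locally univalent quasiregular extension exist; the remark at the end of Section~\ref{introproof3} explains that if one used $1/G_0$ in place of $G_1$ the gluing would still work but $E$ would lose the property of being special. Your proposal omits the $F_1$/$G_1$ ingredient entirely, which is the genuinely new idea needed beyond Theorem~\ref{thm1}, and without it the surgery step cannot be carried out.
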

An immediate corollary is the following result.
\begin{corollary}\label{cor2}
For every $\rho\in (1/2,\infty)$ there exists an entire function $A$ satisfying $\mu(A)=\rho(A)=\rho$
for which the equation~\eqref{w''+Aw} has two linearly independent solutions $w_1$ and $w_2$
such that $w_1$ has no zeros and $\lambda(w_2)<\infty$.
\end{corollary}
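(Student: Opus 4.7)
My plan is to reduce Corollary~\ref{cor2} to Theorems~\ref{thm1} and~\ref{thm3} together with the explicit family~\eqref{w12a}--\eqref{w12}, splitting into three cases according to the value of $\rho$.

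For $\rho\in(1/2,1)$ I invoke Theorem~\ref{thm1}: it supplies an entire function $A$ of order $\rho$ and two linearly independent solutions $w_1,w_2$ of \eqref{w''+Aw} such that $w_1$ is zero-free and $\lambda(w_1w_2)=\rho(w_1w_2)<\infty$, so in particular $\lambda(w_2)<\infty$. The equality $\mu(A)=\rho(A)$ for this $A$ is stated in the preceding discussion; alternatively it follows from~\eqref{asyAE1}, which forces $\log M(r,A)\sim 2r^\rho$. For non-integer $\rho>1$ I set $N=\lceil\rho\rceil$; since $\rho>1$ is not an integer we have $N\geq 2$ and $\rho\in(N-1,N)\subset(N/2,N)$, so Theorem~\ref{thm3} applies and supplies the required $A$, with $\mu(A)=\rho(A)=\rho$, a zero-free solution, and $\lambda(w_1w_2)<\infty$.

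The remaining case is $\rho=N$ a positive integer, which is not covered by either theorem, since each uses an open interval excluding the endpoints. Here I fall back on the explicit family \eqref{w12a}--\eqref{w12} with $p(z)=z^N$: both solutions given by \eqref{w12} are exponentials of entire functions and hence zero-free, so $\lambda(w_j)=0$; meanwhile in \eqref{w12a} the transcendental term $e^{2z^N}$ dominates the polynomial correction $(p')^2-2p''$, so that $\log M(r,A)\sim 2r^N$ and therefore $\mu(A)=\rho(A)=N$. Together the three cases exhaust $(1/2,\infty)$.

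The substantial work has already been done in Theorems~\ref{thm1} and~\ref{thm3}, so no genuine obstacle is anticipated; the only slightly subtle point is to notice that integer values of $\rho$ are not reached by either theorem and must be handled via the elementary family \eqref{w12a}--\eqref{w12}.
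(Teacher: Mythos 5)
Your proposal is correct, and it correctly identifies a point the paper glosses over when it calls Corollary~\ref{cor2} ``an immediate corollary'' of Theorem~\ref{thm3}: the union $\bigcup_{N\in\N}(N/2,N)$ equals $(1/2,\infty)\setminus\{1\}$, so $\rho=1$ really does require a separate argument, which you supply via~\eqref{w12a}--\eqref{w12}.

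However, your stated reason for the third case is off. You claim that every positive integer $\rho=N$ is ``not covered by either theorem, since each uses an open interval excluding the endpoints.'' That is not so: for an integer $\rho\geq 2$ you can apply Theorem~\ref{thm3} with the parameter $N$ there chosen as $\rho+1$, since $\rho\in\bigl((\rho+1)/2,\rho+1\bigr)$ whenever $\rho\geq 2$. Only $\rho=1$ is genuinely outside the reach of Theorems~\ref{thm1} and~\ref{thm3}. Your explicit treatment of all integers via $p(z)=z^N$ is nonetheless valid (both solutions in~\eqref{w12} are zero-free and $A$ has $\mu(A)=\rho(A)=N$ because $e^{2z^N}$ dominates the polynomial terms), so the proof stands; it is just more work than needed. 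A cleaner decomposition would be: $\rho\neq 1$ by Theorem~\ref{thm3} directly (taking $N=1$ when $\rho\in(1/2,1)$, or any $N$ with $N/2<\rho<N$ when $\rho>1$), and $\rho=1$ by the explicit family with $p(z)=z$.
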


\begin{ack}
We thank Jim Langley and Lasse Rempe--Gillen for helpful comments, and the referee for a large
number of corrections and suggestions, which improved the paper significantly.
\end{ack}

\section{Background and underlying ideas} \label{background}
\subsection{The Schwarzian derivative and linear differential equations}
To every differential equation~\eqref{w''+Aw} the associated Schwarzian differential equation
is given by 
\begin{equation}
\label{schwarz}
S(F):=
\left(\frac{F''}{F'}\right)'-\frac{1}{2}\left(\frac{F''}{F'}\right)^2
=
\frac{F'''}{F'}-\frac{3}{2}\left(\frac{F''}{F'}\right)^2=2A.
\end{equation}
The expression $S(F)$ is called the \emph{Schwarzian derivative} of~$F$.
The general solution of~\eqref{schwarz} is $F=w_2/w_1$ where $w_1$ and $w_2$
are linearly independent solutions of~\eqref{w''+Aw}. 
A normalized pair $(w_1,w_2)$ can be recovered from $F$ by the formulas
\[
w_1^2=\frac{1}{F'},\quad w_2^2=\frac{F^2}{F'},\quad\mbox{and}\quad E=w_1w_2=
\frac{F}{F'}.
\]
It follows that $F'=1/w_1^2$ is free of zeros,
and evidently all poles of $F$ are simple. So $F$ is a {\em locally univalent}
meromorphic function.  If $L$ is a linear-fractional transformation,
then $S(L\circ F)=S(F)$. On the set of all locally univalent meromorphic
functions we introduce an equivalence relation by saying that $F_1\sim F_2$ if
$F_1=L\circ F_2$ for some linear-fractional transformation~$L$.
Then the map $F\mapsto A=S(F)/2$ gives a bijection between the set
of equivalence classes of
locally univalent meromorphic functions and entire functions~$A$.

All these facts were known in the 19th century; see, e.g., Schwarz's collected
papers \cite[pp.\ 351--355]{Schwarz},
where he also discusses the work of Lagrange, Cayley, Riemann, Klein
and others in this context. For a modern exposition, see~\cite[Chapter~6]{Laine1993}
or~\cite[IV.2.2]{Ghys}.

We will need some facts about Bank--Laine functions.
First of all, we note that the Schwarzian $S(F)$
can be factored as $S(F)=B(F/F')/2$,
where 
\begin{equation}\label{Boperator}
B(E):=-2\frac{E''}{E}+\left(\frac{E'}{E}\right)^2-\frac{1}{E^2}.
\end{equation}
So the general solution of the differential equation $B(E)=4A$,
that is, of the equation
\begin{equation}\label{BL}
4A =-2\frac{E''}{E}+\left(\frac{E'}{E}\right)^2-\frac{1}{E^2},
\end{equation}
is a product of a normalized pair of solutions of~\eqref{w''+Aw}.
In particular, the general solution $E$ is entire when $A$ is entire,
which implies that the general solution
is free from movable singularities. 
The second order equations that are
linear with respect to the second derivative were classified by 
Painlev\'e and Gambier; see, e.g., Ince's book~\cite[Chapter XIV]{Ince}
for this classification.

The fact that the general solution of~\eqref{BL} is the product of a normalized pair
of solutions of~\eqref{w''+Aw}
seems to be due to Hermite (see~\cite{Hermite} or~\cite[p.~572]{WhittakerWatson}) and
can be found already in Julia's collection of exercises~\cite[Probl\`eme no.~33, pp.~193--201]{Julia}
and Kamke's reference book~\cite[Entry 6.139]{Kamke}.
However, the importance of this equation for the asymptotic study
of $A$ and $E$ was shown for the first time by Bank and Laine in~\cite{BL1,BL2}. 

We have seen that for every locally univalent function~$F$, the quotient
$F/F'$ is a Bank-Laine function, and all Bank--Laine
functions arise in this way. Zeros of a Bank--Laine function $E$ are
zeros and poles of~$F$, and $E$ is special if and only if $F$ is entire.

We keep the following permanent notation:
$(w_1,w_2)$ is a normalized pair of solutions
of~\eqref{w''+Aw}, the quotient $F=w_2/w_1$ is a solution of $S(F)=2A$, and
$E=w_1w_2=F/F'$ is a solution of~\eqref{BL}.

\subsection{Description of the method}
In this section we describe the underlying ideas
of our construction. The formal proofs of our results in sections \ref{proof1}--\ref{proof3}
are independent of this section.
On the other hand, an expert in conformal gluing
and the inverse problem of Nevanlinna theory may want to read only
this section.

Our approach consists of constructing $F$ by a geometric method (gluing),
and then recovering the needed asymptotic properties of $A$ and~$E$.
This idea was used for the first time by Nevanlinna \cite{Ne} and
Ahlfors \cite{A}. Nevanlinna solved a special case of the inverse
problem  of value distribution theory by using the asymptotic theory
of the differential equation~\eqref{w''+Aw} with polynomial coefficients.
Ahlfors showed that the same results
can be obtained by geometric methods, without appealing to the
differential equation~\eqref{w''+Aw}. These two papers initiated a long line
of research which culminated in the solution of the inverse problem
for functions of finite order with finitely many deficiencies
by Goldberg \cite{Gold,GO}. Further development of these ideas
led to the complete solution of the inverse problem
by Drasin~\cite{D}. The connection with differential equations
was not used in this research after \cite{Ne}.

Here we use this connection in the opposite direction to \cite{Ne}: we construct
a locally univalent map geometrically, use a form of the uniformization theorem
to obtain~$F$, and then derive asymptotic properties of $A$ and~$E$.

There is a subclass of locally univalent maps with especially
simple properties: there exists a finite set $X$ such that
\begin{equation}\label{S}
F\colon\C\backslash F^{-1}(X)\to\bC\backslash X
\end{equation}
is a covering.
The set of meromorphic functions with this property (not necessarily locally
univalent) is called the \emph{Speiser class} and denoted by $\SS$; it plays an important role in
holomorphic dynamics \cite{Bergweiler93,EL,Gol86} and in the general theory of
entire and meromorphic functions \cite{CER,Remp,GO}.
We only use the case when $X=\{0,1,\infty\}$.
To visualize functions of class $\SS$ one employs a classical tool, line complexes; 
see~\cite[Section 7.4]{GO} and~\cite[\S 11.2]{NevanlinnaEAF}.
Consider a graph $\Gamma_0$ embedded in the sphere $\bC$ with two vertices, $\times=i$
and $\circ=-i$, and three edges connecting the two vertices
and intersecting the real line exactly once, in the intervals
$(-\infty,0)$, $(0,1)$ and $(1,\infty)$, respectively. It is convenient
to make this graph symmetric with respect to the real line.

The preimage $\Gamma=F^{-1}(\Gamma_0)$ is called the \emph{line complex}.
Its vertices are labeled by $\times$ and $\circ$, and faces are labeled by $0,1,\infty$,
according to their images.
Two line complexes are equivalent if there is a homeomorphism of the plane
sending one to another, respecting the labels of vertices and faces.
In figures like
Figure~\ref{linecomplex} we draw one representative of the equivalence class,
usually not the true preimage of $\Gamma_0$ under~$F$.
The function $F$ is defined by the equivalence class of its line complex up to
an affine change of the independent variable.

As explained in the references given above, $\Gamma$ is a bi-partite connected embedded graph,
in which every vertex has degree~$3$. If $F$ is locally univalent, then
the faces (that is, the components of $\C\backslash\Gamma$) can be either
$2$-gons or $\infty$-gons. Here $2$-gons correspond to $a$-points of $F$ with
$a\in\{0,1,\infty\}$ and $\infty$-gons to logarithmic singularities of $F^{-1}$. 
Every bipartite connected graph with vertices of degree~$3$ embedded to the plane is
a line complex of some function $F$ of class $\SS$ with $X=\{0,1,\infty\}$, meromorphic
either in the unit disk or in the plane.

Our functions $F$ will correspond to the class of line complexes shown
in Figure~\ref{linecomplex}. They are parametrized by doubly infinite sequences of
non-negative integers $(\ell_k)_{k\in\Z}$,
showing the numbers of
$-\times\!\!=\!\!\circ-$ links on the vertical pieces between
the infinite horizontal branches. The faces are labeled by their images. 
Zeros of $F$ correspond to $2$-gons on the vertical part of the boundaries
of faces labeled $\infty$ and poles to the $2$-gons on the vertical parts
of the boundaries of faces labeled~$0$. So our function $F$ is entire
if and only if $\ell_k=0$ for all odd~$k$.
\begin{figure}[!htb]
\centering
\captionsetup{width=.85\textwidth}
\begin{tikzpicture}[scale=0.5,>=latex](-3,-14)(23,14)
\draw[-,dashed](17,0)->(23,0);
\draw[-,dashed](19,-5)->(19,-2.8);
\draw[-,dashed](19,-2)->(19,2);
\draw[-,dashed](19,2.8)->(19,5);
\draw[->](14,0)->(15.5,0);
        \node at (14.7,0.5) {$F$};
\filldraw (21.5,0) circle (0.12) node[above right]{1};
\filldraw (19,0) circle (0.12) node[above right]{0};
\draw (19,-2.5) circle (0.12);
\node at (19,2.5) {\tiny $\times$};
\node at (19.5,3) {$i$};
\node at (19.7,-2.8) {$-i$};
\draw [black] plot  [smooth,tension=1.2] coordinates {(19.1,-2.3) (20.25,0) (19.1,2.3) };
\draw [black] plot  [smooth,tension=1.2] coordinates {(19.2,-2.4) (22.75,0) (19.2,2.4) };
\draw [black] plot  [smooth,tension=1.2] coordinates {(18.9,-2.3) (17.75,0) (18.9,2.3) };
\foreach \y in {-12.5,-10.5,...,13.5}
        \draw (0,\y) circle (0.12);
\foreach \y in {-13.5,-11.5,...,12.5}
        \node at (0,\y) {\tiny $\times$};
\foreach \y in {-12.5,-11.5,-10.5,-8.5,-6.5,-5.5,-4.5,-2.5,-1.5,-0.5,0.5,1.5,3.5,4.5,5.5,7.5,9.5,10.5,11.5}
        \draw[-] (0,\y+0.2)->(0,\y+0.8);
\foreach \y in {-13.5,-9.5,-7.5,-3.5,2.5,6.5,8.5,12.5}
{
        \draw[-] (-0.06,\y+0.2)->(-0.06,\y+0.8);
        \draw[-] (0.06,\y+0.2)->(0.06,\y+0.8);
}
\foreach \x in {0,2,...,10}
\foreach \y in {-10.5,-4.5,-0.5,1.5,5.5,11.5}
{
        \draw (\x,\y) circle (0.12);
        \node at (\x+1,\y) {\tiny $\times$};
        \draw[-] (\x+0.2,\y)->(\x+0.8,\y);
        \draw[-] (\x+1.2,\y+0.06)->(\x+1.8,\y+0.06);
        \draw[-] (\x+1.2,\y-0.06)->(\x+1.8,\y-0.06);
}
\foreach \x in {0,2,...,10}
\foreach \y in {-11.5,-5.5,-1.5,0.5,4.5,10.5}
{
        \draw (\x+1,\y) circle (0.12);
        \node at (\x,\y) {\tiny $\times$};
        \draw[-] (\x+0.2,\y)->(\x+0.8,\y);
        \draw[-] (\x+1.2,\y+0.06)->(\x+1.8,\y+0.06);
        \draw[-] (\x+1.2,\y-0.06)->(\x+1.8,\y-0.06);
}
\foreach \y in {-11,-5,-1,1,5,11}
{
        \draw (6.5,\y) circle (0.30);
        \node at (6.5,\y) {\tiny $0$};
}
\foreach \y in {-13,-8,-3,0,3,8,13}
{
        \draw (6.5,\y) circle (0.30);
        \node at (6.5,\y) {\tiny $\infty$};
}
        \draw (-2.5,-8) circle (0.30);
        \node at (-2.5,-8) {\tiny $1$};
\draw[-,dashed](-3,0)->(6,0);
\draw[-,dashed](7,0)->(12.5,0);
\node at (-2.5,0.5)[]{$\R$};
\end{tikzpicture}
\caption{Sketch of the line complex corresponding to $\ell_0=\ell_{\pm1}=0$,
$\ell_{\pm 2}=1$, $\ell_{\pm 3}=0$, $\ell_{\pm 4}=2$, and $\ell_{\pm5}=0$.
The encircled labels indicate to which logarithmic
singularities the faces correspond. The graph $\Gamma_0$ is shown on the right.}
\label{linecomplex}
\end{figure}

\begin{example}\label{ex1}
$F(z)=e^{e^z}$ corresponds to $\ell_k=0$, $-\infty<k<\infty$.
\end{example}

\begin{example}\label{ex2}
$F(z)=P(e^z)e^{e^z}$, where $P$ is a polynomial of degree~$d$, such that $F$ is locally univalent,
corresponds to the line complex with $\ell_k=d$ when $k$ is even and $\ell_k=0$
when $k$ is odd. To see what this polynomial $P$ might be, we differentiate to obtain
$F'(z)=(P'(e^z)+P(e^z))e^ze^{e^z}$. This is free of zeros if and only if
$P'(w)+P(w)=cw^{d}$ for some $c\in\C\backslash\{0\}$. This easily implies that, up to a constant factor,
$$P(w)=\sum_{j=0}^{d} (-1)^j \frac{w^j}{j!},$$
a partial sum of $e^{-w}$.

We remark that a simple computation using~\eqref{schwarz} shows that the 
coefficient $A$ corresponding to $F$ is given by
\[
A(z)=-\frac14 e^{2z} -\frac{d}{2} e^z-\frac{(d+1)^2}{4} .
\]
The case that $A$ in \eqref{w''+Aw} has the form $A(z)=R(e^z)$ with a rational function $R$
has been thoroughly studied (see, e.g.,~\cite{BankLaine1983,ChiangIsmail}) but we shall 
not use these results.
\end{example}

\begin{example}\label{ex3}
$F(z)=R(e^z)e^{e^z}$, where $R$ is a rational function.  For $F$ to be locally univalent we need
$R'(w)+R(w)=c w^p/Q^2(w)$ where $Q$ is a polynomial with distinct roots and $c\in\C\backslash\{0\}$.
Then $R=P/Q$ where 
\[
P'(w)Q(w)-P(w)Q'(w)+P(w)Q(w)=cw^p.
\] Assuming $\deg P=m$, 
$\deg Q=n$ and $P(0)=Q(0)=1$ we conclude that $p=m+n$, and $P/Q$ is the
$(m,n)$-Pad\'e approximant to $e^{-z}$; see~\cite{BakerGravesMorris} for a thorough treatment of 
Pad\'e approximation, with a discussion of Pad\'e approximants to the exponential function
in~\cite[Section~1.2]{BakerGravesMorris}.
For this function $F$ we have $\ell_k=n$ when $k$ is even and $\ell_k=m$ when $k$ is odd.
\end{example}

In all these examples $F$ is periodic and the Bank--Laine
function $E=F/F'$ is of order~$1$. In Examples~\ref{ex1} and~\ref{ex2},
the Bank--Laine function is special.

To obtain different orders of~$E$,
we consider functions $F$ whose line complex has a non-periodic sequence $(\ell_k)$.
We restrict ourselves to entire functions $F$ and special
Bank--Laine functions~$E$, with $\ell_{2k+1}=0$,
\[
\ell_0=m_1+m_{-1},\quad \ell_{2k}=m_k+m_{k+1},\quad
\ell_{-2k}=m_{-k}+m_{-k-1},\quad k>0,
\]
where $(m_k)_{k\in\Z\backslash\{ 0\}}$ is a sequence of non-negative integers.

The construction of a function $F$ corresponding to such a line complex can be
visualized as follows.

Consider the strips 
$$\Pi_k=\{ x+iy\colon 2\pi(k-1)<y<2\pi k\}.$$
Let $\F_k$ be the bordered Riemann surface spread over the plane, which is the image
of this strip under the function
$$g_{m_k}(z)=P_{m_k}(e^z)e^{e^z},$$
where $P_{m_k}(w)$ is the partial sum of the Taylor series of $e^{-w}$
of degree $2m_k$, as in Example~\ref{ex2}.

All these Riemann surfaces have two boundary components which project
onto the ray $(1,+\infty)\subset\R$. We glue them together along these rays,
in the same order as the strips $\Pi_k$ are glued together in the plane.
The resulting Riemann surface $\F$ is open and simply connected.
Our function $F$ in Theorems~\ref{thm1} and~\ref{thm1a} is the conformal map from $\C$ to~$\F$.

Of course, the uniformization theorem by itself is not sufficient: one has
to know  that $\F$ is conformally equivalent to the plane.
Moreover, we need to know something about the asymptotic behavior
of $F$ to make conclusions about the order of $E=F/F'$ and $A=B(E)$.

So we do the following. Consider the piecewise analytic function $g$
defined by $g(z)=g_{m_k}(z)$ for $z\in\Pi_k$.
It follows from equation~\eqref{1a1} below that $g_{m_k}\colon \R\to (1,+\infty)$
is an increasing homeomorphism. Thus
each boundary component
of the strip $\Pi_k$ is mapped by $g$ homeomorphically onto the ray $(1,+\infty)$. 
We shall study the homeomorphisms $\phi_k\colon \R\to\R$
defined by $g_{m_{k+1}}(x) =g_{m_k}(\phi_k(x))$. We will
see that these homeomorphisms are close to the identity
on the positive real axis
and thus it is easy to glue the restrictions of these
functions to the half-strips $\{z\in\Pi_k\colon \re z>0\}$
to obtain a quasiregular map $U$ defined in the right
half-plane.  (For technical reasons we will actually use
the functions $u_{m_k}(z)=g_{m_{k}}(z+s_{m_k})$ instead
of $g_{m_{k}}(z)$, for certain constants $s_{m_k}$.)
The homeomorphisms $\phi_k$ are close to the linear
map $x\mapsto (2 m_{k+1}+1)x/(2m_k+1)$ on the negative
real axis. In the left half-plane we therefore consider
$v_{m_k}(z)=u_{m_k}(z/(2m_k+1))$ instead of $u_{m_k}$ and
find that it is easy to glue restrictions of these maps
to horizontal half-strips of width $2\pi(2m_k+1)$.
This way we obtain a quasiregular map $V$ defined in
the left half-plane. 

We then precompose the maps $U$ and
$V$ with appropriate powers and glue the resulting
functions to obtain a quasiregular map~$G$.
The dilatation $K_G$ of $G$ will satisfy the condition
\begin{equation}\label{TWB1}
\int_{|z|>1} \frac{K_G(z)-1}{x^2+y^2}dx\, dy<\infty.
\end{equation}
Then the existence theorem for quasiconformal mappings~\cite[\S V.1]{LV},
together with the theorem of  Teich\-m\"ul\-ler--Wit\-tich--Be\-linskii~\cite[\S V.6]{LV},
will guarantee the
existence of a homeomorphism $\tau\colon\C\to\C$ with 
\begin{equation}\label{TWB2}
\tau(z)\sim z\quad \text{as } z\to\infty,
\end{equation}
such that $G=F\circ\tau$ for some entire function~$F$.
The property~\eqref{TWB2} and explicit estimates for the $\phi_k$
will give sufficient control of the asymptotic
behavior of $F$ to prove the conclusions
of Theorems~\ref{thm1} and~\ref{thm1a}.

To achieve \eqref{TWB1} one needs a good control of the homeomorphisms $\phi_k$
as $x\to\infty$ and as $k\to\infty$. This is achieved by using
Szeg\H{o}-type asymptotics of the partial sums of the exponential which we prove in
section~3.

To prove Theorem~\ref{thm3}, we need a locally univalent function $F$ whose asymptotic
behavior is similar to $F_0(z^N)$, where $F_0$ is the locally univalent function
constructed in Theorem~\ref{thm1}. Of course, $F_0(z^N)$ has a critical point at $0$ 
and thus is not locally univalent.
So the idea is to proceed as follows. First we prepare a function $F_1$
which is similar to $F_0$, but maps the real line onto $(0,1)$ reversing the
orientation (so it is decreasing on the real line).
In fact, the construction would work with $F_1=1/F_0$, but then the resulting function $F$
would have poles so that the Bank--Laine function $E=F/F'$ would not be special.
Then we glue $F_0(z^N)$ and $F_1(z^N)$ in a suitable way.

To carry out the construction, we will actually work with the quasi\-regular maps 
$G_0$ and $G_1$ arising in the construction of $F_0$ and $F_1$, instead
of $F_0$ and $F_1$ themselves. 
Then we consider regions $C_j$, for $j=1,\dots,2N$, which are contained in
the sectors $\{ z\in\C\colon \pi (j-1)/N<\arg z<\pi j/N\}$ and which are
asymptotically close to these sectors; cf.\ Figure~\ref{C_j}.
\begin{figure}[!htb]
\captionsetup{width=.85\textwidth}
\centering
\begin{tikzpicture}[scale=0.55,>=latex](-10,-10)(-10,10)
\clip (-10,-10) rectangle (10,10);
\foreach \r in {120,240}
{
\filldraw[gray!10, rotate=\r, samples=150, domain=0:28] plot ({0.0+0.57735*\x+10/(\x+0.3)},{\x});
\filldraw[gray!10, rotate=\r, samples=150, domain=0:28] plot ({0.0+0.57735*\x+10/(\x+0.3)},{-\x});
}
\filldraw[gray!10, rotate=0, samples=150, domain=0:28] plot ({0.0+0.57735*\x+10/(\x+0.3)},{\x});
\filldraw[gray!10, rotate=0, samples=150, domain=0:28] plot ({0.0+0.57735*\x+10/(\x+0.3)},{-\x});
\foreach \r in {0,120,240}
{
\draw[black, rotate=\r, samples=150, domain=0:28] plot ({0.0+0.57735*\x+10/(\x+0.3)},{\x});
\draw[black, rotate=\r, samples=150, domain=0:28] plot ({0.0+0.57735*\x+10/(\x+0.3)},{-\x});
}
\draw[-](-10,0)->(10,0);
\draw[-](0,-10)->(0,10);
\draw[-,dashed](-5.7735,-10)->(5.7735,10);
\draw[-,dashed](5.7735,-10)->(-5.7735,10);
        \filldraw[white] (8,0) circle (0.80);
        \draw (8,0) circle (0.60);
        \node at (8,0) {\normalsize $\infty$};
        \filldraw[white] (-8,0) circle (0.80);
        \draw (-8,0) circle (0.60);
        \node at (-8,0) {\normalsize $1$};
        \filldraw[white] (0.57735*8,8) circle (0.80);
        \draw (0.57735*8,8) circle (0.60);
        \node at (0.57735*8,8) {\normalsize $1$};
        \filldraw[white] (0.57735*8,-8) circle (0.80);
        \draw (0.57735*8,-8) circle (0.60);
        \node at (0.57735*8,-8) {\normalsize $1$};
        \filldraw[white] (-0.57735*8,8) circle (0.80);
        \draw (-0.57735*8,8) circle (0.60);
        \node at (-0.57735*8,8) {\normalsize $0$};
        \filldraw[white] (-0.57735*8,-8) circle (0.80);
        \draw (-0.57735*8,-8) circle (0.60);
        \node at (-0.57735*8,-8) {\normalsize $0$};
        \node at (8,5) {\normalsize $C_1$};
        \node at (8,-5) {\normalsize $C_6$};
        \node at (-8,5) {\normalsize $C_3$};
        \node at (-8,-5) {\normalsize $C_4$};
        \node at (1,8) {\normalsize $C_2$};
        \node at (1,-8) {\normalsize $C_5$};
\end{tikzpicture}
\caption{Sketch of the domains $C_j$ for $N=3$.}
\label{C_j}
\end{figure}

For $z\in C_j$ we define $G(z)=G_k(\varphi_j(z^N))$ with some quasiconformal map~$\varphi$,
and $k\in\{0,1\}$ depending on~$j$.
In fact, we will have $G(z)=G_k(z^N)$ in a large subdomain $D_j$ of $C_j$.
Actually, we will take $k=1$ for $j=1$ and $j=2N$ and $k=0$ otherwise.

By construction, $G$ will map $\partial C_j$ 
homeomorphically onto one of the intervals $(0,1)$ and $(1,+\infty)$ of the real line.
It remains to define  a locally univalent quasi\-regular function in the complement of
$\bigcup_{j=1}^{2N} C_j$ which has these boundary values. In particular, this map
will tend to one of the values~$0$, $1$ and $\infty$ in the strips between the~$C_j$.
These values are encircled in Figure~\ref{C_j}.
In fact, the map will tend to $\infty$ in only one strip, and in the remaining strips
the limit will alternate between $1$ and~$0$.

The question whether a locally homeomorphic extension of $F$ to the complement 
of $\bigcup_{j=1}^{2N} C_j$ with these boundary values exists 
is a purely topological problem. This is solved in section~\ref{extension-bounded}.
However, we also need the extension to be quasi\-regular, with dilatation
satisfying~\eqref{TWB1}.
This is achieved by choosing an appropriate shape of the $C_j$ near infinity
in section~\ref{interpolation-infty}.  Composing the quasiregular map $G$ obtained 
with a quasiconformal map $\tau$ satisfying~\eqref{TWB2} will give our entire function~$F$.

We conclude this section with some general remarks and references.
Functions $f\in\SS$ are determined by their line complexes labeled
by the singular values up to an affine change of the independent
variable. It is an important problem to draw conclusions about asymptotic
properties of $f$ from the line complex. First of all one has to be
able to determine the conformal type of the Riemann surface defined
by the line complex \cite{Volk}. But once the type is determined, one
wants to know the asymptotic characteristics like the order of growth,
deficiencies, etc. Teich\-m\"ul\-ler \cite{Teich}
stated the general problem as follows:
\medskip

{\em Gegeben sei eine einfach zusammen\-h\"angende Riemannsche Fl\"a\-che
${\mathfrak{W}}$ \"uber der $w$-Kugel. Man kann sie bekannt\-lich
ein\-ein\-deutig und kon\-form auf den Ein\-heits\-kreis $|z|<1$, auf die punktierte
Ebene $z\neq\infty$ oder auf die volle $z$-Kugel abbilden,
so da\ss\ $w$ eine ein\-deutige Funktion von $z$ wird: $w=f(z)$. Die
Wert\-verteilung dieser ein\-deutigen Funktion ist zu unter\-suchen.

Dies ist ein Hauptproblem der modernen Funktionentheorie.}
\medskip

This problem has been intensively studied in connection
with the inverse problem of value distribution theory \cite{GO,Huk,Wit}.

In recent times there is a revival of interest in these questions, which is mainly stimulated
by questions of holomorphic dynamics.  In this connection,
we mention remarkable contributions by Bishop \cite{B1,B2,B3}.

\section{Proof of Theorem~\ref{thm1}}\label{proof1}
\subsection{Preliminary results}
\begin{lemma}\label{lemma1}
Let $\gamma>1$. Then there exists a sequence $(n_k)$ of odd positive integers, with $n_1=1$,
such that the function $h\colon [0,\infty)\to [0,\infty)$ which satisfies $h(0)=0$ and which
is linear on the intervals $[2\pi(k-1),2\pi k]$ and has slope $n_k$ there satisfies 
\begin{equation}\label{0a}
h(x)=x^{\gamma}+O(x^{\gamma-2})+O(1)
\end{equation}
as $x\to\infty$. 

Moreover, the function $g\colon [0,\infty)\to [0,\infty)$ defined by 
$h(g(x))=x^\gamma$ satisfies
\begin{equation}\label{0b}
g(x)=
x+O(x^{-1})+O(x^{1-\gamma})
\end{equation}
and
\begin{equation}\label{0c}
g'(x)=1+O(x^{-1})+O(x^{1-\gamma})
\end{equation}
as $x\to\infty$, where $g'$ denotes either the left or right derivative of~$g$.

Finally,
\begin{equation}\label{0d}
n_k=\gamma(2\pi k)^{\gamma-1}
+O(k^{\gamma-2})+O(1)
\end{equation}
as $k\to\infty$.
\end{lemma}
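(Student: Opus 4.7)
The plan is to view Lemma~\ref{lemma1} as a discrete approximation problem. Since $h(2\pi k)=2\pi N_k$ with $N_k:=\sum_{j=1}^k n_j$, the identity $h(x)\approx x^\gamma$ at $x=2\pi k$ translates into $N_k\approx T_k$, where $T_k:=(2\pi)^{\gamma-1}k^\gamma$. Because $n_1=1$ and each $n_k$ is odd, $N_k$ must have the parity of~$k$. I would therefore take $N_k$ (for $k\geq 2$) to be the integer of the correct parity nearest to~$T_k$, so that $|N_k-T_k|\leq 1$, and set $n_k:=N_k-N_{k-1}$. The differences are automatically odd, and for $k$ large they satisfy $n_k\to\infty$ (hence are positive), while the finitely many initial slopes can be adjusted separately to ensure positivity without affecting any asymptotic.

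The estimate~\eqref{0d} then follows from a Taylor expansion of $(k-1)^\gamma$: one finds $T_k-T_{k-1}=\gamma(2\pi k)^{\gamma-1}+O(k^{\gamma-2})$, which combined with $n_k=(T_k-T_{k-1})+O(1)$ gives the claim. For~\eqref{0a}, on each interval $[2\pi(k-1),2\pi k]$ the function $h$ is the linear interpolant joining the values $2\pi N_{k-1}$ and $2\pi N_k$, which differ from $(2\pi(k-1))^\gamma$ and $(2\pi k)^\gamma$ by $O(1)$; the linear interpolation error for the smooth function $x^\gamma$ on an interval of length $2\pi$ is bounded by a constant times the maximum of its second derivative, namely $O(k^{\gamma-2})=O(x^{\gamma-2})$. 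The two error sources combine to yield~\eqref{0a}.

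Finally, \eqref{0b} follows by substituting $y=g(x)$ into \eqref{0a}: after observing $y\asymp x$, the relation $y^\gamma=x^\gamma+O(x^{\gamma-2})+O(1)$ yields $y=x\bigl(1+O(x^{-2})+O(x^{-\gamma})\bigr)^{1/\gamma}=x+O(x^{-1})+O(x^{1-\gamma})$. For~\eqref{0c}, differentiating $h(g(x))=x^\gamma$ on each linear piece gives $g'(x)=\gamma x^{\gamma-1}/n_k$ whenever $g(x)\in(2\pi(k-1),2\pi k)$; the desired asymptotic then follows from~\eqref{0d} together with $x=2\pi k+O(1)$. The main delicate point of the argument is to keep the cumulative rounding error $O(1)$: one must fix the \emph{partial sums} $N_k$ near $T_k$ and read off the slopes by differencing, rather than rounding each slope $T_k-T_{k-1}$ independently, which would let errors accumulate linearly in~$k$ and destroy the $O(1)$ term in~\eqref{0a}.
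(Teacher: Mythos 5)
Your proposal is correct and follows essentially the same path as the paper's proof: the paper also fixes the partial sums (via $h(2\pi k)=2\pi p_k$ with $p_k$ the positive integer of the forced parity closest to $(2\pi k)^\gamma/(2\pi)$, so $|h(2\pi k)-(2\pi k)^\gamma|\leq 2\pi$) and reads off the odd slopes by differencing, then combines the $O(1)$ rounding error with the $O(k^{\gamma-2})$ linear-interpolation error to get~\eqref{0a}, and derives \eqref{0b}--\eqref{0d} by substitution exactly as you do. The only cosmetic difference is that the paper defines $p_k$ inductively (opposite parity to $p_{k-1}$, which forces parity $k$ once the initial segment $n_1=\dots=n_{k_0}=1$ is fixed) rather than directly as you do, and your closing remark correctly pinpoints the one genuinely delicate point of the lemma.
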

\begin{proof}
We set $h(0)=0$ and choose $k_0\in\N$ so that
\begin{equation}\label{zero}
(2\pi k)^\gamma-(2\pi(k-1))^\gamma\geq 4\pi\quad\text{for } k>k_0.
\end{equation}
Such a $k_0$ exists because $\gamma>1$.

For $k\leq k_0$ we set $n_k=1$. Suppose that $k>k_0$, and $h(2\pi(k-1))$ is already defined.
Then we define
$h(2\pi k):=2\pi p_k$, where $p_k$ is a positive integer of opposite parity to $h(2\pi(k-1))/(2\pi)$
minimizing $|(2\pi k)^\gamma-2\pi p_k|$. There are at most two such~$p_k$, and
when there are two, we choose the larger one. Then we interpolate $h$ linearly
between $2\pi(k-1)$ and $2\pi k$.  Evidently, with this definition, 
\begin{equation}
\label{one}
|h(2\pi k)-(2\pi k)^\gamma|\leq 2\pi.
\end{equation}
Next we show that $h$ is strictly increasing. Using~\eqref{zero} and~\eqref{one} we have 
$$h(2\pi k)\geq (2\pi k)^\gamma-2\pi>(2\pi(k-1))^\gamma+2\pi\geq h(2\pi(k-1)).$$
So $h$ is strictly increasing, and its slopes $n_k$ are positive odd integers.

To prove that the function $h$  
satisfies~\eqref{0a}, we note first that $h(2\pi k)=(2\pi k)^\gamma+O(1)$ by construction.
For $0\leq t\leq 1$ we have
\begin{align*}
& \;
(2\pi(k+t))^\gamma-t(2\pi(k+1))^\gamma-(1-t)(2\pi k)^\gamma
\\  = & \;
(2\pi k)^\gamma 
\left(1+\gamma t/k-t(1+\gamma/k)-1+t+O\!\left(k^{-2} 
\right)\right)=O(k^{\gamma-2})
\end{align*}
as $k\to\infty$, so the straight line connecting the points
$(2\pi k,(2\pi k)^\gamma)$ and $(2\pi(k+1),(2\pi(k+1))^\gamma)$
deviates from the graph 
of the function $x\mapsto x^\gamma$ between the points $k$ and $k+1$
by a term which is  $O(k^{\gamma-2})$.
This yields~\eqref{0a}.

To prove~\eqref{0b} we note that~\eqref{0a} implies that
$g(x)=x(1+\delta(x))$ where $\delta(x)\to 0$. Using~\eqref{0a} again we see that
\begin{align*}
x^\gamma&=h(g(x))
=x^\gamma(1+\delta(x))^\gamma +O(x^{\gamma-2})+O(1)
\\ &
=x^\gamma\left(1+ \gamma\delta(x)+O(\delta(x)^2)\right)  +O(x^{\gamma-2})+O(1)
\end{align*}
as $x\to\infty$.
This yields 
$$x^\gamma \delta(x)=O(x^{\gamma-2})+O(1),$$
from which~\eqref{0b} follows.

Similarly we see that 
$$h'(x)=\gamma x^{\gamma-1} +O(x^{\gamma-2})+O(1) $$
and thus
$$h'(g(x))=\gamma x^{\gamma-1} \left(1+\delta(x)\right)^{\gamma-1}+O(x^{\gamma-2})+O(1) 
=\gamma x^{\gamma-1} +O(x^{\gamma-2})+O(1).$$
Hence
\begin{equation}\label{g'x}
g'(x)=\frac{\gamma x^{\gamma-1}}{h'(g(x))}
=\frac{1}{1+O(x^{-1})+O(x^{1-\gamma})}
=1+O(x^{-1})+O(x^{1-\gamma}),
\end{equation}
which is~\eqref{0c}.

Finally, by construction we have
\[
n_k=\frac{h(2\pi k)-h(2\pi (k-1))}{2\pi}=
\frac{(2\pi k)^\gamma-(2\pi(k-1))^{\gamma}}{2\pi}+O(1),
\]
from which~\eqref{0d} easily follows.
\end{proof}

For $m\geq 0$ we will consider the Taylor polynomial
$$P_m(z)=\sum_{k=0}^{2m}(-1)^k \frac{z^k}{k!}$$
of $e^{-z}$ and the functions
$$h_m(z)=P_m(z)e^z$$
and
$$g_m(z)=h_m(e^z)=P_m(e^z)e^{e^z}.$$
Noting that 
\begin{equation}\label{1a1}
g_m'(z)=\frac{1}{(2m)!}\exp\!\left( e^z+(2m+1)z\right)\neq 0
\end{equation}
we see that $g_m$ is an increasing homeomorphism from $\R$ onto $(1,\infty)$.

We shall need some information about the asymptotic behavior of $h_m$ and~$g_m$.
\begin{lemma}\label{lemma1a}
Let $m\in\N$ and put $n=2m+1$.  Let $y\geq 0$.  Then 
\[
\log(h_m(y)-1)=-\log n! +y+n\log y-\log\!\left(1+\frac{y}{n}\right)+R(y,n)
\]
where
\[
|R(y,n)|\leq\frac{24y}{n(n+y)}
\]
for $n\geq 24$.
\end{lemma}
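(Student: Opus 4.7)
The plan is to exhibit an integral representation of $h_m(y) - 1$ whose leading behavior is transparent, and then control the residual by a one-line integration-by-parts identity.

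The first step is the telescoping observation $P_m(y) + P_m'(y) = y^{2m}/(2m)! = y^{n-1}/(n-1)!$ (obtained by matching consecutive terms). Combined with $h_m(0) = 1$ and $h_m' = (P_m + P_m')e^y$ this yields
\[
h_m(y) - 1 = \frac{1}{(n-1)!} \int_0^y t^{n-1} e^t \, dt.
\]
Substituting $t = y(1-w)$ recasts this as $h_m(y) - 1 = (y^n e^y/n!)\, Q$, where
\[
Q := n \int_0^1 (1-w)^{n-1} e^{-yw} \, dw.
\]
Taking logarithms reduces the lemma to proving $|\log J| \leq 24y/(n(n+y))$ for $J := (n+y) Q/n$.

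The crucial step is the exact identity $J = 1 + y I_1$, with $I_1 := \int_0^1 w(1-w)^{n-1} e^{-yw} \, dw$. This follows from the computation
\[
-\frac{d}{dw}\!\bigl[(1-w)^n e^{-yw}\bigr] = (1-w)^{n-1} e^{-yw}\bigl(n + y - yw\bigr):
\]
integrating from $0$ to $1$ gives $1$ on the left by evaluation at the endpoints, while the right side splits as $(n+y)\int_0^1(1-w)^{n-1} e^{-yw}\,dw - yI_1 = J - yI_1$. Since $I_1 \geq 0$ this already forces $J \geq 1$, and so $R := \log J \geq 0$.

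It remains to bound $I_1$. The inequality $1 - w \leq e^{-w}$ on $[0,1]$ gives $(1-w)^{n-1} \leq e^{-(n-1)w}$, and extending the domain to $[0, \infty)$ yields
\[
I_1 \leq \int_0^\infty w \, e^{-(n-1+y)w} \, dw = \frac{1}{(n-1+y)^2}.
\]
For $n \geq 2$ we have $n-1 \geq n/2$, hence $n - 1 + y \geq (n+y)/2$ and $yI_1 \leq 4y/(n+y)^2 \leq 4y/(n(n+y))$. Applying $\log(1+x) \leq x$ yields $0 \leq R \leq y I_1 \leq 4y/(n(n+y)) \leq 24y/(n(n+y))$ (in fact the hypothesis $n \geq 24$ is not strictly needed for the bound, just for a comfortable constant).

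The main obstacle is locating the identity $J = 1 + yI_1$. The natural attempts -- Taylor expanding $e^{-yw}$, rescaling $w = s/(n+y)$ and applying Laplace's method, or using Jensen on the Beta density -- all pick up spurious factors of $e^{O(y)}$ in the regime $y \gg n$ and so fail to give a bound uniform in $y$. The integration-by-parts identity sidesteps these difficulties by rewriting $J - 1$ as a single positive beta-type integral that is straightforward to estimate.
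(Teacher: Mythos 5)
Your argument is correct, and it takes a genuinely different (and cleaner) route to the remainder bound than the paper does. The paper arrives at the same integral representation -- after an integration by parts one gets $h_m(y)-1=\tfrac{e^y y^n}{n!}(1-yI)$ with $I=\int_0^1 e^{y(s-1)}s^n\,ds$, which is your $Q$ after the substitution $s=1-w$ -- but then estimates $I$ by comparing $e^{y(s-1)}$ to $s^y$, splitting the defect integral at the point $1-\delta$ with $\delta=\min\{1/2,1/\sqrt{y}\}$ and bounding the two pieces separately, ultimately obtaining $|I-1/(y+n)|\leq 12/(y+n)^2$ and then $|R|\leq 24y/(n(y+n))$ under the side condition $n\geq 24$ (needed to invoke $|\log(1+t)|\leq 2|t|$ for $|t|\leq 1/2$). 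Your approach replaces all of that case analysis by the exact identity $J=1+yI_1$ coming from integrating $-\tfrac{d}{dw}[(1-w)^n e^{-yw}]$, which collapses the estimate to a single positive integral. This buys you strictly more: you get $R\geq 0$ (the paper's method only gives two-sided control), you only need $n\geq 2$, and the resulting constant is $4$ rather than $24$. The only things worth noting: your identity for $J$ is precisely the statement $I\leq 1/(n+y)$, which together with the paper's lower bound $I\geq 1/(n+y+1)$ pins $I$ to an interval of length $1/((n+y)(n+y+1))$ -- tighter than the paper's $O(1/(n+y)^2)$ window but of the same order, which is why both proofs give the same asymptotic quality of bound. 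All steps, including the telescoping identity $P_m+P_m'=y^{n-1}/(n-1)!$ and the extension of the $I_1$ integral to $[0,\infty)$, are correct.
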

The slightly weaker result that $R(y,n)=O(1/n)$, uniformly in~$y$, can be obtained
from the work of 
Kriecherbauer, Kuijlaars,  McLaughlin and Miller~\cite{Kriecherbauer}.
The above approximation is better for small~$y$, which is 
advantageous for our purposes.

In terms of $g_m$ Lemma~\ref{lemma1a} takes the form
\begin{equation}\label{1b}
\log(g_m(x)-1)=-\log n! +e^x+nx-\log\!\left(1+\frac{e^x}{n}\right)+R(e^x,n).
\end{equation}
\begin{proof}[Proof of Lemma~\ref{lemma1a}]
By the formula for the error term of a Taylor series
we have
$$e^{-y}-P_m(y)=-\frac{1}{(2m)!}\int_0^y e^{-t}(y-t)^{2m}dt$$
and thus
\begin{align*}
h_m(y)-1
&=\frac{1}{(2m)!}\int_0^y e^{y-t}(y-t)^{2m}dt
=\frac{1}{(2m)!}\int_0^y e^{u}u^{2m}du\\
&=\frac{1}{(2m+1)!}\left(e^yy^{2m+1}-\int_0^y e^{u}u^{2m+1}du\right)\\
&=\frac{1}{(2m+1)!}e^yy^{2m+1}\left(1-\int_0^y e^{u-y}\left(\frac{u}{y}\right)^{2m+1}du\right)\\
&=\frac{1}{(2m+1)!}e^yy^{2m+1}\left(1-y\int_0^1 e^{y(s-1)}s^{2m+1}ds\right).
\end{align*}
Since $n=2m+1$ we thus have
\begin{equation}\label{1b3}
\log(h_m(y)-1)=-\log n! +y +n\log y+\log\!\left( 1-yI\right),
\end{equation}
where
\[
I=I(y,n)=\int_0^1 e^{y(s-1)}s^{n}ds.
\]
Since $\log s\leq s-1$ for $s>0$ we have
\begin{equation}\label{1b1}
I\geq \int_0^1 e^{y\log s}s^{n}ds
=\int_0^1 s^{y+n}ds=\frac{1}{y+n+1}.
\end{equation}
For an estimate in the opposite direction we use that 
\begin{equation}\label{log}
\log s\geq s-1-(s-1)^2\quad\text{for }  s\geq 1/2.
\end{equation}
With $\delta=\min\{1/2,1/\sqrt{y}\}$ 
and $\eta=1-\delta$ we write
\begin{align*}
I-\frac{1}{y+n+1}
&= \int_0^1 e^{y(s-1)}s^{n}ds
-\int_0^1 e^{y\log s}s^{n}ds
\\ &
= \int_0^1 \left(e^{y(s-1)}s^{n} - e^{y\log s}s^{n}\right)\!ds\\
&= \int_0^{\eta} \left(e^{y(s-1)}s^{n} - e^{y\log s}s^{n}\right)\!ds
+ \int_{\eta}^1 \left(e^{y(s-1)}s^{n} - e^{y\log s}s^{n}\right)\!ds\\
&= I_1 +I_2.
\end{align*}
Now  \eqref{log} yields
\[
I_2
=
\int_{\eta}^1 e^{y\log s} s^n\left(e^{y(s-1)-y\log s} - 1\right)\!ds
\leq
\int_{\eta}^1 s^{y+n}\left(e^{y(s-1)^2} - 1\right)\!ds
\]
since $\delta\leq 1/2$ and thus $s\geq 1/2$  if $s\geq\eta= 1-\delta$.
Since $\delta\leq 1/\sqrt{y}$ we have $y(s-1)^2 \leq 1$ for $s\geq 1-\delta$.
Noting that $e^t-1\leq 2t$ for $0\leq t\leq 1$ we obtain
\begin{align*}
I_2
&\leq 2 \int_{\eta}^1 s^{y+n}y(s-1)^2ds
\leq 2 \int_0^1 s^{y+n}y(s-1)^2ds\\
&=\frac{4y}{(y+n+1)(y+n+2)(y+n+3)}
\leq \frac{4}{(y+n)^2}.
\end{align*}
Moreover,
\begin{align*}
I_1
&\leq \int_0^{1-\delta} e^{y(s-1)}s^{n}ds
= \int_0^{1-\delta} e^{y(s-1)+n\log s}ds
\\ &
\leq \int_0^{1-\delta} e^{(y+n)(s-1)}ds
=\frac{1}{y+n} \left( e^{-\delta(y+n)}-e^{-(y+n)}\right)
\leq\frac{1}{y+n} e^{-\delta(y+n)}.
\end{align*}
Since $\delta=\min\{1/2,1/\sqrt{y}\}\geq 1/(2\sqrt{y+n})$ and since $e^t\geq t^2/2$ and
hence $e^{-t}\leq 2/t^2$ for $t\geq 0$ this yields
\[
I_1\leq \frac{1}{y+n} e^{-\sqrt{y+n}/2}\leq  \frac{8}{(y+n)^2}.
\]
Combining the bounds for $I_1$ and $I_2$ with~\eqref{1b1} we obtain
\[
 \frac{1}{y+n+1}  \leq I\leq \frac{1}{y+n+1} +\frac{12}{(y+n)^2}.
\]
Since 
\[
 \frac{1}{y+n} - \frac{1}{y+n+1}  = \frac{1}{(y+n)(y+n+1)}  \leq \frac{1}{(y+n)^2}
\]
we obtain
\[
 \left| I-\frac{1}{y+n}  \right| \leq \frac{12}{(y+n)^2}.
\]
Combining this with~\eqref{1b3} we find that 
\[
\log(h_m(y)-1)=-\log n! +y +n\log y+\log\!\left( \frac{n}{y+n} -r(y,n)\right),
\]
where
\[
|r(y,n)| \leq \frac{12y}{(y+n)^2}.
\]
Since 
\begin{align*}
\log\!\left( \frac{n}{y+n} -r(y,n)\right)
= -\log\!\left(1+\frac{y}{n}\right) +\log\!\left( 1-\frac{(y+n)r(y,n)}{n}\right)
\end{align*}
we see that $\log(h_m(x)-1)$ has the form given with 
\[
R(y,n)= \log\!\left( 1-\frac{(y+n)r(y,n)}{n}\right).
\]
To prove the estimate for $R(y,n)$ that was stated in the conclusion we note that
$|\log(1+t)|\leq 2|t|$ for $|t|\leq 1/2$ and
\[
\left|\frac{(y+n)r(y,n)}{n}\right|\leq 
 \frac{12y}{n(y+n)}\leq \frac{12}{n}\leq \frac12
\]
for $n\geq 24$. It follows that
\[
|R(y,n)|=\left|\log\!\left( 1-\frac{(y+n)r(y,n)}{n}\right)\right|\leq 
2\left|\frac{(y+n)r(y,n)}{n}\right|\leq  \frac{24y}{n(y+n)}
\]
for $n\geq 24$ as claimed.
\end{proof}
Recall that 
by~\eqref{1a1} the function
$g_m\colon \R\to (1,\infty)$ is an increasing homeomorphism.
Thus there exists 
$s_m\in\R$ such that 
\begin{equation}\label{s_m}
g_m(s_m)=2.
\end{equation}
\begin{lemma}\label{lemma2}
Let $r_0=-1.27846454\dots$ be the unique real solution of the equation
$e^{r_0}+r_0+1=0$.
Then
\[
s_m=\log n +r_0 -\frac{1}{2r_0}\frac{\log n}{n} +O\!\left(\frac{1}{n}\right)
\] 
as $m\to\infty$, with $n=2m+1$.
\end{lemma}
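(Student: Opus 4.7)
The plan is to convert the defining equation $g_m(s_m)=2$ into a transcendental equation for $s_m$ via Lemma~\ref{lemma1a}, then solve it asymptotically. Since $h_m(e^{s_m})=g_m(s_m)=2$ implies $\log(h_m(e^{s_m})-1)=0$, applying Lemma~\ref{lemma1a} with $y=e^{s_m}$ gives
\[
\log n! = e^{s_m} + n s_m - \log\!\left(1+\frac{e^{s_m}}{n}\right) + R(e^{s_m},n).
\]
Setting $s_m=\log n + t_m$, so that $e^{s_m}=ne^{t_m}$, and invoking Stirling's formula $\log n! = n\log n - n + \tfrac12\log(2\pi n) + O(1/n)$, the $n\log n$ terms cancel and rearrangement yields the reduced equation
\[
n\bigl(e^{t_m}+t_m+1\bigr) = \tfrac12\log(2\pi n) + \log(1+e^{t_m}) - R(ne^{t_m},n) + O(1/n).
\]

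I would first show that $(t_m)$ is bounded. Indeed, if $t_m\to +\infty$ along a subsequence, the left side grows like $ne^{t_m}$ while the right side is $O(\log n + t_m)$; if $t_m\to -\infty$, the left side behaves like $n(t_m+1)$ while the right side remains $O(\log n)$. In either case the two sides cannot match for large~$n$, so $(t_m)$ is bounded. The explicit bound in Lemma~\ref{lemma1a} then gives $R(ne^{t_m},n)=O(1/n)$, and dividing the reduced equation by $n$ yields
\[
e^{t_m}+t_m+1 = O\!\left(\frac{\log n}{n}\right).
\]
Since $t\mapsto e^t+t+1$ is strictly increasing on $\R$ with unique zero $r_0$ and derivative $e^{r_0}+1=-r_0>0$ there, this forces $t_m\to r_0$, and in fact $t_m=r_0+O(\log n/n)$.

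To finish, write $t_m=r_0+\epsilon_m$ with $\epsilon_m=O(\log n/n)$. A Taylor expansion using $e^{r_0}+r_0+1=0$ (hence $e^{r_0}+1=-r_0$) gives
\[
e^{t_m}+t_m+1 = -r_0\,\epsilon_m + O(\epsilon_m^2),
\]
and substitution into the reduced equation, using $\log(1+e^{t_m})=O(1)$, $R=O(1/n)$, and $n\epsilon_m^2=O((\log n)^2/n)=O(1)$, produces
\[
-n r_0\,\epsilon_m = \tfrac12\log n + O(1).
\]
Dividing by $-n r_0$ yields $\epsilon_m = -\log n/(2 r_0 n) + O(1/n)$, which is the claimed expansion for $s_m = \log n + r_0 + \epsilon_m$.

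The main technical obstacle is the bookkeeping of three independent error sources—the Stirling remainder, the bound $|R(y,n)|\le 24y/(n(n+y))$ from Lemma~\ref{lemma1a}, and the Taylor remainder in $\epsilon_m$—to ensure that none of them contaminates the $(\log n)/n$ correction or spoils the final $O(1/n)$ accuracy. In particular one must verify at each stage that $R$ really is $O(1/n)$ uniformly (which relies on $t_m$ being bounded) and that the quadratic remainder $n\epsilon_m^2$ falls into $O(1)$ once the first-pass bound $\epsilon_m=O(\log n/n)$ is available.
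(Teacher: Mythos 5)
Your proposal is correct and follows essentially the same strategy as the paper: apply Lemma~\ref{lemma1a} at $y=e^{s_m}$ where the left side vanishes, write $s_m=\log n+t_m$, use Stirling to cancel the $n\log n$ terms, conclude $t_m\to r_0$, and then bootstrap once more to extract the $-(\log n)/(2r_0 n)$ correction. The one place where you are more cautious than necessary is in first establishing boundedness of $t_m$ before controlling $R$: the estimate $|R(y,n)|\le 24y/(n(n+y))$ from Lemma~\ref{lemma1a} already gives $R(ne^{t},n)\le 24/(n(e^{-t}+1))\le 24/n$ \emph{uniformly} in $t$, so the $O(1/n)$ bound on $R$ holds without any a priori localization of $t_m$; the paper uses this silently. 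That said, your separate boundedness argument (growth of the two sides is incompatible as $t_m\to\pm\infty$) is sound and makes the step from the transcendental equation to $t_m\to r_0$ fully explicit, which the paper leaves to the reader.
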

\begin{proof}
We write $s_m=\log n +r$. Then~\eqref{1b} and Stirling's formula yield
\begin{align*}
0&=-\log n! +ne^r+n\log n+nr-\log\!\left(1+e^r\right)
+O\!\left(\frac{1}{n}\right)
\\ &
= n(e^r+r+1)- \log\!\left(1+e^r\right)
-\frac12\log n -\frac12 \log 2\pi 
+O\!\left(\frac{1}{n}\right).
\end{align*}
This implies that $r=r_0+o(1)$  as $m\to\infty$. We write $r=r_0+t$
so that $t=o(1)$ as $m\to\infty$.
We obtain 
\begin{equation}\label{x1}
0= -nr_0 t -\frac12 \log n +O(1)+O(nt^2).
\end{equation}
This first yields that 
\[
r_0 t= -\frac12 \frac{\log n}{n} +O\!\left(\frac{1}{n}\right)+O(t^2)
\]
and hence 
\[
t\sim -\frac{1}{2r_0} \frac{\log n}{n}.
\]
Once this is known, \eqref{x1} actually gives
\[
t= -\frac{1}{2r_0} \frac{\log n}{n} +O\!\left(\frac{1}{n}\right)+O\!\left(\left(\frac{\log n}{n}\right)^2\right)
= -\frac{1}{2r_0} \frac{\log n}{n} +O\!\left(\frac{1}{n}\right),
\]
from which the conclusion immediately follows.
\end{proof}
Let now $m,M\in\N$ with $M>m$. 
We consider the function $\phi\colon\R\to\R$ defined by
\[
g_M(x)=g_m(\phi(x)).
\]
We will consider the functions $\phi$ for the case that $m=m_{k}$ and $M=m_{k+1}$
for the sequence $(m_k)$ such that $n_k=2m_k+1$, where
$(n_k)$ was constructed
in Lemma~\ref{lemma1}.

Thus we will consider the behavior of $\phi$ as $m\to\infty$, but in order
to simplify the formulas we suppress the dependence of $\phi$ from $m$ and $M$ 
from the notation. We shall assume that there exists a constant $C>1$ 
such that $M\leq Cm$.  We put $n=2m+1$ and $N=2M+1$.  Then clearly
\begin{equation}\label{NCn}
N\leq Cn.
\end{equation}
In the following, the constants appearing in the Landau notation $O(1/n)$ and $O(1)$ 
will depend on~$C$, but not on other variables.
\begin{lemma}\label{lemma1c}
The function $\phi$ has a unique fixed point $p$ which satisfies
\begin{equation}\label{1p}
p= \log n+ \frac{N\log \frac{N}{n}}{N-n} -1 
+O\!\left(\frac{1}{n}\right)
\end{equation}
as $m\to\infty$. Moreover, $\phi(x)<x$ for $x<p$ and $\phi(x)>x$ for $x>p$.  
\end{lemma}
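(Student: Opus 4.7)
The plan is to reduce the entire lemma to a study of the single auxiliary function $f(x)=g_M(x)-g_m(x)$. Since $g_m,g_M\colon\R\to(1,\infty)$ are strictly increasing homeomorphisms, so is $\phi=g_m^{-1}\circ g_M$, and by monotonicity of $g_m$ the inequality $\phi(x)\gtrless x$ is equivalent to $f(x)\gtrless 0$. Hence everything reduces to showing that $f$ has a unique zero $p$ satisfying the claimed asymptotic, and that this zero is where $f$ passes from negative to positive.

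For existence and uniqueness I would compute $f'$ explicitly from~\eqref{1a1}:
$$f'(x)=e^{nx+e^x}\!\left(\frac{e^{(N-n)x}}{(2M)!}-\frac{1}{(2m)!}\right),$$
which has exactly one zero; so $f$ has a unique critical point, a minimum. The easy expansions $h_m(y)-1\sim y^n/n!$ as $y\to 0^+$ and $h_m(y)\sim y^{2m}e^y/(2m)!$ as $y\to+\infty$ yield $f(x)\to 0^-$ as $x\to-\infty$ (since $(g_M-1)/(g_m-1)=(n!/N!)\,e^{(N-n)x}\to 0$) and $f(x)\to+\infty$ as $x\to+\infty$. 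Combined with the unique minimum, this forces $f$ to have exactly one zero $p$, with the asserted sign pattern.

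For the asymptotic location of $p$, I would apply Lemma~\ref{lemma1a} to the identity $\log(g_M(p)-1)=\log(g_m(p)-1)$, cancel the common $e^p$, and obtain
$$(N-n)p=\log(N!/n!)+\log\frac{1+e^p/N}{1+e^p/n}+R(e^p,n)-R(e^p,N).$$
The bound $|R(y,n)|\leq 24y/(n(n+y))\leq 24/n$ from Lemma~\ref{lemma1a} makes the $R$-contribution $O(1/n)$ unconditionally. Applying Stirling's formula $\log(N!/n!)=N\log N-n\log n-(N-n)+\tfrac{1}{2}\log(N/n)+O(1/n)$, writing $p=\log n+c$ and $t=N/n\in(1,C]$, and collecting terms turns the above into
$$(N-n)c=N\log t-(N-n)+\tfrac{1}{2}\log t+\log\frac{1+e^c/t}{1+e^c}+O(1/n).$$

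The main obstacle is that simply dividing this by $N-n$ would give an error $O(1/(N-n))$ rather than $O(1/n)$, which is unacceptable when $N-n$ is small (e.g., $N=n+2$). The gain comes from the identity $t-1=(N-n)/n$: using $\log(1+u)\leq u$ gives $\tfrac{1}{2}\log t\leq (N-n)/(2n)$, and writing $\log(1+e^c/t)-\log(1+e^c)=-\int_{e^c/t}^{e^c}\!du'/(1+u')$ gives
$$\left|\log\frac{1+e^c/t}{1+e^c}\right|\leq\frac{e^c(t-1)}{t+e^c}\leq t-1=\frac{N-n}{n}.$$
Division by $N-n$ now yields $O(1/n)$ from each piece, so $c=\frac{N\log(N/n)}{N-n}-1+O(1/n)$, which is the formula for $p=\log n+c$ in the statement. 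The argument is self-consistent: the leading term $\frac{t\log t}{t-1}-1$ is bounded on $(1,C]$, so $c$ stays bounded and $e^c=O(1)$ uniformly, justifying a posteriori the estimates of the $R$-terms.
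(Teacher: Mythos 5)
Your proof is correct and follows essentially the same route as the paper: factor the problem into (a) showing $d(x)=g_M(x)-g_m(x)$ has a unique zero by computing $d'$ and examining the limiting behavior at $\pm\infty$, and (b) locating the zero by combining Lemma~\ref{lemma1a} with Stirling's formula in the fixed-point identity $g_M(p)=g_m(p)$. The only differences are cosmetic: you bound the logarithmic correction term via an integral representation $\bigl|\log\tfrac{1+e^c/t}{1+e^c}\bigr|\leq t-1$, where the paper rewrites it as $\log\bigl(1-\tfrac{N-n}{N}\tfrac{e^r}{1+e^r}\bigr)$ and uses monotonicity of $a\mapsto|\log(1-a)|$; and you derive the sign of $d$ near $-\infty$ from the elementary expansion $h_m(y)-1\sim y^n/n!$ rather than from~\eqref{1b}. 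Both yield the same crucial bound of $(N-n)/n$ on the troublesome terms, which is what rescues the $O(1/n)$ error after dividing by $N-n$ — you correctly identified this as the main point. Your closing remark about $e^c=O(1)$ being a consistency check rather than an input is accurate: none of the bounds used actually require $c$ to be bounded a priori, so there is no circularity.
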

\begin{proof}
Let $d\colon \R\to \R$, $d(x)=g_M(x)-g_m(x)$.  We have to show that $d$ has a unique zero.
Now
$$d'(x)=\left(\frac{1}{(2M)!}e^{2(M-m)x}-\frac{1}{(2m)!}\right)e^{(2m+1)x}e^{e^x},$$
from which we see that the derivative has one sign change, namely
at the point 
$$q=\frac{1}{2(M-m)}\log\frac{(2M)!}{(2m)!}.$$
Moreover, it follows from~\eqref{1b} that $d(x)<0$ if $x$ is negative and of 
sufficiently large modulus.  So $d$ decreases and stays negative on the left of~$q$,
and then increases to $+\infty$ on the right of~$q$.
We conclude that $d$ has exactly one zero $p>q$, which is the fixed point of $\phi$.

To determine the asymptotic behavior of $p$ as $m\to\infty$ we note that~\eqref{1b}
implies that
\[
-\log N! +Np-\log\!\left(1+\frac{e^p}{N}\right)
=-\log n! +np-\log\!\left(1+\frac{e^p}{n}\right)+O\!\left(\frac{1}{n}\right).
\]
We write $p=\log n+r$.
It follows that
\begin{align*}
& \; (N-n)r  \\
=& \; \log\frac{N!}{n!}
-(N-n)\log n
+\log\!\left(1+\frac{n}{N}e^r\right)
-\log\!\left(1+e^r\right)
+O\!\left(\frac{1}{n}\right)\\
=& \; \log\frac{N!}{n!}
-(N-n)\log n
+\log\!\left(1-\frac{N-n}{N}\frac{e^r}{1+e^r}\right)
+O\!\left(\frac{1}{n}\right).
\end{align*}
Now
\[
\begin{aligned}
\left|\log\!\left(1-\frac{N-n}{N}\frac{e^r}{1+e^r}\right)\right|
& \leq 
\left|\log\!\left(1-\frac{N-n}{N}\right)\right|
=\left|\log\frac{n}{N}\right|
\\ &
=\log\frac{N}{n}
=\log\!\left(1+\frac{N-n}{n}\right)
\leq 
\frac{N-n}{n}.
\end{aligned}
\]
Hence 
\[
r=
\frac{1}{N-n} \log\frac{N!}{n!} -\log n
+O\!\left(\frac{1}{n}\right).
\]
An application of Stirling's formula now yields 
\[
r=
\frac{N\log \frac{N}{n}}{N-n} -1 + 
\frac{\frac12 \log \frac{N}{n}}{N-n}  
+O\!\left(\frac{1}{n}\right).
\]
Since 
\[
\frac{\log \frac{N}{n}}{N-n}  
=
\frac{\log \frac{N}{n}}{n\left(\frac{N}{n}-1\right)}  \leq \frac{1}{n}
\]
we actually have 
\[
r=
\frac{N\log \frac{N}{n}}{N-n} -1 
+O\!\left(\frac{1}{n}\right),
\]
so that~\eqref{1p} follows.
\end{proof}
\begin{remark}
We can write the asymptotic formula for $r$ also in the form
\[
r=
\frac{\frac{N}{n}\log \frac{N}{n}}{\frac{N}{n}-1} -1 
+O\!\left(\frac{1}{n}\right).
\]
Since the function $x\mapsto (x\log x)/(x-1)$ is increasing on the 
interval $(1,C]$ and since $\lim_{x\to 1} (x\log x)/(x-1)=1$
we deduce that
\begin{equation}\label{x2}
p\geq  \log n -O\!\left(\frac{1}{n}\right)
\end{equation}
and
\begin{equation}\label{x3}
p\leq  \log n +  \frac{C\log C}{C-1} -1 
+O\!\left(\frac{1}{n}\right)
=\log n+O(1).
\end{equation}
If $N/n\to 1$, as it will be the case
in our application, we even have $p=\log n+o(1)$.
\end{remark}
\begin{lemma}\label{lemma1d}
For $m,M,n,N\in\N$ and $\phi\colon\R\to\R$ with the unique fixed point $p$ as 
before
there exist positive constants $c_1,\dots,c_8$, depending only on
the constant $C$ in~\eqref{NCn}, such that
\begin{equation}\label{1e}
0<\phi(x)-x\leq c_1 e^{-x/2} \leq \frac{c_1}{n}
\quad\text{for } 
x>8 \log n
\end{equation}
and
\begin{equation}\label{1f}
\left|\phi(x)-\frac{N}{n} x+\frac{1}{n}\log\frac{N!}{n!} \right| \leq c_2 e^{x}
\quad\text{for } x<p.
\end{equation}
Moreover, 
with $s_m$ defined by~\eqref{s_m} we have 
\begin{equation}\label{1g}
|\phi(x)-x|\leq c_3
\quad\text{for } 
x>s_m
\end{equation}
and
\begin{equation}\label{1h}
\left|\phi(x)-\frac{N}{n} x-\frac{1}{n}\log\frac{N!}{n!} \right| \leq c_4
\quad\text{for } 
x<p.
\end{equation}
Finally,
\begin{equation}\label{1h2}
\left|\phi'(x) -1\right| \leq c_5 \left(e^{-x/2}+\frac{1}{n}\right)
\leq \frac{2c_5}{n}
\quad\text{for } x>8\log n
\end{equation}
and
\begin{equation}\label{1h3}
\left|\phi'(x) -\frac{N}{n}\right| \leq 
\frac{c_6 e^{x}}{n}
\quad\text{for } x< p ,
\end{equation}
as well as
\begin{equation}\label{1h1}
c_7\leq \left|\phi'(x) \right| \leq c_8
\quad\text{for  all } x\in\R.
\end{equation}
\end{lemma}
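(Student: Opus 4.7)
The plan is to reduce all seven estimates to two basic identities. Applying Lemma~\ref{lemma1a} in the form~\eqref{1b} to both sides of the defining relation $g_M(x)=g_m(\phi(x))$ and subtracting yields the implicit equation
\begin{equation*}
e^{\phi(x)}-e^x+n\bigl(\phi(x)-x\bigr)=(N-n)x-\log\frac{N!}{n!}+E(x),
\end{equation*}
where the correction term
\begin{equation*}
E(x)=\log\frac{1+e^{\phi(x)}/n}{1+e^x/N}+R(e^x,N)-R(e^{\phi(x)},n)
\end{equation*}
is uniformly bounded. Differentiating $g_M(x)=g_m(\phi(x))$ and using the explicit formula~\eqref{1a1} for $g_m'$ gives
\begin{equation*}
\phi'(x)=\frac{(n-1)!}{(N-1)!}\exp\!\bigl(e^x-e^{\phi(x)}+(N-n)x-n(\phi(x)-x)\bigr);
\end{equation*}
substituting the implicit equation into this exponent eliminates the $(N-n)x$ term, and combining with the factorial ratio yields the clean identity
\begin{equation*}
\phi'(x)=\frac{N}{n}\exp\!\bigl(-E(x)\bigr).
\end{equation*}

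In the large-$x$ regime $x>8\log n$ (so $e^x\geq n^8$), set $\delta=\phi(x)-x$, which is positive by Lemma~\ref{lemma1c}. Stirling bounds the right-hand side of the implicit equation by $O(nx)$, while the left-hand side is at least $e^x\delta$. Hence $\delta\leq \alpha nxe^{-x}$ for some $\alpha$ depending only on~$C$, and since $nxe^{-x/2}$ is decreasing on $x\geq 2$ and bounded at $x=8\log n$, this yields $\delta\leq c_1 e^{-x/2}\leq c_1/n$, which is~\eqref{1e}. A direct expansion shows $E(x)=\log(N/n)+\delta+O(1/n)$ in this regime, so the clean formula for $\phi'$ gives $\phi'(x)=e^{-\delta+O(1/n)}=1+O(e^{-x/2}+1/n)$, establishing~\eqref{1h2}.

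In the small-$x$ regime $x<p$, Lemma~\ref{lemma1c} and the remark following it give $\phi(x)<x<p=\log n+O(1)$, so both $e^x$ and $e^{\phi(x)}$ are $O(n)$. Solving the implicit equation for $\phi(x)$ and dividing by~$n$ gives
\begin{equation*}
\phi(x)=\frac{N}{n}x-\frac{1}{n}\log\frac{N!}{n!}+\frac{1}{n}\bigl(e^x-e^{\phi(x)}\bigr)+\frac{E(x)}{n},
\end{equation*}
which yields~\eqref{1f} (sharp as $x\to-\infty$) and~\eqref{1h} (the coarser uniform bound on $(-\infty,p)$). A Taylor expansion of the logarithm and of the $R$-terms in $E$ shows $E(x)=O(e^x/N)$ as $x\to-\infty$, so~\eqref{1h3} follows from the clean formula for $\phi'$. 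The remaining uniform estimates~\eqref{1g} and~\eqref{1h1} come from joining the two one-sided regimes with continuity: evaluating $\phi'(x)=(N/n)e^{-E(x)}$ at $x=p$ and using~\eqref{1p} together with Stirling gives $\phi'(p)=\sqrt{N/n}+O(1/n)$, which matches both one-sided regimes and pins down positive two-sided bounds for~$\phi'$, after which $|\phi(x)-x|$ is controlled by integrating $\phi'-1$ over bounded ranges.

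The main obstacle is keeping the constants $c_1,\ldots,c_8$ uniform in $m$ and $M$ (depending only on~$C$ from~\eqref{NCn}) through the transitional range around $x=p$, where neither $e^x$ nor the linear part of the implicit equation dominates and $E(x)$ is genuinely of order~$1$ rather than~$1/n$. The cancellation that produces $\phi'(p)\approx\sqrt{N/n}$ relies on delicate matching between the Stirling expansion of $\log(N!/n!)$, the formula~\eqref{1p} for~$p$, and the remainder estimate $|R(y,n)|\leq 24y/(n(n+y))$ from Lemma~\ref{lemma1a}; tracking this matching is what ultimately forces all constants in the lemma to depend only on~$C$.
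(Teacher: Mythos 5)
The core algebraic observation in your plan --- rewriting the relation $g_M=g_m\circ\phi$ via Lemma~\ref{lemma1a} as an implicit equation, and the ``clean'' identity $\phi'(x)=(N/n)\,e^{-E(x)}$ obtained by substituting that implicit equation into the quotient $g_M'(x)/g_m'(\phi(x))$ --- is correct and pleasant, and it reproduces the paper's treatment of~\eqref{1f},~\eqref{1h},~\eqref{1h3} (where $e^x,e^{\phi(x)}=O(n)$ for $x<p$, so $E(x)=O(e^x/n)$), and of~\eqref{1e},~\eqref{1h2} (where the expansion $E(x)=\delta+\log(N/n)+O(1/n)$ is justified because $n/e^{\phi(x)}$ and $N/e^x$ are tiny once $x>8\log n$). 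The paper's proof proceeds in substantially the same way for these five estimates, working with the two Szeg\H{o}-type forms~\eqref{1i} and~\eqref{1i2} of the identity.

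However, your treatment of~\eqref{1g} and~\eqref{1h1} has a genuine gap. You propose to ``join the two one-sided regimes with continuity'' and control $|\phi(x)-x|$ by ``integrating $\phi'-1$ over bounded ranges,'' but the transitional range $p<x<8\log n$ has width comparable to $7\log n$, which is unbounded; evaluating $\phi'$ at the single point $x=p$ tells you nothing about $E(x)$ across that range, and integrating a merely bounded $\phi'-1$ over it would at best yield $|\phi(x)-x|=O(\log n)$, not the uniform constant bound $c_3$. There is also a circularity to worry about: bounding $E(x)$ in this range requires bounding $\delta=\phi(x)-x$ (since $E(x)=\delta+\log(N/n)+\log(1+n/e^{\phi(x)})-\log(1+N/e^x)+O(1/n)$), which is precisely what~\eqref{1g} asserts. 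The paper breaks this by passing to the second form~\eqref{1i2} of Lemma~\ref{lemma1a}, substituting $x=\log n+s$ and $y=\log n+t$, and showing via Stirling that $e^t-e^s\leq Cs+O(1)$ uniformly for $x>p$; since $t>s\geq -O(1/n)$ this gives $t-s\leq e^{t-s}-1=(e^t-e^s)e^{-s}\leq Cse^{-s}+O(e^{-s})=O(1)$, which is~\eqref{1j} and from which~\eqref{1g} and the boundedness of $E$ (hence~\eqref{1h1}) follow. Your sketch contains no substitute for this step. As a minor point, the identity $\phi'(p)=\sqrt{N/n}+O(1/n)$ you invoke is not correct in general: at the fixed point one gets $\phi'(p)=(N/n+e^r)/(1+e^r)\cdot(1+O(1/n))$ with $r$ as in~\eqref{1p}, which agrees with $\sqrt{N/n}$ only to first order as $N/n\to1$, not uniformly for $N/n\in[1,C]$ --- but this does not affect the real issue, which is that pointwise information at $x=p$ cannot replace the uniform estimate across $(p,8\log n)$.
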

\begin{remark}
Since $r_0<0$ it follows from Lemma~\ref{lemma2} and~\eqref{x2} that $s_m<p$ for large~$m$. 
Thus we may assume that the constants $c_2,c_4,c_6$ are 
chosen such that~\eqref{1f}, \eqref{1h} and~\eqref{1h3} also hold for $x<s_m$.
\end{remark}
\begin{proof}[Proof of Lemma~\ref{lemma1d}]
Let $y=\phi(x)$ so that $g_M(x)=g_m(y)$.
By~\eqref{1b} we have
\begin{equation} \label{1i}
\begin{aligned}
&\; -\log N! +e^x+Nx-\log\!\left(1+\frac{e^x}{N}\right)+R(e^x,N) \\
=& \;
-\log n! +e^y+ny-\log\!\left(1+\frac{e^y}{n}\right)+R(e^y,n).
\end{aligned}
\end{equation}
Suppose first that $x<p$, with $p$ as in Lemma~\ref{lemma1c}.
Then $y<x$ and thus
\[
\begin{aligned}
 & \;
\left| y-\frac{N}{n}x +\frac{1}{n} \log\frac{N!}{n!} \right|
\\ = & \;
\frac{1}{n}
\left|e^x-e^y
-\log\!\left(1+\frac{e^x}{N}\right)+R(e^x,N)
+\log\!\left(1+\frac{e^y}{n}\right)-R(e^y,n)
\right|
\\ \leq & \;
\frac{1}{n}
\left(e^x+e^y
+\frac{e^x}{N}
+\frac{24e^x}{N^2}
+\frac{e^y}{n}
+\frac{24e^y}{n^2}
\right)
\leq 
\frac{52e^x}{n}.
\end{aligned}
\]
This yields~\eqref{1f} and, since $p=\log n+O(1)$ by Lemma~\ref{lemma1c}, also~\eqref{1h}.

Suppose now that $x>p$.
Using 
\[
\log\!\left(1+\frac{e^x}{N}\right)
=x-\log N +\log\!\left(1+\frac{N}{e^x}\right)
\]
and the  corresponding formula for $\log (1+e^y/n)$ we may write~\eqref{1i} in the 
form
\begin{equation} \label{1i2}
\begin{aligned}
&\; -\log N! +e^x+(N-1)x+\log N -\log\!\left(1+\frac{N}{e^x}\right)+R(e^x,N) \\
=& \;
-\log n! +e^y+(n-1)y+\log n - \log\!\left(1+\frac{n}{e^y}\right)+R(e^y,n).
\end{aligned}
\end{equation}
We write $x=\log n+s$ and $y=\log n +t$ and note that, since $x>p$, we have $t>s\geq -O(1/n)$ by~\eqref{x2}.
We obtain
\[
\begin{aligned}
 &
-\log N! +ne^s+(N-1)(\log n +s)+\log N -\log\!\left(1+\frac{N}{ne^s}\right) \\
=&
-\log n! +n e^t+(n-1)(\log n+t)+\log n - \log\!\left(1+\frac{1}{e^t}\right)
+O\!\left(\frac{1}{n}\right)
\end{aligned}
\]
and hence, using Stirling's formula,
\begin{align*}
 e^t-e^s
&= \frac{1}{n}\left( -\log\frac{N!}{n!}+(N-n)\log n+\log\frac{N}{n}
\right)
\\ & 
\qquad +\left(\frac{N}{n}-\frac{1}{n}\right)s-\left(1-\frac{1}{n}\right)t
+O\!\left(\frac{1}{n}\right)
\\  & 
\leq \frac{N}{n}s +O(1)
\leq  Cs +O(1).
\end{align*}
It follows that
\begin{equation} \label{1j}
0<\phi(x)-x=t-s\leq e^{t-s}-1\leq Cse^{-s}+O(e^{-s})=O(1) \quad\text{for } x>p.
\end{equation}
For $x>8\log n$ we have $s=x-\log n>3x/4$ and thus
$2s/3>x/2$. 
It follows that
$se^{-s}=O(e^{-2s/3})=O(e^{-x/2})$ and thus~\eqref{1j} yields~\eqref{1e}.
Noting that $p-s_m=O(1)$ by Lemmas~\ref{lemma2} and~\ref{lemma1c} we can also 
deduce~\eqref{1g} 
from~\eqref{1j}.

Since $g_M'(x)=g_m'(y)\phi'(x)$ by the chain rule, \eqref{1a1} yields
\[
\phi'(x)=\frac{(n-1)!}{(N-1)!}\exp\!\left( e^x+Nx-e^y-ny\right)
\]
and thus
\[
\log\phi'(x)=
\log n!-\log N!-\log n+\log N +e^x+Nx-e^y-ny.
\]
Using~\eqref{1i2} and~\eqref{1j} we obtain
\[
\log\phi'(x)
=y-x+\log\!\left(1+\frac{N}{e^x}\right)- \log\!\left(1+\frac{n}{e^y}\right)
+O\!\left(\frac{1}{n}\right)
\]
for  $x>p$. 
Since $y>x>p\geq \log n-O(1/n)$ by~\eqref{x2} we have 
\[
\frac{n}{e^y}\leq \frac{N}{e^x}\leq (1+o(1))\frac{N}{n} \leq C+o(1).
\]
Together with~\eqref{1e} and~\eqref{1g},
and since for all $A>0$ there exists $B>0$ such that $|t-1|\leq B|\log t|$ whenever $|\log t|\leq A$, 
the same arguments as the ones used before now yield~\eqref{1h2},
as well as 
\begin{equation} \label{1k}
|\log \phi'(x)|=O(1) \quad \text{for } x>p.
\end{equation}

Similarly~\eqref{1i} yields
\[
\log\phi'(x)
=\log \frac{N}{n} +\log\!\left(1+\frac{e^x}{N}\right)
-\log\!\left(1+\frac{e^y}{n}\right)
-R(e^x,N)+R(e^y,n)
\]
and hence
\[
\log\!\left(\frac{n}{N}\phi'(x)\right)
=O\!\left(\frac{e^{x}}{n}\right)
 \quad \text{for } x<p.
\]
This yields~\eqref{1h3} as well as $|\log \phi'(x)|=O(1)$ for $x<p$ which
together with~\eqref{1k} yields~\eqref{1h1}.
\end{proof}

\subsection{Definition of a quasiregular map}\label{defG}
The idea is to construct an entire function by gluing functions $g_m$ with different values
of $m$ appropriately.
Actually, we will first modify the functions $g_m$ slightly to obtain closely related
functions $u_m$ and $v_m$.
We then glue restrictions of these maps to half-strips along horizontal lines to
obtain quasiregular maps $U$ and $V$ which are defined in the right
and left half-plane. Then we will glue these functions along the imaginary axis to
obtain a quasiregular map $G$ in the whole plane.

In the next section we will show that
the map constructed satisfies the hypothesis of the  Teich\-m\"ul\-ler--Wit\-tich--Be\-linskii theorem.

The maps $U$, $V$ and $G$ will commute with complex conjugation,
so it will be enough to define them in the upper half-plane.
We begin by constructing the map~$U$.

Instead of $g_m$ we consider the map
\[
u_m\colon \{z\in\C\colon \re z \geq 0\}\to\C, \quad
u_m(z)=g_m(z+s_m).
\]
Note that $u_m$ is increasing on the real line, and
maps $[0,\infty)$ onto $[2,\infty)$.

Let $(n_k)$ be the sequence from Lemma~\ref{lemma1} and write $n_k=2m_k+1$.
Basically, we would like to put $U(z)=u_{m_k}(z)$ in the half-strip
$$\Pi_k^+=\{ x+iy: x>0, 2\pi(k-1)<y<2\pi k\}.$$
However, this function $U$ will be discontinuous.
In order to obtain a continuous function  we consider the function
$\psi_k\colon [0,\infty)\to [0,\infty)$ defined by $u_{m_{k+1}}(x)=u_{m_k}(\psi_k(x))$.
This function $\psi_k$ is closely related to the functions $\phi$ considered in
Lemmas~\ref{lemma1c} and Lemma~\ref{lemma1d}.
In fact, denote by $\phi_k$ the function $\phi$ corresponding to $m=m_k$ and $M=m_{k+1}$.
Then 
\begin{equation} \label{2a}
\psi_k(x)
= \phi_k(x+s_{m_{k+1}})-s_{m_k} .
\end{equation}

We then define $U\colon \{z\in\C\colon \re z \geq 0\}\to\C$ by interpolating
between $u_{m_{k+1}}$ and $u_{m_k}$ as follows.
If $2\pi (k-1)\leq y < 2\pi k$, say $y=2\pi (k-1)+2\pi t$ where $0\leq t< 1$,
then we put
$$U(x+iy)=u_{m_k}((1-t)x+t\psi_k(x)+iy)=u_{m_k}(x+iy+t(\psi_k(x)-x)).$$
Actually, by $2\pi i$-periodicity we have
$$U(x+iy)=u_{m_k}((1-t)x+t\psi_k(x)+2\pi i t).$$
The function $U$ defined this way is continuous in the right half-plane.

We now define a function $V$ in the left half-plane.
In order to do so, 
we define
\[
v_m\colon \{z\in\C\colon \re z \leq 0\}\to\C, \quad
v_m(z)=g_m\!\left(\frac{z}{2m+1}+s_m\right).
\]
Note that $v_m$ maps $(-\infty,0]$ monotonically onto $(1,2]$.

Let $(n_k)$ and $(m_k)$ be as before and put $N_k=\sum_{j=1}^{k}n_j$,
with $N_0=0$.
This time we would like to define
$V(z)=v_{m_k}(z)$ in the half-strip 
$$\{ x+iy: x\leq 0,\; 2 \pi N_{k-1} \leq y < 2 \pi N_k\},$$
but again this function would be
discontinuous, so in order
to obtain a continuous function we again interpolate between $v_{m_{k+1}}$ and $v_{m_k}$.
Similarly as before we consider the map $\psi_k\colon (-\infty,0]\to (-\infty,0]$ defined by
$v_{m_{k+1}}(x)=v_{m_k}(\psi_k(x))$.
Then we define $V\colon \{z\in\C\colon \re z \leq 0\}\to\C$ by interpolating
between $v_{m_{k+1}}$ and $v_{m_k}$ as follows:
if $2\pi N_{k-1}\leq y < 2\pi N_k$, say $y=2\pi N_{k-1}+2\pi n_k t$ where $0\leq t< 1$,
then we put
$$V(x+iy)=v_{m_k}((1-t)x+t\psi_k(x)+iy)=v_{m_k}(x+iy+t(\psi_k(x)-x)).$$
This map $V$ is continuous in the left half-plane.

Now we define our map $G$ by gluing $U$ and $V$ along the imaginary axis.
In order to do this we note that by construction we have $U(iy)=V(ih(y))$
and thus $U(ig(y))=V(iy^\gamma)$ for $y\geq 0$,
with the maps $h$ and $g$ from Lemma~\ref{lemma1}.
Therefore we will consider a
homeomorphism $Q$ of the right half-plane 
\[
H^+=\{z\in\C\colon\re z\geq 0\}
\]
onto itself,
satisfying $Q(\overline{z})=\overline{Q(z)}$,
such that
$Q(\pm iy)=\pm i g(y)$ for $y\geq 0$ while $Q(z)=z$ for $\re z\geq 1$.
We thus have to define $Q(z)$ in the strip $\{z\in\C\colon 0<\re z<1\}$.
For $|\im z|\geq 1$ we define $Q$ in this strip by interpolation; that is, we put
\begin{equation}\label{Q}
Q(x\pm iy) =
x\pm i((1-x) g(y)+xy)       \quad\text{if } 0< x< 1 \text { and } y\geq 1.
\end{equation}
In the remaining part of the strip we define $Q$ by
\begin{equation}\label{Q1}
Q(z) =
\begin{cases}
z |z|^{\gamma-1}         & \text{if } 0< |z|\leq 1,\\[1mm]
z                   & \text{if } |z|> 1, \text{ but } 0< \re z< 1 \text{ and } 0\leq |\im z|\leq  1.
\end{cases}
\end{equation}
Note that $h(y)=y$ for $0\leq y\leq 2\pi$ since $n_1=1$, and thus $g(y)=y^\gamma$ for $0\leq y\leq 1$.
Thus $Q(\pm iy)=\pm i g(y)$ also for $|y|\leq 1$. Moreover, we have $g(1)=1$, meaning that
the above expressions for $Q(z)$ do indeed coincide for $|\im z|=1$.

We conclude that the map $W=U\circ Q$ satisfies
\[
W(\pm iy)=U(\pm ig(y))=V(\pm iy^\gamma)
\quad\text{for }y\geq 0.
\]

Let  now $\rho\in (1/2,1)$ 
as in the hypothesis of Theorem~\ref{thm1}.
We choose 
$\gamma=1/(2\rho-1)$
in the above construction and put
$\sigma=\rho\gamma=\rho/(2\rho-1)$.
The hypothesis that $1/2<\rho<1$
corresponds to 
$\gamma>1$
as well as $\sigma>1$.
The map
$$
G(z) =
\begin{cases}
W(z^{\rho})              & \text{if } |\arg z|\leq \displaystyle \frac{\pi}{2\rho} ,\\[3mm]
V(-(-z)^{\sigma})      & \text{if } |\arg(-z)|\leq \displaystyle \frac{\pi}{2\sigma},
\end{cases}
$$
is continuous in $\C$.
Here, for $\eta>0$, we denote by $z^\eta$ the principal branch of the power which is defined in
$\C\backslash (-\infty,0]$.

\subsection{Estimation of the dilatation} \label{est-dil}
We will use the Teich\-m\"ul\-ler--Wit\-tich--Be\-linskii theorem stated in section 2.2
to show
that the map $G$ defined in the previous section has the form
$G(z)=F(\tau(z))$ with an entire function $F$
and a homeomorphism $\tau$ satisfying
$\tau(z)\sim z$ as $z\to\infty$.

For a quasiregular map~$f$, let
\[
\mu_f(z)=\frac{f_{\overline{z}}(z)}{f_z(z)}
\quad\text{and}\quad
K_f(z)=\frac{1+|\mu_f(z)|}{1-|\mu_f(z)|} .
\]
In order to apply the Teich\-m\"ul\-ler--Wit\-tich--Be\-linskii theorem, 
we have to estimate $K_G(z)-1$.
We note that
\[
K_G(z)-1
= \frac{2|\mu_G(z)|}{1-|\mu_G(z)|}
= \frac{2|\mu_G(z)|(1+|\mu_G(z)|)}{1-|\mu_G(z)|^2}
\leq \frac{4|\mu_G(z)|)}{1-|\mu_G(z)|^2}.
\]
We begin by estimating $K_U(z)-1$.
Let 
$2\pi (k-1)\leq y < 2\pi k$ so that $y=2\pi (k-1)+2\pi t$ where 
\[
0\leq t = \frac{y}{2\pi}-(k-1)< 1.
\]
Then $U(z)=u_{m_k}(q(z))$ where 
\begin{equation} \label{defq}
\begin{aligned}
q(x+iy)
&=x+iy+ t(\psi_k(x)-x) 
\\ &
=x+iy+ \left(\frac{y}{2\pi}-(k-1)\right)(\psi_k(x)-x).
\end{aligned}
\end{equation}
Thus
\begin{equation} \label{x4}
q_z(z)=1+a(z)-i b(z)
\quad\text{and}\quad
q_{\overline{z}}(z)=a(z)+i b(z)
\end{equation}
with
\begin{equation} \label{x4a}
a(x+iy)=\frac{t}{2}(\psi_k'(x)-1) 
\quad\text{and}\quad
b(x+iy)=\frac{1}{4\pi}(\psi_k(x)-x).
\end{equation}
Note that if $a(z)<0$, then
\[
a(z)=\frac{t}{2}(\psi_k'(x)-1) \geq \frac12 (\psi_k'(x)-1)
\]
and thus 
\[
1+2a(z)\geq \psi_k'(x) >0.
\]
Thus we have 
\[
1+2a(z)\geq \min\{1,\psi_k'(x)\} >0
\quad\text{as well as}\quad
1+a(z) >0
\]
in any case.

We deduce from~\eqref{x4} that
\[
|\mu_U(z)|^2=
|\mu_q(z)|^2=\frac{a(z)^2+b(z)^2}{(1+a(z))^2+b(z)^2}
\]
and thus
\begin{align*}
K_U(z)-1
&\leq \frac{4|\mu_q(z)|}{1-|\mu_q(z)|^2} \\
&=\frac{4\sqrt{(1+a(z))^2 +b(z)^2}\sqrt{a(z)^2+b(z)^2}}{1+2a(z)} \\
&\leq\frac{4( 1+a(z) +|b(z)|)(|a(z)|+|b(z)|)}{1+2a(z)}.
\end{align*}
Altogether we find that
\[
K_U(z)-1
\leq 
\frac{4( 1+|\psi_k'(x)-1| +|\psi_k(x)-x|)(|\psi_k'(x)-1|+|\psi_k(x)-x|)}{ \min\{1,\psi_k'(x)\}}.
\]
With 
\begin{equation}\label{3a1}
r(x)=|\psi_k'(x)-1| +|\psi_k(x)-x|
\end{equation}
 we thus have
\begin{equation}\label{3a}
K_U(z)-1
\leq 
\frac{4( 1+r(x))r(x)}{ \min\{1,\psi_k'(x)\}}.
\end{equation}
We shall use Lemma~\ref{lemma1d} to estimate the terms occurring here.

Let now 
\[
S_k=\{x+iy\colon 2\pi (k-1)<y<2\pi k,\; x>\max\{1, 8\log n_k\}\}.
\]
In order to estimate $K_U(z)-1$ for $z\in S_k$ we note that
Lemma~\ref{lemma2} yields that if $z=x+iy\in S_k$, then $x+s_{m_{k+1}}>8\log n_k$
for large~$k$.
For such $k$ we deduce from~\eqref{1e}, \eqref{2a} and Lemma~\ref{lemma2} that
\begin{equation}\label{x9}
\begin{aligned}
|\psi_k(x)-x| 
&=
|\phi_k(x+s_{m_{k+1}})-(x+s_{m_{k+1}})+s_{m_{k+1}}-s_{m_{k}}| 
\\ &
\leq \frac{c_1}{n_k} +\left| \log n_{k+1} -\log n_{k}\right|
+\frac{1}{2|r_0|}\left| \frac{\log n_{k+1}}{n_{k+1}} 
- \frac{\log n_k}{n_k} \right|.
\end{aligned}
\end{equation}
With $\delta=\min\{1,\gamma-1\}$ we deduce from~\eqref{0d} that
\begin{equation}\label{x5}
n_k= \left(1+O\!\left(\frac{1}{k^\delta}\right)\right)\gamma(2\pi k)^{\gamma-1}
\end{equation}
and hence
\begin{equation}\label{x6}
\frac{n_{k}}{n_{k+1}}
=\left(1+O\!\left(\frac{1}{k^\delta}\right)\right) \left(\frac{k}{k+1}\right)^{\gamma-1} 
=1+O\!\left(\frac{1}{k^\delta}\right).
\end{equation}
It follows that
\begin{equation}\label{x7}
\log n_{k+1} -\log n_{k}=\log\frac{n_{k+1}}{n_k}=O\!\left(\frac{1}{k^\delta}\right)
\end{equation}
as well as
\begin{equation}\label{x8}
\frac{\log n_{k+1}}{n_{k+1}} - \frac{\log n_k}{n_k}
=\left( \frac{n_{k}}{n_{k+1}}-1\right) \frac{\log n_{k+1}}{n_{k}}
+\frac{\log \frac{n_{k+1}}{n_k}}{n_k}
=O\!\left(\frac{1}{k^\delta}\right).
\end{equation}
Thus~\eqref{x9} yields that
\begin{equation}\label{x10}
|\psi_k(x)-x| 
=O\!\left(\frac{1}{k^\delta}\right).
\end{equation}
By~\eqref{0d} and~\eqref{1h2} we also have
\[
|\psi_k'(x)-1| = |\phi_k'(x+s_{m_{k+1}})-1|
= O\!\left(\frac{1}{n_k}\right)
= O\!\left(\frac{1}{k^\delta}\right).
\]
The last two equations imply that the function $r(x)$ defined by~\eqref{3a1}
satisfies $|r(x)|=O(1/k^{\delta})$. Hence
\[
K_U(z)-1
= O\!\left(\frac{1}{k^\delta}\right)
\quad\text{for } z\in S_k
\]
by~\eqref{1h1} and~\eqref{3a}.

Now 
\begin{align*}
\int_{S_k} \frac{dx\, dy}{x^2+y^2}
&
\leq \int_{S_k} \frac{dx\, dy}{x^2+4\pi^2(k-1)^2}
\\ &
\leq 2\pi\int_0^\infty \frac{dx}{x^2+4\pi^2(k-1)^2}
=\frac{\pi}{2(k-1)}
\leq \frac{\pi}{k}
\end{align*}
for $k\geq 2$ and 
\[
\int_{S_1} \frac{dx\, dy}{x^2+y^2}
\leq 2\pi \int_1^\infty \frac{dx}{x^2}=2\pi .
\]
Combining the last three inequalities and noting that $W(z)=U(z)$  for $z\in S_k$ we deduce that
\begin{equation}\label{3c}
\int_{S_k} \frac{K_W(z)-1}{x^2+y^2}dx\, dy
=
\int_{S_k} \frac{K_U(z)-1}{x^2+y^2}dx\, dy
\leq 
\frac{A_1}{k^{1+\delta}}
\end{equation}
for some constant~$A_1$.

Combining~\eqref{1g} and~\eqref{1h1} with~\eqref{3a} we also see that $U$ is 
quasiregular in the right half-plane $H^+$.

Next we show that the function $Q$ defined in section~\ref{defG} is quasiconformal.
The quasiconformality is trivial in those parts of $H^+$  where $Q(z)$ is the identity, and
it is also easily shown in $\{z\in\C\colon |z|<1,\; \re z>0\}$, where $Q$ is given
by $Q(z)=z|z|^{\gamma-1}$.
So we only have to consider the half-strip $\{z\in\C\colon 0<\re z<1,\;\im z>1\}$, where $Q(z)$
is given by~\eqref{Q}.
Similarly as in the computation of $\mu_U$ we find, for $z$ in this half-strip, that
\[
Q_z(z)=1+a(z)-i b(z)
\quad\text{and}\quad
Q_{\overline{z}}(z)=-a(z)-i b(z),
\]
where 
\[
a(x+iy)=\frac{1-x}{2}(g'(y)-1) 
\quad\text{and}\quad
b(x+iy)=\frac{1}{2}(g(y) -y)
\]
and thus 
\[
|\mu_Q(z)|^2=\frac{a(z)^2+b(z)^2}{(1+a(z))^2+b(z)^2}
=\frac{a(z)^2+b(z)^2}{1+2a(z)+a(z)^2+b(z)^2}.
\]
Since $g(y)-y=o(1)$ and $g'(y)-1=o(1)$ as $y\to\infty$ by Lemma~\ref{lemma1},
we conclude $a$ and $b$ are bounded. 
Since $g'(y)=\gamma y^{\gamma-1}/h'(g(y))=\gamma y^{\gamma-1}/n$ for some positive integer $n$ depending on $y$
by the definition of $g$ and $h$, we deduce from~\eqref{0c} that $\inf_{y\geq 1} g'(y)>0$.
This implies that 
\[
\inf_{0<x<1,\; y>1} |1+2a(x+iy)|>0.
\]
We deduce that $Q$ is indeed quasiconformal in~$H^+$.
Hence $W=U\circ Q$ is quasiregular in~$H^+$.

We put  
\[
S_k'=\{x+iy\colon 2\pi (k-1)<y<2\pi k,\; 0<x<\max\{1,8\log n_k\}\}.
\]
For $k\geq 2$ we have
\[
\int_{S_k'} \frac{dx\, dy}{x^2+y^2}
\leq \frac{1}{4\pi^2(k-1)^2}\int_{S_k}  dx\, dy
=\frac{\max\{1,8\log n_k\}}{2\pi(k-1)^2}
\leq A_2 \frac{\log k}{k^2}
\]
for some constant $A_2$ in view of~\eqref{0d}.
Let $K$ be the dilatation of $W$ in~$H^+$; that is, 
$K=\sup_{z\in H^+} K_W(z)$.
We conclude that 
\begin{equation}\label{3d}
\int_{S_k'} \frac{K_W(z)-1}{x^2+y^2}dx\, dy
\leq (K-1)A_2 \frac{\log k}{k^2}
\end{equation}
for $k\geq 2$.
Let now $R>2\pi +1$. Then $\{z\in\C\colon |z|>R\}\cap S_1'=\emptyset$ since $n_1=1$.
We deduce from~\eqref{3c} and~\eqref{3d} that
\begin{align*}
& \; \int_{\substack{z\in H^+\\|z|>R}} \frac{K_W(z)-1}{x^2+y^2}dx\, dy
\\ \leq & \; 
2\sum_{k=2}^\infty \int_{S_k'} \frac{K_W(z)-1}{x^2+y^2}dx\, dy
+
2\sum_{k=1}^\infty \int_{S_k} \frac{K_W(z)-1}{x^2+y^2}dx\, dy
\\ 
\leq & \;
2(K-1)A_2\sum_{k=2}^\infty  \frac{\log k}{k^2}
+
2A_1\sum_{k=1}^\infty \frac{1}{k^{1+\delta}}dx\, dy
<  \infty.
\end{align*}
For $|\arg z|<\pi/(2\rho)$ we have $G(z)=W(z^{\rho})$. 
It follows that
\begin{equation}\label{3e}
\begin{aligned}
\int_{\substack{|\arg z|<\frac{\pi}{2\rho}\\|z|>R^{1/\rho}}} \frac{K_G(z)-1}{x^2+y^2}dx\, dy
 & =
\int_{\substack{|\arg z|<\frac{\pi}{2\rho}\\|z|>R^{1/\rho}}}
\frac{K_W(z^{\rho})-1}{x^2+y^2}dx\, dy
\\ & = 
\frac{1}{\rho^2}\int_{\substack{z\in H^+\\|z|>R}} \frac{K_W(z)-1}{x^2+y^2}dx\, dy
< \infty.
\end{aligned}
\end{equation}

The estimate of $K_G(z)$ for $\arg(-z)<\pi/(2\sigma)$ is similar. 
We have $V(z)=v_{m_k}(q(z))$ where, instead of~\eqref{defq}, we have
\begin{equation} \label{defqx}
q(x+iy)
=x+iy+ \frac{1}{n_k}\left(\frac{y}{2\pi}-N_{k-1}\right)(\psi_k(x)-x)
\end{equation}
for $2\pi N_{k-1}\leq y<2\pi N_k$,
with
\[
\psi_k(x)=n_k\phi_k\!\left(\frac{x}{n_{k+1}}+s_{m_{k+1}}\right)-n_ks_{m_k}.
\]
Now~\eqref{x4} holds with $a(x+iy)$ as in~\eqref{x4a}, but
\begin{equation} \label{x4b}
b(x+iy)=\frac{1}{4\pi n_k}(\psi_k(x)-x).
\end{equation}
Instead of \eqref{3a1} and~\eqref{3a} we obtain
\begin{equation}\label{3f}
K_V(z)-1
\leq 
\frac{4( 1+r(x))r(x)}{ \min\{1,\psi_k'(x)\}}
\end{equation}
with
\begin{equation}\label{3f1}
r(x)=|\psi_k'(x)-1| +\frac{1}{n_k}|\psi_k(x)-x|.
\end{equation}

Now
\begin{equation}\label{3f2}
\begin{aligned}
\frac{\psi_k(x)-x}{n_k}
&= 
\phi_k\!\left(\frac{x}{n_{k+1}}+s_{m_{k+1}}\right)
-\frac{n_{k+1}}{n_k} \left(\frac{x}{n_{k+1}} + s_{m_{k+1}}\right) 
\\ & \qquad
+\frac{1}{n_k}\log \frac{n_{k+1}!}{n_k!}
+ \frac{n_{k+1}}{n_k}s_{m_{k+1}}
-s_{m_k}
-\frac{1}{n_k}\log \frac{n_{k+1}!}{n_k!}.
\end{aligned}
\end{equation}
It follows from Lemma~\ref{lemma2}, \eqref{0d} and~\eqref{x6} that
\begin{equation}\label{3f3}
\left| \frac{n_{k+1}}{n_k}s_{m_{k+1}}
-s_{m_k}
-\frac{1}{n_k}\log \frac{n_{k+1}!}{n_k!}\right| =O\!\left(\frac{1}{k^\delta}\right).
\end{equation}
For  $x\leq -2n_k\log n_k$ we find, using~Lemma~\ref{lemma2}, Lemma~\ref{lemma1c} and~\eqref{x6}, that
\[
\frac{x}{n_{k+1}}+s_{m_{k+1}}\leq-2\frac{n_k}{n_{k+1}}\log n_k+\log n_k+r_0+o(1)
\leq -\log n_k +r_0+o(1)
\]
as $k\to\infty$.  Hence $x/n_{k+1}+s_{m_{k+1}}\leq -\log n_k$
for sufficiently large~$k$.  We thus can deduce from~\eqref{1f} that 
\begin{equation}\label{3f4}
\begin{aligned}
& \;
\left|
\phi_k\!\left(\frac{x}{n_{k+1}}+s_{m_{k+1}}\right)
-\frac{n_{k+1}}{n_k} \left(\frac{x}{n_{k+1}} + s_{m_{k+1}}\right) 
+\frac{1}{n_k}\log \frac{n_{k+1}!}{n_k!}
\right|
\\ \leq & \;
c_2 \exp\!\left( \frac{x}{n_{k+1}}+s_{m_{k+1}}\right)
\leq
\frac{c_2}{n_k}
\quad\text{for } x\leq -2n_k\log n_k
\end{aligned}
\end{equation}
for large~$k$.
It follows from~\eqref{3f2}, \eqref{3f3} and~\eqref{3f4} that
\begin{equation}\label{3g}
\frac{1}{n_k} |\psi_k(x)-x|
= O\!\left(\frac{1}{k^\delta}\right)
\quad \text{for } x\leq -2n_k\log n_k.
\end{equation}
Moreover, the above arguments in conjunction with~\eqref{1h} show that
\begin{equation}\label{3h}
\frac{1}{n_k} |\psi_k(x)-x|
=O(1)
\quad \text{for }  -2n_k\log n_k<x<0.
\end{equation}

Next we note that 
\[
\psi_k'(x)=\frac{n_k}{n_{k+1}}\phi_k'\!\left(\frac{x}{n_{k+1}}+s_{m_{k+1}}\right).
\]
From~\eqref{1h3} and~\eqref{x6} we can now deduce that
\[
|\psi_k'(x)-1|
= O\!\left(\frac{1}{n_k}\right)
= O\!\left(\frac{1}{k^\delta}\right)
\quad \text{for } x\leq -2n_k\log n_k
\]
and~\eqref{1h1} yields that
\[
|\psi_k'(x)-1|=O(1)
\quad \text{for }  -2n_k\log n_k<x<0.
\]
Combining the last two inequalities with~\eqref{3f}, \eqref{3f1},
\eqref{3g} and~\eqref{3h} we conclude that 
$V$ is quasiregular in the left half-plane 
$$H^-=\{z\in\C\colon \re z< 0\}$$
and that
\begin{equation}\label{3i}
K_V(z)-1 
= O\!\left(\frac{1}{k^\delta}\right)
\quad \text{for } x\leq -2n_k\log n_k.
\end{equation}

In analogy with $S_k$ and $S_k'$ we put
\[
T_k=\{x+iy\colon 2\pi N_{k-1}<y<2\pi N_k,\; x<-2n_k\log n_k\}
\]
and
\[
T_k'=\{x+iy\colon 2\pi N_{k-1}<y<2\pi N_k,\; -2n_k\log n_k<x<0\}.
\]
For $k\geq 2$ we have
\begin{align*}
\int_{T_k} \frac{dx\, dy}{x^2+y^2}
&
\leq \int_{T_k} \frac{dx\, dy}{x^2+4\pi^2 N_{k-1}^2}
\leq 2\pi n_k \int_0^\infty \frac{dx}{x^2+4\pi^2 N_{k-1}^2}
=\frac{\pi n_k}{2N_{k-1}}.
\end{align*}
Since
\begin{equation}\label{0e}
N_k\sim (2\pi)^{\gamma-1}k^{\gamma}
\end{equation}
as $k\to\infty$ by~\eqref{0d} 
this yields
\[
\int_{T_k} \frac{dx\, dy}{x^2+y^2}
= O\!\left(\frac{1}{k}\right)
\]
for $k\geq 2$.
In analogy with~\eqref{3c} we can use these estimates and~\eqref{3i} to deduce that
\[
\int_{T_k} \frac{K_V(z)-1}{x^2+y^2}dx\, dy
\leq 
\frac{A_3}{k^{1+\delta}}
\]
for $k\geq 2$ and some constant~$A_3$.
Similarly, if $k\geq 2$, then
\[
\begin{aligned}
\int_{T_k'} \frac{dx\, dy}{x^2+y^2}
& \leq \frac{1}{4\pi^2N_{k-1}^2}\int_{T_k'}  dx\, dy
\\ &
=\frac{2\pi (N_k-N_{k-1})2n_k\log n_k}{4\pi^2 N_{k-1}^2}
=\frac{n_k^2\log n_k}{\pi N_{k-1}^2}
\leq A_4 \frac{\log k}{k^2}
\end{aligned}
\]
for some constant $A_4$ by~\eqref{0d} and~\eqref{0e}.
Since also
\[
\int_{T_1\cap \{z\colon |z|>R\}} \frac{dx\, dy}{x^2+y^2}<\infty 
\]
for $R>0$,
the above inequalities now imply that
\[
\int_{\substack{z\in H^-\\|z|>R}} \frac{K_V(z)-1}{x^2+y^2}dx\, dy
< \infty .
\]
For $|\arg(-z)|<\pi/(2\sigma)$ we have $G(z)=V(-(-z)^{\sigma})$. 
Similarly as in~\eqref{3e} it follows that
\[
\int_{\substack{|\arg (-z)|<\frac{\pi}{2\sigma}\\|z|>R^{1/\sigma}}} \frac{K_G(z)-1}{x^2+y^2}dx\, dy
 =
\frac{1}{\sigma^2} \int_{\substack{z\in H^-\\|z|>R}} \frac{K_V(z)-1}{x^2+y^2}dx\, dy
< \infty.
\]
Combining this with~\eqref{3e} we see that if $r>0$, then
\[
\int_{|z|>r } \frac{K_G(z)-1}{x^2+y^2}dx\, dy
< \infty.
\]
Thus $G$ satisfies the hypothesis of the Teich\-m\"ul\-ler--Wit\-tich--Be\-linskii
theorem~\cite[\S V.6]{LV}.
This theorem,
together with the existence theorem for quasiconformal mappings~\cite[\S V.1]{LV},
yields that there exists a quasiconformal homeomorphism $\tau\colon\C\to\C$
and an entire function $F$ such that 
\begin{equation} \label{FG}
G(z)=F(\tau(z)) \quad\text{and}\quad 
\tau(z)\sim z \quad\text{as } z\to\infty.
\end{equation} 

\subsection{Asymptotic behavior of $F,E$ and $A$: proof of Theorems~\ref{thm1} and~\ref{thm1a}}
\label{AsyBeh}
\begin{proof}[Proof of Theorem~\ref{thm1}]
We begin by estimating the counting function of the zeros of~$F$.
Let $r>0$ and choose $k\in\N$ such that $2\pi(k-1)<r\leq 2\pi k$.
It follows from the construction and~\eqref{0e} that
\[
n(r,0,U)\leq 4\sum_{j=1}^k m_j\sim 2 N_k 
\sim 2(2\pi)^{\gamma-1}  k^\gamma 
=O(r^\gamma).
\]
Similarly, if $2\pi N_{k-1}<r\leq 2\pi N_k$, then
\[
n(r,0,V)
\leq 4\sum_{j=1}^k m_j\sim 2 N_k 
\sim 2(2\pi)^{\gamma-1}  k^\gamma 
=O(r).
\]
This implies that
\begin{equation} \label{Gn}
n(r,0,G)=O(r^{\rho\gamma})+O(r^{\sigma})
=O(r^{\sigma})
\end{equation} 
since $\rho\gamma=\sigma$.
Now~\eqref{FG} yields
$n(r,0,F)=O(r^{\sigma})$ and hence
\begin{equation} \label{Fn}
N(r,0,F)=O(r^{\sigma})
\end{equation} 
as $r\to\infty$.

Next we note that the coefficients in the Taylor series expansion of $h_m$ 
are all non-negative. This implies that $|h_m(z)|\leq h_m(|z|)$ 
and hence 
\[
|g_m(z)|\leq g_m(\re z)
\]
for all $z\in\C$.
Clearly this implies that 
\[
|v_m(z)|\leq v_m(\re z)
\quad\text{for } z\in H^-
\quad\text{and} \quad
|u_m(z)|\leq u_m(\re z)
\quad\text{for } z\in H^+.
\]

Let $z=x+iy\in H^-$ with $\im z\geq 0$  and 
$k\in\N$ with  $2\pi N_{k-1}\leq \im z < 2\pi N_k$.
With $t=(y-2\pi N_{k-1})/(2\pi n_k)$ we have $0\leq t< 1$ and
\begin{align*}
|V(x+iy)|
&
=|v_{m_k}((1-t)x+t \psi_k(x) +iy)|
\\ &
\leq v_{m_k}((1-t)x+t \psi_k(x))\leq v_{m_k}(0)
=g(s_{m_k})=2.
\end{align*}
Thus
\begin{equation} \label{maxV}
|V(z)|\leq 2
\end{equation} 
for $z\in H^-$.
For $z\in H^+$
we use the estimate
\begin{equation}\label{gmH+}
\begin{aligned}
|g_m(z)|
&
\leq g_m(\re z)
=\sum_{j=0}^{2m}(-1)^j \frac{1}{j!}e^{j\re z}\exp \!\left( e^{\re z}\right)
\\ &
\leq e^{1+2m \re z}\exp \!\left( e^{\re z}\right).
\end{aligned}
\end{equation}
Again, assuming $\im z\geq 0$, we choose $k\in\N$ such that $2\pi(k-1)\leq \im z< 2\pi k$.
Then 
\[
|U(z)|=|u_{m_k}(q(z))|
=|g_{m_k}\!\left(q(z)+s_{m_k}\right)|
\leq g_{m_k}\!\left(\re q(z)+s_{m_k}\right),
\]
with $q(z)$ defined by~\eqref{defq}.
Noting that $m_k=O\!\left(k^{\gamma -1}\right)=O\!\left(|z|^{\gamma -1}\right)$ by~\eqref{0d} and thus
\begin{equation}\label{smk}
s_{m_k}=\log n_k+O(1)=\log m_k+O(1)=O(\log|z|)
\end{equation}
we deduce from~\eqref{1g} and~\eqref{gmH+} that 
\begin{align*}
\log |U(z)| 
&\leq 1+ 2m_k\!\left(\re q(z) + s_{m_k}\right) +\exp\!\left(\re q(z)+s_{m_k}\right)
\\ &
\leq \exp((1+o(1))|z|)+O(|z|^\gamma)
\leq \exp((1+o(1))|z|)
\end{align*}
and hence 
\[
\log\log|U(z)|
\leq (1+o(1))|z|
\quad\text{as } z\to\infty\text{ in }H^+.
\]
Together with~\eqref{maxV} we conclude that 
\begin{equation} \label{GM}
\log\log|G(z)|
\leq (1+o(1))|z|^\rho
\end{equation} 
as $|z|\to\infty$.
Hence~\eqref{FG} yields that
\[
\log\log|F(z)|
\leq (1+o(1))|z|^{\rho}
\]
as $|z|\to\infty$.
The lemma on the logarithmic derivative~\cite[Section~3.1]{GO} now implies 
that $E=F/F'$ satisfies 
\begin{equation}\label{mr1E}
m\!\left(r,\frac{1}{E}\right)=O(r^{\rho}).
\end{equation}
By~\eqref{Fn}, and since the zeros of $E$ are simple, we have
\[
N(r,0,E)=O(r^{\sigma})
\]
and we conclude that 
\[
T(r,E)=O(r^{\sigma}).
\]
In particular, $E$ has finite order.
Since $F$ is entire, $E$ is clearly a special Bank--Laine function.
Thus $E$ is the product of two solutions of~\eqref{w''+Aw}, one of which has no zeros.

The lemma on the logarithmic derivative, together with \eqref{BL} and \eqref{mr1E},
also implies that
\[
m(r,A)=2 m\!\left(r,\frac{1}{E}\right) +O(\log r)= O(r^{\rho}).
\]
We thus have $\lambda(E)\leq \rho(E)\leq\sigma$ and $\rho(A)\leq \rho$.
Since 
\[
\frac{1}{\rho}+\frac{1}{\sigma}=2
\]
by the definition of~$\sigma$, we deduce from~\eqref{ineq1} that actually
$\lambda(E)= \rho(E)=\sigma$ and $\rho(A)= \rho$.
This completes the proof of Theorem~\ref{thm1}.
\end{proof}

\begin{proof}[Proof of Theorem~\ref{thm1a}]
We estimate the asymptotics of $F$, $E$ and $A$ more accurately than in the 
previous proof.
First we note that for $|z|> 4m$ we have
\begin{equation} \label{Pm}
\begin{aligned}
\left|P_m(z)-\frac{z^{2m}}{(2m)!}\right|
& =
\left|\sum_{k=0}^{2m-1}(-1)^k\frac{z^k}{k!}\right|
\leq \sum_{k=0}^{2m-1}\frac{|z|^k}{k!}
= \sum_{k=0}^{2m-1}\frac{|z|^{2m-1}}{|z|^{2m-1-k}k!}
\\ &
\leq  \frac{|z|^{2m-1}}{(2m-1)!}
 \sum_{k=0}^{2m-1}\frac{1}{2^{2m-1-k}}
\leq \frac{4m}{|z|} \frac{|z|^{2m}}{(2m)!}
\end{aligned}
\end{equation}
and thus in particular $P_m(z)\neq 0$.

Let $0<\varepsilon_1<\varepsilon_2<\varepsilon_3<\varepsilon_4<\varepsilon_5<\varepsilon<1$ and, for $j\in\{1,2\}$, put
\[
H^+_{\varepsilon_j}=\left\{z\in\C\colon |\arg z|<(1-\varepsilon_j)\frac{\pi}{2}\right\}.
\]
Given $z\in H^+_{\varepsilon_1}$ with $\im z\geq 0$, we choose  $k\in\N$ with $2\pi(k-1)\leq \im z< 2\pi k$.
We then have 
$k=O(\re z)$ and hence, by~\eqref{0d},
$\log m_k=o(\re z)$ and
$m_k=o(e^z)$ as $z\to\infty$ in $H^+_{\varepsilon_1}$.
We deduce from~\eqref{Pm} that 
\[
P_{m_k}(e^z)\sim 
\frac{e^{2m_k z}}{(2m_k)!}
\]
and thus, by Stirling's formula,
\[
\log P_{m_k}(e^z)
= 2m_k z- \log ((2m_k)!) +o(1)
\sim  2m_k z
\quad\text{as } z\to\infty\text{ in }H^+_{\varepsilon_1}.
\]
It follows that 
\[
\log g_{m_k}(z)
= (1+o(1))2m_k z+ e^z
\sim  e^z 
\quad\text{as } z\to\infty\text{ in }H^+_{\varepsilon_1}.
\]
With $q(z)$ as in~\eqref{defq} we have
\begin{equation} \label{asyU}
\log U(z)=\log u_{m_k}(q(z))
=\log g_{m_k}(q(z)+s_{m_k})
\end{equation} 
and $q(z)+s_{m_k}\sim z$ as $z\to\infty$ in $H^+_{\varepsilon_2}$ by~\eqref{smk}.
Thus  $q(z)+s_{m_k}\in H^+_{\varepsilon_1}$ if $z\in H^+_{\varepsilon_2}$ and $|z|$ is sufficiently large
and we deduce from the last two equations that
\begin{equation} \label{asyU1}
\log U(z)
\sim \exp\!\left(q(z)+s_{m_k}\right)
=\exp((1+o(1))z)
\quad\text{as } z\to\infty\text{ in }H^+_{\varepsilon_2}
\end{equation} 
and thus
\begin{equation} \label{asyG}
\log G(z)
=\log U(z^\rho)
=\exp\!\left((1+o(1))z^{\rho}\right)
\quad \text{for } |\arg z|<(1-\varepsilon_2)\frac{\pi}{2\rho} .
\end{equation} 
This implies that 
\[
\begin{aligned}
\log F(z)
&=
\log G(\tau^{-1}(z))
=\exp\!\left((1+o(1))\tau^{-1}(z)^{\rho}\right)
\\ &
=\exp\!\left((1+o(1))z^{\rho}\right)
\quad \text{for } |\arg z|<(1-\varepsilon_3)\frac{\pi}{2\rho}
\end{aligned}
\]
and hence
\begin{equation} \label{asyllF}
\log\log F(z)
\sim 
z^{\rho}
\quad \text{for } |\arg z|<(1-\varepsilon_3)\frac{\pi}{2\rho}
\end{equation} 
as $|z|\to\infty$.

An asymptotic equality like~\eqref{asyllF} can be differentiated by passing to a smaller sector.
In fact, if $|\arg z|<(1-\varepsilon_4)\pi/(2\rho)$, then
$\{\zeta\colon |\zeta-z|=c|z|\}$ is contained in 
$\{\zeta\colon |\arg \zeta|<(1-\varepsilon_3)\pi/(2\rho)$ for sufficiently small~$c$,
and thus
\[
\begin{aligned}
\left|\frac{d}{dz} \!\left(\log\log F(z)-z^\rho\right)\right|
&=
\left|\frac{1}{2\pi i} \int_{|\zeta-z|=c|z|}\frac{\log\log F(\zeta)-\zeta^\rho}{(\zeta-z)^2}d\zeta\right|
\\ &
\leq
\frac{1}{c|z|} \max_{|\zeta-z|=c|z|} \left| \log\log F(\zeta)-\zeta^\rho\right| =o\!\left(|z|^{\rho-1}\right).
\end{aligned}
\]
This implies that
\[
\frac{F'(z)}{F(z)\log F(z)}
\sim 
\rho z^{\rho-1}
\quad \text{for } |\arg z|<(1-\varepsilon_4)\frac{\pi}{2\rho}
\]
and hence
\begin{align*}
E(z)
&=
\frac{F(z)}{F'(z)}
\sim\frac{z^{1-\rho}}{\rho \log F(z)}
\\ &
=\frac{z^{1-\rho}}{\rho} 
\exp\!\left(-(1+o(1))z^{\rho}\right)
\quad \text{for } |\arg z|<(1-\varepsilon_4)\frac{\pi}{2\rho}
\end{align*}
as $|z|\to\infty$.
Actually, this yields
\begin{equation} \label{asyE}
E(z)
=
\exp\!\left(-(1+o(1))z^{\rho}\right)
\quad \text{for } |\arg z|<(1-\varepsilon_4)\frac{\pi}{2\rho}
\end{equation} 
as $|z|\to\infty$.
This implies that $\log E(z)\sim -z^{\rho}$ for $|\arg z|<(1-\varepsilon_4)\pi/(2\rho)$.
Hence, differentiating this asymptotic equality as explained after~\eqref{asyllF}, we obtain
\[
\frac{E'(z)}{E(z)}=
\frac{d\log E(z)}{dz}
\sim -\rho z^{\rho-1}
\quad \text{for } |\arg z|<(1-\varepsilon_5)\frac{\pi}{2\rho}
\]
and
\[
\frac{E''(z)}{E(z)}-\left(\frac{E'(z)}{E(z)}\right)^2=
\frac{d^2\log E(z)}{dz^2}
\sim -\rho(\rho-1) z^{\rho-2}
\quad \text{for } |\arg z|<(1-\varepsilon)\frac{\pi}{2\rho}.
\]
Together with~\eqref{BL} and~\eqref{asyE} the last two formulas yield
\begin{equation} \label{asyA}
\begin{aligned}
A(z)
&\sim -\frac14 \frac{1}{E(z)^2}
\sim
-\frac14 \exp\!\left((2+o(1))z^{\rho}\right)
\\ &
\sim
\exp\!\left((2+o(1))z^{\rho}\right)
\quad \text{for } |\arg z|<(1-\varepsilon)\frac{\pi}{2\rho}.
\end{aligned}
\end{equation} 
Now~\eqref{asyAE1} follows from~\eqref{asyE} and~\eqref{asyA}.

The proof of~\eqref{asyAE2} and~\eqref{asyAE3} is similar. 
Here we use that
\begin{align*}
h_m(z)-1
&= \frac{1}{(2m)!}\int_0^z \zeta^{2m}e^{\zeta}d\zeta 
\\ &
=
\frac{1}{(2m+1)!}z^{2m+1}+ \frac{1}{(2m)!}\int_0^z \zeta^{2m}(e^{\zeta}-1)d\zeta
\\ &
= (1+\eta_m(z)) \frac{1}{(2m+1)!}z^{2m+1}
\end{align*}
where
\[
|\eta_m(z)|\leq \frac{2m+1}{|z|^{2m+1}}\left| \int_0^z \zeta^{2m}(e^{\zeta}-1)d\zeta\right|
\leq 2\frac{2m+1}{|z|^{2m+1}} \int_0^{|z|} u^{2m+1}du
\leq 2|z|
\]
for $|z|\leq 1$.
Thus, with $n=2m+1$ as before, we have
\begin{align*}
\log( v_m(z)-1) 
& 
=\log\!\left( g_m\!\left(\frac{z}{n}+s_m\right)-1\right) 
=\log\!\left( h_m\!\left(e^{z/n+s_m}\right)-1\right) 
\\ &
=-\log( n!) +z+ ns_m +\log\left(1+\eta_m\left(e^{z/n+s_m}\right)\right)
\\ &
=z+ \log\left(1+\eta_m\left(e^{z/n+s_m}\right)\right)+O(n)
\end{align*}
by Lemma~\ref{lemma2}.

Similarly as before, we consider, for $j\in\{1,2\}$, the sectors
\[
H^-_{\varepsilon_j}=\left\{z\in\C\colon |\arg (-z)|<(1-\varepsilon_j)\frac{\pi}{2}\right\},
\]
For $z\in H^-_{\varepsilon_1}$ with $\im z\geq 0$ we choose $k\in\N$ with $2\pi N_{k-1}\leq \im z < 2\pi N_k$.
We can deduce from~\eqref{0d}, \eqref{0e} and Lemma~\ref{lemma2} that 
$\re z/n_{k-1}+s_{m_k}\to-\infty$ as $z\to\infty$ in $H^-_{\varepsilon_1}$.
This implies that
$e^{z/n_{k-1}+s_{m_k}}\to 0$ 
and hence $\eta_{m_k}(e^{z/n_{k-1}+s_{m_k}})\to 0$ 
as $z\to\infty$ in $H^-_{\varepsilon_1}$.
Moreover, $n_k=o(|z|)$ as $z\to\infty$ in $H^-_{\varepsilon_1}$, again by~\eqref{0e}.

It follows that $\log ( v_{m_k}(z)-1)\sim z$ 
as  $z\to\infty$ in $H^-_{\varepsilon_1}$.
Hence, with $q(z)$ defined by~\eqref{defqx}, we have
\begin{equation} \label{asyV1}
\log ( V(z) -1)=\log ( v_{m_k}(q(z))-1)\sim q(z)\sim z
\quad\text{as } z\to\infty\text{ in }H^-_{\varepsilon_2}
\end{equation}
and thus
\begin{equation} \label{asyVG}
\begin{aligned}
\log ( G(z) -1)
&=
\log ( V(-(-z)^\sigma) -1)
\\ &
\sim -(-z)^\sigma
\quad\text{for } |\arg (-z)|<(1-\varepsilon_2)\frac{\pi}{2\sigma}.
\end{aligned}
\end{equation}
This implies that
\begin{equation}\label{F1}
\begin{aligned}
\log ( F(z) -1)
&=\log\!\left(G\!\left(\tau^{-1}(z)\right)-1\right)
\sim -\!\left(-\tau^{-1}(z)\right)^\sigma
\\ &
\sim -(-z)^\sigma
\quad\text{for } |\arg (-z)|<(1-\varepsilon_3)\frac{\pi}{2\sigma}.
\end{aligned}
\end{equation}
In particular, $F(z)\to 1$ as $z\to\infty$, $|\arg (-z)|<(1-\varepsilon_3)\pi/(2\sigma)$.
Differentiating the last asymptotic equation we conclude that
\begin{equation}\label{F2}
\frac{F'(z)}{F(z) -1}\sim \sigma (-z)^{\sigma-1}
\quad\text{for } |\arg (-z)|<(1-\varepsilon_4)\frac{\pi}{2\sigma}.
\end{equation}
and thus, since 
\[
E(z)=
\frac{F(z)}{F'(z)}\sim 
\frac{1}{F'(z)}
=
\frac{F(z) -1}{F'(z)}
\frac{1}{F(z)-1},
\]
we deduce from~\eqref{F1} and~\eqref{F2} that
\begin{align*}
E(z)
&=
\frac{1}{\sigma(-z)^{\sigma-1}}\exp\!\left( (1+o(1))(-z)^\sigma  \right)
\\ &
=
\exp\!\left( (1+o(1))(-z)^\sigma  \right)
\quad\text{for } |\arg (-z)|<(1-\varepsilon_4)\frac{\pi}{2\sigma}.
\end{align*}
Thus
\[
\log E(z) \sim (-z)^\sigma 
\quad\text{for } |\arg (-z)|<(1-\varepsilon_4)\frac{\pi}{2\sigma},
\]
which implies~\eqref{asyAE2}.
As before it follows that
\[
\frac{E'(z)}{E(z)}
\sim -\sigma (-z)^{\sigma-1}
\quad\text{for } |\arg (-z)|<(1-\varepsilon_5)\frac{\pi}{2\sigma}
\]
and
\[
\frac{E''(z)}{E(z)}-\left(\frac{E'(z)}{E(z)}\right)^2
\sim \sigma(\sigma-1) (-z)^{\sigma-2}
\quad\text{for } |\arg (-z)|<(1-\varepsilon)\frac{\pi}{2\sigma}.
\]
Thus 
\[
\frac{E''(z)}{E(z)}\sim \left(\frac{E'(z)}{E(z)}\right)^2
\sim \sigma^2  (-z)^{2\sigma-2}
\quad\text{for } |\arg (-z)|<(1-\varepsilon)\frac{\pi}{2\sigma}.
\]
Now~\eqref{BL} yields
\[
A(z)\sim -\frac14\left(\frac{E'(z)}{E(z)}\right)^2
\sim
-\frac{\sigma^2}{4} (-z)^{2\sigma-2}
\quad \text{for } |\arg z|<(1-\varepsilon)\frac{\pi}{2\rho},
\]
from which~\eqref{asyAE3} immediately follows.
\end{proof}

\section{Proof of Theorem~\ref{thm2}}\label{proof2}
The main idea used in~\cite{R,Shen1985,Toda1993} is that~\eqref{BL}
implies that when $A$ is large, then $E$ is small, except possibly in the set
where $E''/E$ or $E'/E$ is large, but the latter set is small by the 
lemma on the logarithmic derivative.
We shall also use this idea, but we will need that every
unbounded component of the set
$\{z\in\C\colon |A(z)|> K|z|^p\}$ actually contains a path where $E$ tends to zero.
In order to prove this we need to show that $E$ is small on certain paths where $A$ is large
also on the exceptional set where the logarithmic derivatives are large. 
The key tool used here is an estimate of harmonic measure. 

For $a\in\C$ and $r>0$ let $D(a,r)=\{z\in\C\colon |z-a|<r\}$ be the disk
around $a$ of radius~$r$.

\begin{lemma} \label{la-th2-1}
Let $f$ be a meromorphic function of finite order, $\alpha>0,$ $\eta>0$
and $c>\rho(f)$.
Let $\{z_k\}$ be the set of zeros and poles of~$f$ in $\C\backslash\{0\}$.
Then there exists $r_0>0$ such that, for $|z|>r_0$,
\begin{equation}\label{4a1}
\left|\frac{f'(z)}{f(z)}\right|\leq  \exp\!\left((3|z|)^{\alpha}\right)
\quad\text{if }
z\notin \bigcup_k D\!\left(z_k, \exp\!\left(-|z_k|^{\alpha}\right)\right)
\end{equation}
and
\begin{equation}\label{4a2}
\left|\frac{f'(z)}{f(z)}\right|\leq  |z|^{c+\eta}
\quad\text{if }
z\notin \bigcup_k D\!\left(z_k, |z_k|^{-\eta}\right).
\end{equation}
\end{lemma}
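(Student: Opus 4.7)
The argument rests on the classical Poisson--Jensen representation of the logarithmic derivative. Let $\{a_\nu\}$ and $\{b_\mu\}$ denote the zeros and poles of $f$ lying in $\{|w|<R\}$. Differentiating the Poisson--Jensen formula yields, for $|z|<R$ outside these points,
\begin{multline*}
\frac{f'(z)}{f(z)} = \frac{1}{\pi}\int_0^{2\pi}\frac{Re^{i\theta}\log|f(Re^{i\theta})|}{(Re^{i\theta}-z)^2}\,d\theta \\
+ \sum_\nu\!\left(\frac{1}{z-a_\nu}+\frac{\bar a_\nu}{R^2-\bar a_\nu z}\right) - \sum_\mu\!\left(\frac{1}{z-b_\mu}+\frac{\bar b_\mu}{R^2-\bar b_\mu z}\right).
\end{multline*}
I would apply this identity with $R=2|z|$ and fix some small $\epsilon>0$.

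The auxiliary terms are easily dispatched. Since $\rho(f)<\infty$, one has $T(R,f)+T(R,1/f)=O(R^{\rho(f)+\epsilon})$, so the integral term is $O(|z|^{\rho(f)+\epsilon-1})$. Each reflection term satisfies $|\bar z_k/(R^2-\bar z_k z)|\leq 1/|z|$, and there are $n(R,f)+n(R,1/f)=O(|z|^{\rho(f)+\epsilon})$ of them, contributing $O(|z|^{\rho(f)+\epsilon-1})$ in total. Both pieces are negligible compared with the targets $\exp((3|z|)^\alpha)$ and $|z|^{c+\eta}$. The task therefore reduces to estimating
\[
S(z):=\sum_{|z_k|<2|z|}\frac{1}{|z-z_k|},
\]
where $z_k$ runs through the zeros and poles of~$f$.

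To obtain \eqref{4a1}: if $z\notin\bigcup_k D(z_k,\exp(-|z_k|^\alpha))$, then $|z-z_k|\geq\exp(-(2|z|)^\alpha)$ for every relevant $z_k$, so each summand is $\leq\exp((2|z|)^\alpha)$. Multiplying by the term count $O(|z|^{\rho(f)+\epsilon})$ gives $S(z)\leq|z|^{\rho(f)+\epsilon}\exp((2|z|)^\alpha)$, which for large $|z|$ is dominated by $\exp((3|z|)^\alpha)$ because the super-polynomial gap $(3^\alpha-2^\alpha)|z|^\alpha$ eventually exceeds $(\rho(f)+\epsilon)\log|z|$. To obtain \eqref{4a2}: choose $\epsilon\in(0,c-\rho(f))$. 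If $z\notin\bigcup_k D(z_k,|z_k|^{-\eta})$, then $|z-z_k|\geq(2|z|)^{-\eta}$ for $|z_k|<2|z|$, so $S(z)\leq(2|z|)^\eta\cdot(2|z|)^{\rho(f)+\epsilon}$; since $\rho(f)+\epsilon<c$, the spare polynomial factor $|z|^{c-\rho(f)-\epsilon}$ absorbs all multiplicative constants and the auxiliary remainders, yielding $S(z)\leq|z|^{c+\eta}$ for large $|z|$.

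No genuine obstacle is expected; both bounds are essentially folklore consequences of Poisson--Jensen. The only items worth checking carefully are (i) that in \eqref{4a1} the super-polynomial gap $\exp((3^\alpha-2^\alpha)|z|^\alpha)$ truly swallows the polynomial count $|z|^{\rho(f)+\epsilon}$ of exceptional disks, and (ii) that in \eqref{4a2} the strict inequality $c>\rho(f)$ leaves enough room in the exponent to absorb the multiplicative constants from the reflection terms and the factor $2^\eta$ coming from the inequality $|z_k|\leq 2|z|$.
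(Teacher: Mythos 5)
Your proof is correct and follows essentially the same approach as the paper: both set $R=2|z|$ in the (differentiated) Poisson--Jensen estimate and bound the key sum $\sum_{|z_k|<R}|z-z_k|^{-1}$ by multiplying the maximum possible summand (dictated by the exceptional disks) by the polynomial term count $n(R)=O(R^c)$. The only cosmetic difference is that the paper cites the consolidated inequality \cite[Chapter~3, (1.3$'$)]{GO}, which already packages the integral and reflection terms into $4RT(R,f)/(R-|z|)^2$, whereas you rederive those bounds from scratch.
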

\begin{remark}
The estimate~\eqref{4a2} is standard~\cite[p.~74]{Val}
and~\eqref{4a1} is proved 
by the same method.
\end{remark}
\begin{proof}[Proof of Lemma~\ref{la-th2-1}]
We use the estimate \cite[Chapter~3, (1.3$'$)]{GO}
\begin{equation}\label{4a}
\left|\frac{f'(z)}{f(z)}\right|\leq\frac{4RT(R,f)}{(R-|z|)^2}+2\sum_{|z_k|<R}
\frac{1}{|z-z_k|}
 \quad \text{for } |z|<R,
\end{equation}
which is obtained from the Poisson--Jensen formula
and forms the basis for the proof of the lemma on the logarithmic derivative.
We choose $R=2|z|$.  
Then the first summand on the right side of~\eqref{4a} is less than $R^{c-1}$ for 
large~$R$.
The second summand is less than $2n(R)\exp\!\left(R^{\alpha}\right)$
if $|z-z_k|\geq \exp\!\left(-|z_k|^\alpha\right)$ for all~$k$,
where $n(R)$ denotes the number of $z_k$ of modulus at most~$R$.
Since $n(R)\leq R^c$ for large $R$ we obtain
\[
\left|\frac{f'(z)}{f(z)}\right|\leq R^{c-1} 
+2R^c \exp\!\left(R^{\alpha}\right)
\]
for large~$R$, from which~\eqref{4a1} easily follows.
The estimate~\eqref{4a2} is proved analogously.
\end{proof}

We shall use the following version of the ``two constants theorem'' which is obtained from
a suitable harmonic measure estimate;
see~\cite[p.~113, Satz~4]{NevanlinnaEAF}. 
There only the case $z=0$ is stated, but the version below follows immediately.
\begin{lemma} \label{la-th2-2}
Let $G$ be a domain,
$0<\delta<1$, $0<\Theta<1$ and $R>0$.
Let $z\in G$ and
suppose that the set of all $r\in(0,R]$ for which the circle 
$\partial D(z,r)$ 
intersects the complement of $G$ has measure at least $(1-\Theta)R$.

Let $w\colon G\to\C$ be an analytic function. Suppose that $|w(\zeta)|<1$ for all $\zeta\in G$ and
that $\limsup_{\zeta\to \xi}|w(\zeta)|\leq \delta$ for all $\xi\in\partial G$ satisfying $|\xi-z|<R$.
Then
\begin{equation}\label{4b}
|w(z)|\leq \delta^M
\quad\text{with }
M= \frac{2}{\pi}\arcsin\frac{1-\Theta}{1+\Theta}.
\end{equation}
\end{lemma}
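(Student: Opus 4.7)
The plan is to reduce the statement to the classical two--constants theorem combined with a lower bound on harmonic measure. By translation, we may assume $z=0$: all hypotheses are invariant under replacing $G$ by $G-z$ and $w$ by $\zeta\mapsto w(\zeta+z)$. Setting $\alpha=\partial G\cap D(0,R)$, I would apply the two--constants theorem to the subharmonic function $\log|w|$ on $G$, using that $\limsup|w|\leq\delta$ on $\alpha$ and $|w|<1$ on the rest of $\partial G$, to obtain
\begin{equation}
\log|w(0)|\leq (\log\delta)\,\omega(0,\alpha,G),
\end{equation}
where $\omega(0,\alpha,G)$ denotes the harmonic measure of $\alpha$ at $0$ in $G$. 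Since $\log\delta<0$, the conclusion $|w(0)|\leq\delta^M$ follows once we show $\omega(0,\alpha,G)\geq M$.

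For the harmonic measure estimate, I would first pass to the bounded domain $G'=G\cap D(0,R)$. The extra piece of $\partial G'$ lies on $\partial D(0,R)$ and carries only the bound $|w|<1$, so the same two--constants argument yields $|w(0)|\leq\delta^{\omega(0,\alpha,G')}$, reducing the task to bounding $\omega(0,\alpha,G')$ from below. Next I would invoke a circular (Beurling--type) projection: the hypothesis provides a set $E\subset(0,R]$ of linear measure at least $(1-\Theta)R$ such that for every $r\in E$ the circle $\partial D(0,r)$ meets $\C\setminus G$. Radial projection of $\partial G'$ onto the ray $(-\infty,0]$ can only decrease harmonic measure at the origin, so
\begin{equation}
\omega(0,\alpha,G')\geq \omega\!\left(0,\{-r:r\in E\},\, D(0,R)\setminus\{-r:r\in E\}\right).
\end{equation}
A standard rearrangement argument shows that among all $E\subset(0,R]$ of prescribed measure $(1-\Theta)R$, the right--hand side is minimized when $E$ is pushed as far from the origin as possible, that is $E=[\Theta R,R]$, so that the slit $\{-r:r\in E\}$ equals $[-R,-\Theta R]$.

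The remaining task is the explicit evaluation of the harmonic measure of $[-R,-\Theta R]$ at $0$ in the slit disk $D(0,R)\setminus[-R,-\Theta R]$. This is a classical conformal--mapping computation: rescaling by $1/R$ reduces to the unit disk, after which a Joukowski--type map followed by a Möbius transformation sends the slit disk onto a half--disk (or upper half--plane with two symmetric boundary arcs), from which the harmonic measure at the image of $0$ is read off as an arc length, giving precisely $M=\frac{2}{\pi}\arcsin\frac{1-\Theta}{1+\Theta}$.

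The main obstacle is this last computation together with verifying the extremality of the configuration $E=[\Theta R,R]$. Both, however, are classical; indeed the $z=0$ case of the whole statement is already recorded as \cite[p.~113, Satz~4]{NevanlinnaEAF}, and the passage to general $z\in G$ is immediate via the translation performed at the outset. Thus the proof amounts to invoking this classical result after translation.
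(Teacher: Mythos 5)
Your proposal matches the paper's approach: the paper does not prove this lemma either, but simply cites Nevanlinna's Satz~4 (loc.\ cit., p.~113) for the $z=0$ case and remarks that the stated version follows immediately, which is exactly the translation argument you give and the conclusion you reach. Your additional sketch of what lies behind Nevanlinna's estimate (two-constants theorem, Beurling-type circular projection of the obstacle onto $(-\infty,0]$, extremality of the configuration $E=[\Theta R,R]$, and the explicit conformal-map evaluation of the harmonic measure of the slit $[-R,-\Theta R]$ at the origin) is a correct account of the classical argument, though the paper itself relies solely on the citation.
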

Noting that $\arcsin(1-x)\geq \pi/2-\sqrt{2x}$ we obtain
\begin{equation}\label{4c}
M\geq \frac{2}{\pi}\arcsin(1-2\Theta)\geq 1-\frac{4}{\pi}\sqrt{\Theta}
\end{equation}
in~\eqref{4b}.

The following result was proved in~\cite{Eremenko1981}.
\begin{lemma} \label{la-th2-3}
Let $f$ be a transcendental entire function, $\varepsilon>0$, $K>0$ and $p\geq 0$.
Suppose that $|f(z)|\leq K|z|^p$ for $z$ on some curve tending to $\infty$ and let
$U$ be an unbounded component of $\{z\in\C\colon |f(z)|> K|z|^p\}$.
Then there exists a curve $\gamma$ tending to $\infty$ in $U$ such that
$|f(z)|>\exp\!\left(|z|^{1/2-\varepsilon}\right)$ for $z$ in~$\gamma$.
\end{lemma}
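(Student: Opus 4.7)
The plan is to follow Eremenko's approach from \cite{Eremenko1981}. The component $U$ is a \emph{direct tract} of $f$: the function $v(z)=\log|f(z)|-p\log|z|-\log K$ is positive in $U$ and vanishes on $\partial U$ for $|z|$ large. First I would reduce to the simply connected case by filling in any bounded holes of $U$ (which does not affect the relevant growth estimates), or equivalently by exhausting $U$ by simply connected subdomains. One may then work with a conformal parametrization of the tract.

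The core quantitative input is the Ahlfors distortion theorem applied to (the filled version of) $U$. Let $\theta(r)$ denote the angular measure of $U\cap\{|z|=r\}$, which satisfies $\theta(r)\leq 2\pi$. The theorem yields a path $\gamma_0\subset U$ tending to infinity such that, for $z\in\gamma_0$,
\[
\log|f(z)|\geq \pi\int_{r_0}^{|z|}\frac{dt}{t\,\theta(t)}-C.
\]
The trivial bound $\theta(r)\leq 2\pi$ gives only $\log|f(z)|\geq\tfrac12\log|z|-C$, which is too weak. The improvement to $\exp(|z|^{1/2-\varepsilon})$ is the content of the Denjoy--Carleman--Ahlfors theorem for direct tracts: a single direct tract forces the restricted order of growth in the tract to be at least $1/2$. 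The dichotomy is roughly that if $\theta(r)$ is close to $2\pi$ on a set of large logarithmic density, then $f$ grows rapidly by standard lower bounds for entire functions (since $U$ essentially fills the plane); while if $\theta(r)$ is substantially smaller than $2\pi$ on a large set, then the integral above itself forces growth much faster than any fixed power of $r^{1/2-\varepsilon}$. Quantifying this dichotomy yields a sequence $z_k\in\gamma_0$ with $z_k\to\infty$ along which $|f(z_k)|\geq\exp(|z_k|^{1/2-\varepsilon})$.

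The main obstacle is upgrading this sequential growth into a uniform bound on a single curve. Here I would use the level-set construction. For large $c$, put $U(c):=\{z\in U:|f(z)|>c\}$; by the direct tract property (applied to $\log|f|-\log c$), each $U(c)$ has a unique unbounded component $U^*(c)$, and these are nested as $c$ increases. Taking $c(r):=\exp(r^{1/2-\varepsilon/2})$, the boundary of $U^*(c(r))$ is a curve on which $|f|=c(r)$. The desired $\gamma$ can be obtained by following the outer boundary of the nested family $\{U^*(c(r))\}_{r\geq r_0}$ as $r\to\infty$: the monotonicity of $c(r)$ together with the fact that the bounded components of $\{|f|>c\}$ stay bounded ensures that this diagonal path actually tends to $\infty$, and along it $|f(z)|\geq c(|z|)\geq \exp(|z|^{1/2-\varepsilon})$ for $|z|$ sufficiently large.
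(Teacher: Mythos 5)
The paper gives no proof of this lemma --- it is cited from Eremenko's 1981 paper --- so I can only evaluate your proposal on its own terms, and there is a genuine gap in the central quantitative step. The estimate $\log|f(z)|\geq\pi\int_{r_0}^{|z|}\frac{dt}{t\theta(t)}-C$ that you write down is not what Ahlfors distortion yields along a path: it is off by an exponential. The correct conclusion --- obtained by mapping the (filled) tract conformally onto a half-plane by $\Phi$, writing the positive harmonic function $v\circ\Phi$ (with $v=\log|f|-p\log|z|-\log K$) via the Herglotz representation as $a\operatorname{Im}\zeta$ plus a Poisson integral, and then invoking the Ahlfors distortion lower bound for $\operatorname{Im}\Phi^{-1}$ --- is of the form $v(z)\geq c\exp\bigl(\pi\int_{r_0}^{|z|}\frac{dt}{t\theta(t)}\bigr)$ along the path $\Phi(i(0,\infty))$, provided $a>0$. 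With this corrected form the trivial bound $\theta\leq 2\pi$ already gives $v(z)\geq c'|z|^{1/2}$ on that path, and the lemma follows at once; no dichotomy is needed. The dichotomy you then introduce cannot rescue the too-weak estimate: if $\theta(r)$ is merely bounded away from $2\pi$ (say $\theta\leq\pi$), then $\pi\int_{r_0}^r\frac{dt}{t\theta(t)}$ is still only of order $\log r$, so your bound on $\log|f|$ gives only power growth of $|f|$, nowhere near $\exp(r^{1/2-\varepsilon})$. Making the integral itself reach $r^{1/2-\varepsilon}$ would require $\theta(t)\lesssim t^{-1/2+\varepsilon}$, a vastly stronger condition that need not hold and has nothing to do with $\theta$ being ``substantially smaller than $2\pi$''; the other branch (``$\theta$ close to $2\pi$'') is left vague and, as written, produces no lower bound for $|f|$ \emph{along a curve in $U$} rather than for the maximum modulus.

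The proposal also leaves unaddressed the genuine technical work hiding in the corrected argument: one must show that the Herglotz coefficient $a$ is positive (this uses that $v$ is harmonic across $\partial U$ together with control of the boundary correspondence under $\Phi$, which may fail if $\partial U$ is not locally connected), and one must handle multiply-connected $U$ carefully, since filling the holes changes the sign structure of $v$. Finally, the level-set construction at the end is not a proof: uniqueness of the unbounded component $U^*(c)$ does not follow from the direct-tract property alone --- a single direct tract at one level may split into several at a higher level --- and ``following the outer boundary of the nested family'' is not a rigorous construction of a curve tending to infinity.
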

Finally we shall use  the following result of Toda~\cite[Lemma~6]{Toda1993}.
\begin{lemma} \label{la-th2-4}
Let $A$ and $E$ be 
entire functions satisfying~\eqref{BL}.
Suppose that  $\lambda(E)<\rho(E)$. Then
$\mu(E)=\rho(E)=\mu(A)=\rho(A)$, and these numbers are equal to an integer
or~$\infty$.
\end{lemma}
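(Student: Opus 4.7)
The plan is to exploit Hadamard factorization: since $\lambda(E)<\rho(E)$, the growth of $E$ must be driven by its exponential factor, not by its zeros. Assume first that $\rho(E)<\infty$; the infinite-order case then follows by a limiting variant. Write $E=\Pi e^Q$, where $\Pi$ is the canonical product over the zeros of $E$ (so $\rho(\Pi)=\lambda(E)$) and $Q$ is a polynomial. Since $\lambda(E)<\rho(E)$, the factor $e^Q$ must realize the order, forcing $q:=\deg Q=\rho(E)\in\N$.

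Next I translate \eqref{BL} into quantitative information about $A$. Setting $g=E'/E=\Pi'/\Pi+Q'$ and using $E''/E=g'+g^2$, the Bank--Laine equation becomes
\begin{equation*}
4A \;=\; -2g' - g^2 - \frac{1}{E^2}.
\end{equation*}
The piece $-g^2$ contains the dominant polynomial $-(Q')^2$ of degree $2q-2$; every other contribution on the right is either polynomial or built from $\Pi'/\Pi$ and its derivative, and therefore has order at most $\lambda(E)$ by the lemma on the logarithmic derivative applied to $\Pi$. In particular, the only summand capable of producing growth of order exceeding $\lambda(E)$ is $-1/E^2$, from which the bound $\rho(A)\leq\rho(1/E^2)=q$ follows routinely via Nevanlinna characteristics.

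To obtain matching lower orders I exploit the leading coefficient of $Q$. Writing $Q(z)=a_q z^q+\ldots$, one has $\re Q(re^{i\theta})=|a_q|\,r^q\cos(q\theta+\arg a_q)+O(r^{q-1})$. For each $r$ pick angles $\theta_+(r)$ and $\theta_-(r)$ with $\cos(q\theta_\pm+\arg a_q)=\pm 1$; these always exist. A standard minimum-modulus estimate for canonical products of order $\lambda(E)<q$ (Cartan--Boutroux, cf.~\cite[Chapter~I]{Lev}) gives $\log|\Pi(z)|\geq-|z|^{\lambda(E)+\varepsilon}$ outside a collection of exceptional disks whose radii satisfy $\sum r_j/|z_j|<\infty$. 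Since the forbidden directions on $|z|=r$ form a set of small angular measure, one may perturb $\theta_\pm(r)$ by $o(1)$ to land in a permitted direction while keeping $\cos(q\theta_\pm+\arg a_q)=\pm 1+o(1)$. Along such a perturbed $\theta_+(r)$ one gets $\log|E(re^{i\theta_+(r)})|\geq(|a_q|-o(1))r^q$, yielding $\mu(E)\geq q$. Along $\theta_-(r)$ the term $1/E^2$ dominates all others in the displayed formula for $4A$, producing $\log|A(re^{i\theta_-(r)})|\geq(2|a_q|-o(1))r^q$ and hence $\mu(A)\geq q$. Combined with $\mu(E)\leq\rho(E)=q$ and $\mu(A)\leq\rho(A)\leq q$, this delivers the chain $\mu(E)=\rho(E)=\mu(A)=\rho(A)=q\in\N$.

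The case $\rho(E)=\infty$ is handled identically after replacing Hadamard by Weierstrass factorization $E=\Pi e^Q$ with $Q$ entire of infinite order: the same ray analysis applied to maximal/minimal values of $\re Q(re^{i\theta})$ now forces $\mu(E)=\mu(A)=\infty$ as well. The principal obstacle throughout is the minimum-modulus step for $\Pi$: one must verify that for \emph{every} sufficiently large $r$ (not merely for $r$ outside a thin exceptional set) the angular arc around $\theta_\pm(r)$ is long enough to contain a permissible direction. This is precisely where the strict inequality $\lambda(E)<\rho(E)$ is essential, as it provides the slack needed to absorb the angular perturbation within a cost of $o(r^q)$.
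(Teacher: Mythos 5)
The paper does not prove this lemma: it is quoted from Toda \cite[Lemma~6]{Toda1993} and used as a black box, so there is no in-paper argument to compare your proposal against. Taking it on its own terms, the finite-order case is essentially sound. The Hadamard factorization $E=\Pi e^{Q}$ with $q:=\deg Q=\rho(E)\in\N$, the Nevanlinna-characteristic bound $\rho(A)\leq q$ from the identity $4A=-2g'-g^{2}-1/E^{2}$, and the two ray-based lower bounds $\mu(E)\geq q$ and $\mu(A)\geq q$ (after perturbing $\theta_{\pm}$ by $o(1)$ to avoid the Cartan exceptional disks, whose total angular measure on $|z|=r$ can be made $O(r^{-M})$ for any prescribed $M$) do fit together, and you correctly flag the minimum-modulus step as the one needing care; it does go through.

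The infinite-order case, however, is not ``handled identically.'' When $\rho(E)=\infty$ the exponential factor $Q$ is a transcendental entire function (of arbitrary, possibly finite, order -- your phrase ``with $Q$ entire of infinite order'' is already off), and there is no degree $q$ or leading coefficient $a_{q}$: the expansion $\re Q(re^{i\theta})=|a_{q}|r^{q}\cos(q\theta+\arg a_{q})+O(r^{q-1})$ on which your whole ray analysis rests simply does not exist. The conclusion $\mu(E)=\infty$ is still true, but for a different and in fact simpler reason: $\log M(r,e^{Q})=\max_{\theta}\re Q(re^{i\theta})$, Borel--Carath\'eodory gives $\max_{\theta}\re Q(re^{i\theta})\geq \tfrac12 M(r/2,Q)-O(1)$, and $M(r/2,Q)$ is superpolynomial because $Q$ is transcendental, so no ray is needed at all. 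What is genuinely missing is the claim $\mu(A)=\rho(A)=\infty$. Mimicking your finite-order step would require $1/|E|^{2}$ to dominate $|2g'+g^{2}|$ along the direction where $\re Q$ is most negative, which amounts to comparing $M(r/2,Q)$ against $\log M(r,Q')$; unlike the polynomial comparison $r^{q}$ versus $r^{q-1}$, this is not automatic for a general transcendental $Q$, and you give no argument for it. As written, the infinite-order case is asserted, not proved, and it needs a separate treatment rather than a ``limiting variant.''
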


\begin{proof}[Proof of Theorem~\ref{thm2}]
Let $A$ and $E$ be as in the statement of the theorem.
As noted in the introduction, we have $\mu(A)\geq N/2$
by the Denjoy--Carleman--Ahlfors Theorem.
We may also assume that $\lambda(E)<\infty$. By Lemma~\ref{la-th2-4} we then have $\rho(E)<\infty$.

Let $U_1,\dots,U_N$ be unbounded components of $\{z\in\C\colon |A(z)|> K|z|^p\}$.
Then there are curves $\sigma_1,\dots,\sigma_N$ tending to $\infty$ ``between'' these components
such that $|A(z)|=K|z|^p$ for $z\in\sigma_j$, for $j=1,\dots,N$.
We may assume that the $U_j$ and the $\sigma_j$ are numbered such that for
large $r$ there exist $\varphi_1,\dots,\varphi_N$ and $\theta_1,\dots,\theta_N$
satisfying $\varphi_1<\theta_1<\varphi_2<\dots<\varphi_N<\theta_N<\varphi_1+2\pi$
such that $re^{i\varphi_j}\in U_j$ and $re^{i\theta_j}\in \sigma_j$ for $j=1,\dots,N$. 

Let $\beta\in(0,1/2)$ and $j\in\{1,\dots,N\}$.
By Lemma~\ref{la-th2-3} 
there exists a curve $\gamma_j$ tending to $\infty$ in $U_j$ such that
\begin{equation} \label{4b1}
|A(z)|\geq \exp\!\left(|z|^{\beta}\right)
\quad\text{for } z\in\gamma_j.
\end{equation}

We denote by $\{z_k\}$ the set of zeros of $E$ and $E'$ in $\C\backslash\{0\}$,
choose $\alpha\in(0,\beta)$, and put
\[
X= \bigcup_k D\!\left(z_k, \exp\!\left(-|z_k|^{\alpha}\right)\right).
\]
Denote by $n(r)$ be the number of $z_k$ of modulus at most~$r$ and let $c>\rho(E)$.
Since $\rho(E)=\rho(E')$ we have $n(r)\leq r^c$ for large~$r$.

If one of the disks $D\!\left(z_k, \exp\!\left(-|z_k|^{\alpha}\right)\right)$ forming $X$
intersects the annulus 
\[
Q=\left\{z\in\C\colon \frac{r}{2}\leq |z|\leq 2r\right\},
\]
then $r/3<|z_k|<3r$, provided 
$r$ is sufficiently large. So there are at most $n(3r)$ such disks, and each of 
them has diameter at most $2\exp\!\left(-(r/3)^{\alpha}\right)$.
Noting that $n(3r)\leq (3r)^c$ for large $r$ we find that
the sum of the diameters of the components of $X$ that intersect $Q$
is at most $2(3r)^c\exp\!\left(-(r/3)^{\alpha}\right)$.
Since $\alpha< 1/2$ we have
\[
2(3r)^c\exp\!\left(-\left(\frac{r}{3}\right)^{\alpha}\right)
\leq \exp\!\left(-\frac{1}{2}r^\alpha\right)
\leq \frac{r}{2}
\]
for large~$r$.
This implies that for all large $r$ there exists $R\in (3r/2,2r)$ such that 
$\partial D(0,R)\subset \C\backslash X$. Moreover, $\C\backslash X$ contains
a line segment connecting $\partial D(0,r)$ and $\partial D(0,R)$. Inductively
we thus find a sequence $(R_k)$ satisfying $3R_k/2<R_{k+1}<2R_k$ and thus
tending to infinity such that  $\C\backslash X$ 
contains $\partial D(0,R_k)$ and a line segment connecting $\partial D(0,R_k)$
with $\partial D(0,R_{k+1})$, for all~$k\in\N$.

We conclude that
$\C\backslash X$ has exactly one unbounded component. We denote this component
by~$P$ and put $Y=\C\backslash P$. Then $X\subset Y$ and 
$\partial Y\subset \partial X$.
The above estimate of the sum of diameters of components of $X$ 
also shows that if $|z|$ is large enough
\begin{equation} \label{4d}
\meas\left\{t\in \left(0,\frac{|z|}{2}\right]\colon \partial D(z,t)\cap Y\neq\emptyset\right\}
\leq \exp\!\left(-\frac{1}{2}|z|^\alpha\right).
\end{equation}

Next, we deduce from~\eqref{4a1}, applied to $f=E$ and $f=E'$, that 
\begin{equation} \label{CX}
\begin{aligned}
\left|-2\frac{E''(z)}{E(z)}+\left(\frac{E'(z)}{E(z)}\right)^2\right|
&\leq  
2\left|\frac{E''(z)}{E'(z)}\right|\cdot\left|\frac{E'(z)}{E(z)}\right|+\left|\frac{E'(z)}{E(z)}\right|^2 
\\ &
\leq  
3\exp\!\left(2(3|z|)^{\alpha}\right)
\quad\text{if }z\in \C\backslash X
\end{aligned}
\end{equation}
and hence, in particular, if $z\in P$, provided $|z|$ is sufficiently large.

It follows from~\eqref{BL}, \eqref{4b1} and the last estimate 
that if $z\in\gamma_j\cap P$ and $|z|$ is large, then
\[
\frac{1}{|E(z)|^2}
\geq 4 \exp\!\left(|z|^{\beta}\right)
-3\exp\!\left(2(3|z|)^{\alpha}\right)
\geq \exp\!\left(|z|^{\beta}\right)
\]
and hence 
\begin{equation}\label{4e}
|E(z)|\leq \exp\!\left(-\frac12 |z|^{\beta}\right)
\quad\text{for } z\in\gamma_j\cap P,
\end{equation}
provided $|z|$ is sufficiently large.
We use Lemma~\ref{la-th2-2}
to estimate $E(z)$ for the points $z$ on $\gamma_j$ which are not in $\gamma_j\cap P$.
For such a point~$z$, put $R=|z|/2$
and let $G$ be the  the component of $D(z,R)\backslash (\gamma_j\cap P)$
which contains~$z$.
Since $\gamma_j$ connects $z$ with $\partial D(z,R)$,
we see that $\partial D(z,t)$ intersects $\gamma_j$ for all $t\in (0,R)$.
On the other hand, \eqref{4d} says that the set of all $t\in (0,R)$
for which $\partial D(z,t)$ intersects $Y$ has measure at most 
$\exp\!\left(-|z|^\alpha/2\right)$.
Hence the set of all $t\in (0,R)$ for which $\partial D(z,t)$ intersects $\gamma_j\backslash Y=\gamma_j\cap P$
has measure at least $R-\exp\!\left(-|z|^\alpha/2\right)$.
Since $D(z,R)\backslash G\supset D(z,R)\cap \gamma_j\cap P$
we conclude 
that $G$ satisfies the hypothesis of Lemma~\ref{la-th2-2} with 
$\Theta=\exp\!\left(-|z|^\alpha/2\right)/R$.

We choose $w(\zeta)=E(\zeta)\exp(-|z|^c)$.
Since $c>\rho(E)$ we have $|w(\zeta)|<1$ for $\zeta\in G$, provided $|z|$ is large enough.
If $\xi\in\partial G$ and $|\xi-z|<R$, then 
$\xi\in \gamma_j\cap P$.
Thus, by~\eqref{4e},
\begin{align*}
\limsup_{\zeta\to \xi}|w(\zeta)|
&=|E(\xi)|\exp\!\left(-|z|^c\right)
\leq \exp\!\left(-\frac12 |\xi|^{\beta}\right) \exp\!\left(-|z|^c\right)
\\ &
\leq \exp\!\left(-\frac12 \left(\frac{|z|}{2}\right)^\beta-|z|^c\right) 
\leq \exp\!\left(-\frac14 |z|^\beta-|z|^c\right) 
\end{align*}
for all $\xi\in\partial G$ satisfying $|\xi-z|<R$.
We may thus apply Lemma~\ref{la-th2-2} with $\delta= \exp\!\left(-|z|^\beta/4-|z|^c\right)$.
We deduce from~\eqref{4b} and~\eqref{4c} that
\[
\log|E(z)|-|z|^c
= \log|w(z)|
\leq M \log\delta
\leq -\left(1-\frac{4}{\pi}\sqrt{\Theta}\right)\left(\frac14 |z|^\beta+|z|^c\right)
\]
and hence
\[
\log|E(z)|
\leq
- \frac14 \left(1-\frac{4}{\pi}\sqrt{\Theta}\right)|z|^\beta
+ \frac{4}{\pi}\sqrt{\Theta}|z|^c.
\]
Now
\[
\sqrt{\Theta}
=\frac{\exp\!\left(-|z|^\alpha/4\right)}{\sqrt{R}}
=\frac{\sqrt{2}\exp\!\left(-|z|^\alpha/4\right)}{\sqrt{|z|}}
\leq \frac{\pi}{64} |z|^{\beta-c}
\]
and
also $\sqrt{\Theta}\leq \pi/8$ for large~$|z|$.
We conclude that $|E(z)|\leq \exp\!\left(- |z|^{\beta}/16\right)$ if $z\in\gamma_j$ but
$z\notin \gamma_j\cap P$.
By~\eqref{4e} this estimate also holds for all other $z\in\gamma_j$ of sufficiently large modulus;
that is, we have
\begin{equation}\label{4f}
|E(z)|\leq \exp\!\left(-\frac{1}{16} |z|^{\beta}\right)
\quad\text{for } z\in\gamma_j \text{ with } |z| \text{ large}.
\end{equation}

We put
\[
Z=\bigcup_k D\!\left(z_k, |z_k|^{-c}\right).
\]
We note that since $c>\rho(E)\geq \lambda(E)$ we have~\cite[Chapter~2, Theorem 1.8]{GO}
\begin{equation}\label{4h1}
\sum_k \frac{1}{|z_k|^{c}}<\infty.
\end{equation}
By~\eqref{4a2}, applied to $E$ and~$E'$, we find as in~\eqref{CX} that
\begin{equation}\label{4g}
\left|-2\frac{E''(z)}{E(z)}+\left(\frac{E'(z)}{E(z)}\right)^2\right|
\leq  3|z|^{4c}
\quad\text{for } z\notin Z.
\end{equation}
Let $M\in\N$ with $M> \max\{2c,p/2\}$.
Together with~\eqref{BL} the last equation implies that if $z\in\sigma_j\backslash Z$, then 
\[
\frac{1}{|z|^{2M}|E(z)|^2}\leq K|z|^{p-2M}+ 3|z|^{4c-2M}=o(1)
\]
as $z\to\infty$. Thus $z^ME(z)\to \infty$ as $z\to\infty$ in $\sigma_j\backslash Z$,
for $j=1,\dots, N$.
Since $z^ME(z)\to 0$ as $z\to\infty$ in $\gamma_j$ for $j=1,\dots, N$ by~\eqref{4f},
we conclude that if $K>1$ is large, 
then $\{z\in\C\colon |z^ME(z)|>K\}$ has at least $N$ unbounded components.
Let $W_1,\dots,W_N$ be such components.

It follows from~\eqref{BL} and~\eqref{4g} that if $|z^ME(z)|>K$ and $z\notin Z$, then 
\[
|A(z)|\leq  \frac34 |z|^{4c} +\frac{1}{4K^2}|z|^{2M} \leq K|z|^{2M},
\]
provided $K$ is chosen large enough. Also,
for sufficiently large $K$ 
the component $U_j$ of $\{z\in\C\colon |A(z)|> K|z|^p\}$
contains a component $V_j$ of $\{z\in\C\colon |A(z)|> K|z|^{2M}\}$.
With $V=\bigcup_{j=1}^N V_j$ and $W=\bigcup_{j=1}^N W_j$
the above argument shows that
\begin{equation}\label{4h}
V\cap W
\subset Z.
\end{equation}
For an unbounded open set $D$ and $r>0$ 
we put
\[
\theta(r,D)=\meas\{t\in [0,2\pi]\colon re^{it}\in D\}.
\]
Then
\begin{equation}\label{4i1}
\theta(r,V)+\theta(r,W)
\leq 2\pi+\theta(r,Z)
\end{equation}
by~\eqref{4h}.
By~\eqref{4h1} we have
$\theta(r,Z)\to 0$ as $r\to\infty$.
It thus follows
that
\begin{equation}\label{4i}
\theta(r,V)+\theta(r,W)
\leq 2\pi+o(1)
\end{equation}
as $r\to\infty$.

The proof is now completed by a standard application of the Ahlfors distortion theorem;
cf.~\cite[Chapter XI, \S 4, no.~267]{NevanlinnaEAF}.
Choose $r_0>1$ so large that $\partial D(0,r_0)$ intersects all $V_j$ and $W_j$.
Then, for $r>r_0$,
\begin{align*}
\log\log M(r,A)
&\geq 
\log\log \max_{\substack{|z|=r\\z\in V_j}} \left|\frac{A(z)}{Kz^{2M}}\right| 
\geq \pi\int_{r_0}^r \frac{dt}{t\theta(t,V_j)} -O(1).
\end{align*}
By the Cauchy-Schwarz inequality we have
\[
N^2=\left(\sum_{j=1}^N \frac{\sqrt{t\theta(t,V_j)}}{\sqrt{t\theta(t,V_j)}}\right)^2
\leq \sum_{j=1}^N t\theta(t,V_j) \sum_{j=1}^N \frac{1}{t\theta(t,V_j)}
= t\theta(t,V) \sum_{j=1}^N \frac{1}{t\theta(t,V_j)}
\]
and
\[
\left(\log\frac{r}{r_0}\right)^2
= \left(\int_{r_0}^r \frac{1}{\sqrt{t\theta(t,V)}} \frac{\sqrt{\theta(t,V)}}{\sqrt{t}}dt\right)^2
\leq \int_{r_0}^r \frac{dt}{{t\theta(t,V)}} \cdot  \int_{r_0}^r \frac{{\theta(t,V)}}{t} dt
\]
so that
\begin{align*}
 \log\log M(r,A)
&\geq \frac{\pi}{N} \int_{r_0}^r \sum_{j=1}^N \frac{dt}{t\theta(t,V_j)} -O(1)
\\ &
\geq N\pi\int_{r_0}^r \frac{dt}{t\theta(t,V)}-O(1)
\geq N\pi\frac{\displaystyle \left(\log \frac{r}{r_0}\right)^2}{\displaystyle \int_{r_0}^r \frac{\theta(t,V)}{t}dt}-O(1).
\end{align*}
It follows that
\[
\frac{N \log r}{\log\log M(r,A)}\leq (1+o(1))\frac{1}{\pi\log r}\int_{r_0}^r \frac{\theta(t,V)}{t}dt.
\]
Noting that $\log\log M(r,E)=\log\log M(r,z^ME(z))+o(1)$ we obtain
\[
\frac{N \log r}{\log\log M(r,E)}\leq (1+o(1))\frac{1}{\pi\log r}\int_{r_0}^r \frac{\theta(t,W)}{t}dt
\]
by the same argument. Adding the last two inequalities and using~\eqref{4i} yields 
\[
\frac{N \log r}{\log\log M(r,A)}
+
\frac{N \log r}{\log\log M(r,E)}
\leq
2 +o(1),
\]
from which we deduce that
\begin{equation}\label{4j}
\frac{N}{\mu(A)}+\frac{N}{\rho(E)}\leq 2.
\end{equation}
Since $\mu(A)<N$,
this implies that $N/\rho(E)<1$ and thus 
$\rho(E)>N>\mu(A)$. Hence $\rho(E)=\lambda(E)$ by Lemma~\ref{la-th2-4},
and substituting this in~\eqref{4j} yields the conclusion.
\end{proof}

\section{Proof of Theorem~\ref{thm3}}\label{proof3}
\subsection{Preliminaries and outline of the construction} \label{introproof3}
If $N=1$, we can take the function $A$ constructed in Theorem~\ref{thm1}, since
-- as noted already in the introduction -- it satisfies $\rho(A)=\mu(A)$. Moreover,
the set $\{z \in \C\colon |A(z)| > K|z|^p\}$ clearly has one component.
In fact, the Denjoy-Carleman-Ahlfors Theorem
implies that it has exactly one component. Thus we may restrict to the case that $N\geq 2$.

We will use the map constructed in section~\ref{defG},
but with $\rho_0=\rho/N$ and $\sigma_0=\rho_0/(2\rho_0-1)$ instead of~$\rho$ and~$\sigma$.
Note that $\rho_0\in (1/2,1)$ since $\rho\in (N/2,N)$.
We also assume that  $m_1=m_2=0$.
We denote the resulting map by~$G_0$.
We summarize the properties of $G_0$ that we need.

Let $Q$ be the homeomorphism of the right half-plane onto itself 
as defined in section~\ref{defG}.
The exact definition of $Q$ is irrelevant here but we note that $Q(z)=z$ if $\re z>1$.
We denote by $J^+$ the preimage of the half-strip 
$\{z\in\C\colon \re z\geq 0,\; 0\leq \im z\leq 2\pi\}$ under $z\mapsto Q(z^{\rho_0})$ and by
$J^-$ the preimage of $\{z\in\C\colon \re z\leq 0,\; 0\leq \im z\leq 2\pi\}$ under 
$z\mapsto -(-z)^{\sigma_0}$.  With  $s_0=\log\log 2$ we then have
\begin{equation}\label{G0J}
G_0(z)=
\begin{cases}
\exp\exp( Q(z^{\rho_0})+s_0) & \text{if } z\in J^+ ,\\
\exp\exp( (-(-z)^{\sigma_0})+s_0) & \text{if } z\in J^- .
\end{cases}
\end{equation}
We note that $J:=J^+\cup J^-$ is bounded by the real axis  and a curve in the upper half-plane
which, as the real axis, is mapped to $(1,\infty)$ by~$G_0$.

In addition to~$G_0$, we will also consider a modification $G_1$ of~$G_0$ defined as follows.
Let $U$ and $V$ be as in  section~\ref{defG}.
For $\re z>0$ we define 
\[
U_1(z)=
\begin{cases}
U(z-\pi i) & \text{if } \im z\geq\pi ,\\
\exp(-\exp(z+s_0)) & \text{if } -\pi < \im z < \pi ,\\
U(z+\pi i) & \text{if } \im z\leq -\pi ,
\end{cases}
\]
and for $\re z<0$ we define 
\[
V_1(z)=
\begin{cases}
V(z-\pi i) & \text{if } \im z\geq\pi ,\\
\exp(-\exp(z+s_0)) & \text{if } -\pi < \im z < \pi ,\\
V(z+\pi i) & \text{if } \im z\leq -\pi.
\end{cases}
\]
The map $G_1$ is then obtained by gluing $U_1$ and $V_1$ in the same way in which
$U$ and $V$ were glued to obtain~$G_0$. 
Thus we obtain
\begin{equation}\label{G1J}
G_1(z)=
\begin{cases}
\exp\!\left(-\exp(Q_1(z^{\rho_0})+s_0)\right) & \text{if } z\in J^+ ,\\
\exp\!\left(-\exp(-(-z)^{\sigma_0})+s_0)\right) & \text{if } z\in J^- ,
\end{cases}
\end{equation}
for some quasiconformal homeomorphism $Q_1$ of the right half-plane $H^+$ 
which satisfies $Q_1(\overline{z})=\overline{Q_1(z)}$ for $z\in H^+$
and $Q_1(z)=z$ for $\re z\geq 1$.
Recall that $Q$ had to be chosen to satisfy $Q(\pm iy)=\pm i g(y)=\pm iy^\gamma$
for $0\leq y\leq 2\pi$. Similarly, it is required that 
$Q_1(\pm iy)=\pm iy^\gamma$ for $0\leq y\leq 2\pi$.
This implies that $Q_1$ can be chosen such that $Q_1(z)=Q(z)$ for $|\im z|\leq 2\pi$.
It follows that  $G_1(z)=1/G_0(z)$  for $z\in J$.

For $j=1,\dots,2N$ we put
\begin{equation}\label{Sigma_j}
\Sigma_j=\left\{ z\in\C\colon (j-1)\frac{\pi}{N}<\arg z< j\frac{\pi}{N}\right\}.
\end{equation}
Let $H=\{z\in\C\colon\im z>0\}$ be the upper half-plane
and $L=H\backslash J$.
Denote by $\overline{L}$ the reflection of $L$ on the real axis; that is,
$\overline{L}=\{z\in\C\colon\overline{z}\in L\}$. Denote by $D_j$ the preimage
of $L$ or $\overline{L}$ under the map $z\mapsto z^N$ in $\Sigma_j$, for $j=1,\dots,2N$.
We will define a quasiregular map $G\colon\C\to\C$ which 
satisfies $G(z)=G_0(z^N)$ for $z\in D_j$ 
if $2\leq j\leq 2N-1$ and $G(z)=G_1(z^N)$ for $z\in D_1$ and $z\in D_{2N}$.
In the remaining part of the plane we will define $G$ by a suitable interpolation 
which will require only~\eqref{G0J} and~\eqref{G1J}.

In order to do so, we will first define $G$ in section~\ref{interpolation-infty}
in a neighborhood of~$\infty$. 
Thus, for a suitable $r_0>0$, we have to define
$G$ in certain neighborhoods of the rays $\{z\in\C\colon \arg z=j\pi /N,\, |z|>r_0\}$.
This interpolation is comparatively easy if $j\neq 1$ and
$j\neq 2N-1$ since in this case $G$ is defined by the same expression in 
the domains $D_j$ and $D_{j+1}$ adjacent to the ray. (Here we have put $D_{2N+1}=D_1$.
In similar expressions the index $j$ will also be taken modulo~$2N$.)
For $j=1$ and $j=2N-1$ the interpolation argument is more elaborate.
Next, in section~\ref{extension-bounded},
we will extend $G$ to the bounded region that remains.

Using that  $G_0$ and $G_1$ satisfy the hypothesis of the 
Teich\-m\"ul\-ler--Wit\-tich--Be\-linskii Theorem we will then show in section~\ref{estimate-dilatation}
that this is also the case for 
$G$ so that we again have~\eqref{FG} with an entire function $F$ and
a quasiconformal map $\tau\colon\C\to\C$.
As in the proof of Theorem~\ref{thm1} we will then define $E$ by $E=F/F'$ and 
$A$ by~\eqref{BL} and show in section~\ref{completion} that these functions have the required properties.

\begin{remark}
The function $G_1$ is introduced only to obtain a special Bank--Laine 
function~$E$; that is, to obtain that one of the two solutions of~\eqref{w''+Aw} whose
product is $E$ has no zeros. If we use $1/G_0$ instead of $G_1$, then both solutions
have zeros, but~\eqref{eqN} is still satisfied.
\end{remark}

\subsection{Interpolation near $\infty$} \label{interpolation-infty}
Let $\gamma_2$ be the curve forming the boundary of~$L$. We may parametrize it as
$\gamma_2\colon\R\to\C$,
\[
\gamma_2(t)=
\begin{cases}
-(-(t+2\pi i))^{1/\sigma_0}) & \text{if } t\leq 0 ,\\
Q^{-1}(t+2\pi i)^{1/\rho_0}  & \text{if } t>0 .\\
\end{cases}
\]
We shall also need the curve $\gamma_1$ in the ``middle'' between $\gamma_2$ and the real axis; 
that is, $\gamma_1\colon\R\to\C$,
\[
\gamma_1(t)=
\begin{cases}
-(-(t+i\pi))^{1/\sigma_0}) & \text{if } t\leq 0 ,\\
Q^{-1}(t+i\pi)^{1/\rho_0}  & \text{if } t>0 ,\\
\end{cases}
\]
see Figure~\ref{gamma01}.
Then $G_0$ maps $\gamma_1$ to the interval $(0,1)$ and 
$G_0(\gamma_1(t))\to 0$ as $t\to+\infty$ while $G_0(\gamma_1(t))\to 1$ as $t\to -\infty$.
\begin{figure}[!htb]
\captionsetup{width=.85\textwidth}
\centering
\begin{tikzpicture}[scale=0.8,>=latex](-10,-10)(-10,10)
\clip (-7,-0.2) rectangle (7,16);
  \def\xmin{-7}
  \def\xmax{7}
  \def\ymin{-0.2}
  \def\ymax{15.1}
\def\pi{3.14159}
\def\rho{0.75}
\def\sigma{1.5}
\draw[-] (\xmin,0) -- (\xmax,0);
 \draw[-] (0,\ymin) -- (0,\ymax);
\draw[black, samples=40, domain=0.001:1] plot ({(\x^2+(\x*\pi+(1-\x)*\pi^(2*\rho-1))^2)^(1/(2*\rho))*cos(atan((\x*\pi+(1-\x)*\pi^(2*\rho-1))/\x)/\rho)},{(\x^2+(\x*\pi+(1-\x)*\pi^(2*\rho-1))^2)^(1/(2*\rho))*sin(atan((\x*\pi+(1-\x)*\pi^(2*\rho-1))/\x)/\rho)});
\draw[black, samples=40, domain=0.001:1] plot ({(\x^2+(\x*2*\pi+(1-\x)*(2*\pi)^(2*\rho-1))^2)^(1/(2*\rho))*cos(atan((\x*2*\pi+(1-\x)*(2*\pi)^(2*\rho-1))/\x)/\rho)},{(\x^2+(\x*2*\pi+(1-\x)*(2*\pi)^(2*\rho-1))^2)^(1/(2*\rho))*sin(atan((\x*2*\pi+(1-\x)*(2*\pi)^(2*\rho-1))/\x)/\rho)});
\draw[black, samples=40, domain=1:0.8*\xmax] plot ({(\x^2+\pi^2)^(1/(2*\rho))*cos(atan(\pi/\x)/\rho)},{(\x^2+\pi^2)^(1/(2*\rho))*sin(atan(\pi/\x)/\rho)});
\draw[black, samples=40, domain=1:0.8*\xmax] plot ({(\x^2+4*\pi^2)^(1/(2*\rho))*cos(atan(2*\pi/\x)/\rho)},{(\x^2+4*\pi^2)^(1/(2*\rho))*sin(atan(2*\pi/\x)/\rho)});
\draw[black, samples=40, domain=1.1447:0.8*\xmax] plot ({(\x^2+(\pi-\pi*asin(\pi/exp(\x))/180)^2)^(1/(2*\rho))*cos(atan((\pi-\pi*asin(\pi/exp(\x))/180)/\x)/\rho)},{(\x^2+(\pi-\pi*asin(\pi/exp(\x))/180)^2)^(1/(2*\rho))*sin(atan((\pi-\pi*asin(\pi/exp(\x))/180)/\x)/\rho)});
\draw[black, samples=40, rotate=180, domain=0.001:2.9*\xmax] plot ({(\x^2+\pi^2)^(1/(2*\sigma))*cos(atan(\pi/\x)/\sigma)},{-(\x^2+\pi^2)^(1/(2*\sigma))*sin(atan(\pi/\x)/\sigma)});
\draw[black, samples=40, rotate=180, domain=0.001:2.9*\xmax] plot ({(\x^2+4*\pi^2)^(1/(2*\sigma))*cos(atan(2*\pi/\x)/\sigma)},{-(\x^2+4*\pi^2)^(1/(2*\sigma))*sin(atan(2*\pi/\x)/\sigma)});
\draw[black,-] (0,0) -- ({1.1447^2+(\pi-\pi*asin(\pi/exp(1.1447))/180)^2)^(1/(2*\rho))*cos(atan((\pi-\pi*asin(\pi/exp(1.1447))/180)/1.1447)/\rho)},{(1.1447^2+(\pi-\pi*asin(\pi/exp(1.1447))/180)^2)^(1/(2*\rho))*sin(atan((\pi-\pi*asin(\pi/exp(1.1447))/180)/1.1447)/\rho)});
\node at (0.2,5.7) [right] {$\gamma_1$};
\node at (0.2,13.6) [right] {$\gamma_2$};
\node at (1.,4.) [right] {$\gamma_0$};
\draw [black] plot  [smooth,tension=0.7] coordinates {(1.65,4.8) (1.4,5.2) (1.3,5.6)};
\draw [black] plot  [smooth,tension=0.7] coordinates {(-4,0) (-4.0,0.5) (-4.1,1.04) };
\node at (-6.,0.5) [right] {$\Omega_l$};
\node at (-4.1,0.5) [right] {$\gamma_l$};
\node at (-2.,0.8) [right] {$\Omega_m$};
\node at (1.4,5.4) [right] {$\Omega_r$};
\node at (0.85,5.0) [right] {$\gamma_r$};
\end{tikzpicture}
\caption{The curves $\gamma_0$, $\gamma_1$ and $\gamma_2$ for $\rho_0=3/4$ and $\sigma_0=3/2$.}
\label{gamma01}
\end{figure}

We note that if $x_l<0$ and $\gamma_l\colon [0,\pi]\to \C$, $\gamma_l(t)=-(-(x_l+it))^{1/\sigma_0}$,
then the domain $\Omega_l$ which is to the left of $\gamma_l$ and between $\gamma_1$ and the 
negative real axis is mapped 
by $z\mapsto -(-z)^{\sigma_0}$
to the half-strip
$$P=\{z\in\C\colon \re z < x_l\text{ and }0<\im z<\pi\}.$$
Choosing $x_l=\log\varepsilon-s_0$, for sufficiently small
$\varepsilon>0$, we see that $\Omega_l$ is mapped
univalently onto the half-disk
$\{z\in\C\colon |z|<\varepsilon,\; \im z>0\}$ 
by the function $z\mapsto \exp \!\left(-(-z)^{\sigma_0}+s_0\right)$
and hence  univalently onto some half-neighborhood of $1$ by~$G_0$.
Here $G_0(\gamma_l(0))=e^\varepsilon$ and
$G_0(\gamma_l(\pi))=e^{-\varepsilon}$.

Similarly, we now define a curve $\gamma_0$  ``below'' $\gamma_1$ and an arc
$\gamma_r$ connecting $\gamma_0$ and $\gamma_1$ in the domain between them
such that the domain $\Omega_r$ between  $\gamma_0$ and $\gamma_1$ and to the 
right of $\gamma_r$ is mapped  univalently onto some half-neighborhood of $0$ by~$G_0$.
In order to do so, we define $\gamma_0\colon\R\to\C$ by $\gamma_0(t)=t$ for $t\leq 0$ and 
\[
\gamma_0(t)=\left( t+i\pi -i\arcsin\!\left(\frac{\pi}{e^{t+s_0}}\right)\right)^{1/\rho_0}
\quad\text{for } t\geq 1.
\]
For $0<t<1$ we define $\gamma_0$ in such a way that the curve $\gamma_0$ is below
the curve~$\gamma_1$; see Figure~\ref{gamma01}.
We denote by $\Omega$ the domain between the curves $\gamma_0$ and $\gamma_1$.

We find that $G_0(\gamma_0(t))$ is real and negative for $t\geq 1$ and $G_0(\gamma_0(t))\to 0$ as $t\to \infty$.
Moreover, if $x_r>0$ is large and $\kappa$ is an arc connecting $\gamma_0$ and $\gamma_1$
in $\Omega\cap\{z\colon\re x>x_r\}$,
then the image of $\kappa$ under the function $z\mapsto \exp\!\left(z^{\rho_0}+s_0\right)$
 is an arc connecting the real axis with the line $\{z\in\C\colon \im z=\pi\}$.
Hence $G_0\circ \kappa$ is an arc which is contained in the intersection of the upper half-plane with a small
neighborhood of $0$ and which connects a point on the negative real axis with a point
on the positive real axis. 
In fact, for $\varepsilon>0$ sufficiently small
there exists an arc $\gamma_r$ connecting $\gamma_0$ and $\gamma_1$ such that 
$\Omega_r$ 
is mapped univalently onto the half-disk
$\{z\in\C\colon |z|<\varepsilon ,\; \im z>0\}$ by~$G_0$.
Here the common point of $\gamma_0$ and $\gamma_r$ is mapped to $-\varepsilon$
by $G_0$ while the common point of $\gamma_1$ and $\gamma_r$ is mapped to~$\varepsilon$;
cf.\ Figures~\ref{gamma01} and~\ref{boundaryA}.

\begin{figure}[!htb]
\captionsetup{width=.85\textwidth}
\centering
\begin{tikzpicture}[scale=0.47,>=latex](-10,-15)(-10,16)
\clip (-8.9,-15) rectangle (8.9,16);
  \def\xmin{-7}
  \def\xmax{7}
  \def\ymin{-0.2}
  \def\ymax{15.1}
\def\pi{3.14159}
\def\rho{0.75}
\def\sigma{1.5}
\draw[-] (-9,0) -- (9,0);
\draw[-] (0,-8) -- (0,8);
\fill[pattern=north west lines,pattern color=black!50, samples=40, domain=0.001:1,variable=\x] (-7,8) -- plot ({(\x^2+(\x*\pi+(1-\x)*\pi^(2*\rho-1))^2)^(1/(2*\rho))*cos(atan((\x*\pi+(1-\x)*\pi^(2*\rho-1))/\x)/\rho)},{(\x^2+(\x*\pi+(1-\x)*\pi^(2*\rho-1))^2)^(1/(2*\rho))*sin(atan((\x*\pi+(1-\x)*\pi^(2*\rho-1))/\x)/\rho)}) -- (1,8) -- cycle;
\draw[black,ultra thick, dashed,samples=40, domain=0.001:1] plot ({(\x^2+(\x*\pi+(1-\x)*\pi^(2*\rho-1))^2)^(1/(2*\rho))*cos(atan((\x*\pi+(1-\x)*\pi^(2*\rho-1))/\x)/\rho)},{(\x^2+(\x*\pi+(1-\x)*\pi^(2*\rho-1))^2)^(1/(2*\rho))*sin(atan((\x*\pi+(1-\x)*\pi^(2*\rho-1))/\x)/\rho)});
\draw[black, samples=40, domain=0.001:1] plot ({(\x^2+(\x*\pi+(1-\x)*\pi^(2*\rho-1))^2)^(1/(2*\rho))*cos(atan((\x*\pi+(1-\x)*\pi^(2*\rho-1))/\x)/\rho)},{(\x^2+(\x*\pi+(1-\x)*\pi^(2*\rho-1))^2)^(1/(2*\rho))*sin(atan((\x*\pi+(1-\x)*\pi^(2*\rho-1))/\x)/\rho)});
\fill[pattern=north east lines,pattern color=black!50, samples=40, domain=0.001:1]  (-7,-8) -- plot ({(\x^2+(\x*\pi+(1-\x)*\pi^(2*\rho-1))^2)^(1/(2*\rho))*cos(atan((\x*\pi+(1-\x)*\pi^(2*\rho-1))/\x)/\rho)},{-((\x^2+(\x*\pi+(1-\x)*\pi^(2*\rho-1))^2)^(1/(2*\rho))*sin(atan((\x*\pi+(1-\x)*\pi^(2*\rho-1))/\x)/\rho))})  -- (1,-8) -- cycle;
\draw[black, samples=40, domain=0.001:1] plot ({(\x^2+(\x*\pi+(1-\x)*\pi^(2*\rho-1))^2)^(1/(2*\rho))*cos(atan((\x*\pi+(1-\x)*\pi^(2*\rho-1))/\x)/\rho)},{-((\x^2+(\x*\pi+(1-\x)*\pi^(2*\rho-1))^2)^(1/(2*\rho))*sin(atan((\x*\pi+(1-\x)*\pi^(2*\rho-1))/\x)/\rho))});
\fill[pattern=north west lines,pattern color=black!50, samples=40, domain=1:0.284*\xmax,variable=\x]  (1.0,8) -- plot ({(\x^2+\pi^2)^(1/(2*\rho))*cos(atan(\pi/\x)/\rho)},{(\x^2+\pi^2)^(1/(2*\rho))*sin(atan(\pi/\x)/\rho)}) -- (3,8) -- cycle;
\draw[black,ultra thick, dashed, samples=40, domain=1:0.284*\xmax] plot ({(\x^2+\pi^2)^(1/(2*\rho))*cos(atan(\pi/\x)/\rho)},{(\x^2+\pi^2)^(1/(2*\rho))*sin(atan(\pi/\x)/\rho)});
\draw[black, samples=40, domain=1:0.284*\xmax] plot ({(\x^2+\pi^2)^(1/(2*\rho))*cos(atan(\pi/\x)/\rho)},{(\x^2+\pi^2)^(1/(2*\rho))*sin(atan(\pi/\x)/\rho)});
\fill[pattern=north east lines,pattern color=black!50, samples=40, domain=1:0.284*\xmax]  (1.0,-8) -- plot ({(\x^2+\pi^2)^(1/(2*\rho))*cos(atan(\pi/\x)/\rho)},{-(\x^2+\pi^2)^(1/(2*\rho))*sin(atan(\pi/\x)/\rho)})  -- (3,-8) -- cycle;
\draw[black, samples=40, domain=1:0.284*\xmax] plot ({(\x^2+\pi^2)^(1/(2*\rho))*cos(atan(\pi/\x)/\rho)},{-(\x^2+\pi^2)^(1/(2*\rho))*sin(atan(\pi/\x)/\rho)});
\fill[pattern=north west lines,pattern color=black!50, samples=40, domain=2.0247:0.8*\xmax]  (5.0,8) -- plot ({(\x^2+(\pi-\pi*asin(\pi/exp(\x))/180)^2)^(1/(2*\rho))*cos(atan((\pi-\pi*asin(\pi/exp(\x))/180)/\x)/\rho)},{(\x^2+(\pi-\pi*asin(\pi/exp(\x))/180)^2)^(1/(2*\rho))*sin(atan((\pi-\pi*asin(\pi/exp(\x))/180)/\x)/\rho)})  -- (10,8) -- cycle;
\draw[black, samples=40, domain=2.0247:0.8*\xmax] plot ({(\x^2+(\pi-\pi*asin(\pi/exp(\x))/180)^2)^(1/(2*\rho))*cos(atan((\pi-\pi*asin(\pi/exp(\x))/180)/\x)/\rho)},{(\x^2+(\pi-\pi*asin(\pi/exp(\x))/180)^2)^(1/(2*\rho))*sin(atan((\pi-\pi*asin(\pi/exp(\x))/180)/\x)/\rho)});
\fill[pattern=north east lines,pattern color=black!50, samples=40, domain=2.0247:0.8*\xmax]  (5.0,-8) -- plot ({(\x^2+(\pi-\pi*asin(\pi/exp(\x))/180)^2)^(1/(2*\rho))*cos(atan((\pi-\pi*asin(\pi/exp(\x))/180)/\x)/\rho)},{-(\x^2+(\pi-\pi*asin(\pi/exp(\x))/180)^2)^(1/(2*\rho))*sin(atan((\pi-\pi*asin(\pi/exp(\x))/180)/\x)/\rho)})  -- (10,-8) -- cycle;
\draw[black, samples=40, domain=2.0247:0.8*\xmax] plot ({(\x^2+(\pi-\pi*asin(\pi/exp(\x))/180)^2)^(1/(2*\rho))*cos(atan((\pi-\pi*asin(\pi/exp(\x))/180)/\x)/\rho)},{-(\x^2+(\pi-\pi*asin(\pi/exp(\x))/180)^2)^(1/(2*\rho))*sin(atan((\pi-\pi*asin(\pi/exp(\x))/180)/\x)/\rho)});
\fill[pattern=north west lines,pattern color=black!50, samples=40, rotate=180, domain=0.001:1.16*\xmax,variable=\x] (7.0,-8) -- plot ({(\x^2+\pi^2)^(1/(2*\sigma))*cos(atan(\pi/\x)/\sigma)},{-(\x^2+\pi^2)^(1/(2*\sigma))*sin(atan(\pi/\x)/\sigma)})  -- (10,-8) -- cycle;
\draw[black, samples=40, rotate=180, domain=0.001:1.16*\xmax] plot ({(\x^2+\pi^2)^(1/(2*\sigma))*cos(atan(\pi/\x)/\sigma)},{-(\x^2+\pi^2)^(1/(2*\sigma))*sin(atan(\pi/\x)/\sigma)});
\draw[black,ultra thick,dashed, samples=40, rotate=180, domain=0.001:1.16*\xmax] plot ({(\x^2+\pi^2)^(1/(2*\sigma))*cos(atan(\pi/\x)/\sigma)},{-(\x^2+\pi^2)^(1/(2*\sigma))*sin(atan(\pi/\x)/\sigma)});
\fill[pattern=north east lines,pattern color=black!50, samples=40, rotate=180, domain=0.001:1.16*\xmax]  (7.0,8) -- plot ({(\x^2+\pi^2)^(1/(2*\sigma))*cos(atan(\pi/\x)/\sigma)},{(\x^2+\pi^2)^(1/(2*\sigma))*sin(atan(\pi/\x)/\sigma)})  -- (10,8) -- cycle;
\draw[black, samples=40, rotate=180, domain=0.001:1.16*\xmax] plot ({(\x^2+\pi^2)^(1/(2*\sigma))*cos(atan(\pi/\x)/\sigma)},{(\x^2+\pi^2)^(1/(2*\sigma))*sin(atan(\pi/\x)/\sigma)});
\node at (-1.4,1.6) [right] {$\gamma_1^*$};
\node at (-1.4,-1.6) [right] {$\overline{\gamma_1^*}$};
\fill [pattern=north west lines,pattern color=black!50]  (-10,0) -- plot  [smooth,tension=0.7] coordinates {(-4,0) (-4.0,0.5) (-4.1,1.04) } -- (-10,8) -- cycle;
\fill [pattern=north west lines,pattern color=black!50] (5,8) -- plot [smooth,tension=0.7] coordinates {(1.65,4.8) (1.4,5.2) (1.3,5.6)} -- (3,8) -- cycle;
\fill [pattern=north east lines,pattern color=black!50]  (-10,0) -- plot  [smooth,tension=0.7] coordinates {(-4,0) (-4.0,0.5) (-4.1,-1.04) } -- (-10,-8) -- cycle;
\fill [pattern=north east lines,pattern color=black!50] (5,-8) -- plot [smooth,tension=0.7] coordinates {(1.65,-4.8) (1.4,-5.2) (1.3,-5.6)} -- (3,-8) -- cycle;
\draw [black] plot  [smooth,tension=0.7] coordinates {(1.65,4.8) (1.4,5.2) (1.3,5.6)};
\draw [black] plot  [smooth,tension=0.7] coordinates {(-4,0) (-4.0,0.5) (-4.1,1.04) };
\draw [black,ultra thick,dashed] plot  [smooth,tension=0.7] coordinates {(1.65,4.8) (1.4,5.2) (1.3,5.6)};
\draw [black,ultra thick,dashed] plot  [smooth,tension=0.7] coordinates {(-4,0) (-4.0,0.5) (-4.1,1.04) };
\draw [black] plot  [smooth,tension=0.7] coordinates {(1.65,-4.8) (1.4,-5.2) (1.3,-5.6)};
\draw [black] plot  [smooth,tension=0.7] coordinates {(-4,0) (-4.0,-0.5) (-4.1,-1.04) };
\node at (-4.1,0.5) [right] {$\gamma_l$};
\node at (-4.1,-0.5) [right] {$\overline{\gamma_l}$};
\node at (0.5,4.7) [right] {$\gamma_r$};
\node at (0.5,-4.7) [right] {$\overline{\gamma_r}$};
\draw[-] (-9,0) -- (9,0);
\draw[-] (0,-8) -- (0,8);
 \draw[->] (0,9.5) -- (0,8.5);
\node at (0.,9) [right] {$\varphi$};
\fill [pattern=north west lines,pattern color=black!50]  (-10,13) -- plot  [smooth,tension=0.7] coordinates {(-4,13) (-4.0,13.5) (-4.1,14.04) } -- (-10,16) -- cycle;
\draw [black] plot  [smooth,tension=0.7] coordinates {(-4,13) (-4.0,13.5) (-4.1,14.04) };
\draw [black,ultra thick,dashed] plot  [smooth,tension=0.7] coordinates {(-4,13) (-4.0,13.5) (-4.1,14.04) };
\fill [pattern=north east lines,pattern color=black!50]  (-10,13) -- plot  [smooth,tension=0.7] coordinates {(-4,13) (-4.0,12.5) (-4.1,11.96) }  -- (-10,10) -- cycle;
\draw [black] plot  [smooth,tension=0.7] coordinates {(-4,13) (-4.0,12.5) (-4.1,11.96) };
\fill[pattern=north west lines,pattern color=black!50, samples=40, rotate=180, domain=0.001:1.16*\xmax,variable=\x] (2.0,-16) -- plot ({(\x^2+\pi^2)^(1/(2*\sigma))*cos(atan(\pi/\x)/\sigma)},{-(\x^2+\pi^2)^(1/(2*\sigma))*sin(atan(\pi/\x)/\sigma)-13})  -- (10,-16) -- cycle;
\draw[black, samples=40, rotate=180, domain=0.001:1.16*\xmax] plot ({(\x^2+\pi^2)^(1/(2*\sigma))*cos(atan(\pi/\x)/\sigma)},{-(\x^2+\pi^2)^(1/(2*\sigma))*sin(atan(\pi/\x)/\sigma)-13});
\draw[black,ultra thick,dashed, samples=40, rotate=180, domain=0.001:1.16*\xmax] plot ({(\x^2+\pi^2)^(1/(2*\sigma))*cos(atan(\pi/\x)/\sigma)},{-(\x^2+\pi^2)^(1/(2*\sigma))*sin(atan(\pi/\x)/\sigma)-13});
\fill[pattern=north east lines,pattern color=black!50, samples=40, rotate=180, domain=0.001:1.16*\xmax] (2.0,-10) -- plot ({(\x^2+\pi^2)^(1/(2*\sigma))*cos(atan(\pi/\x)/\sigma)},{(\x^2+\pi^2)^(1/(2*\sigma))*sin(atan(\pi/\x)/\sigma)-13})  -- (10,-10) -- cycle;
\draw[black, samples=40, rotate=180, domain=0.001:1.16*\xmax] plot ({(\x^2+\pi^2)^(1/(2*\sigma))*cos(atan(\pi/\x)/\sigma)},{(\x^2+\pi^2)^(1/(2*\sigma))*sin(atan(\pi/\x)/\sigma)-13});
\fill [pattern=north west lines,pattern color=black!50] (-2,16) -- plot  [smooth,tension=0.3] coordinates {(-1.08,14.85) (-0.3,15) (3.0,13.9) } -- (9,16) -- cycle;
\fill [pattern=north west lines,pattern color=black!50] (9,13) -- plot  [smooth,tension=0.7] coordinates {(2.9,13) (2.95,13.2) (3.0,13.9) } -- (9,16) -- cycle;
\fill [pattern=north east lines,pattern color=black!50] (-2,10) -- plot  [smooth,tension=0.3] coordinates {(-1.08,11.15) (-0.3,11) (3.0,12.1) } -- (9,10) -- cycle;
\fill [pattern=north east lines,pattern color=black!50] (9,13) -- plot  [smooth,tension=0.7] coordinates {(2.9,13) (2.95,12.8) (3.0,12.1) } -- (9,10) -- cycle;
\draw [black] plot  [smooth,tension=0.3] coordinates {(-1.08,14.85) (-0.3,15) (3.0,13.9) };
\draw [black] plot  [smooth,tension=0.7] coordinates {(2.9,13) (2.95,13.2) (3.0,13.9) };
\draw [black,ultra thick, dashed] plot  [smooth,tension=0.3] coordinates {(-1.08,14.85) (-0.3,15) (3.0,13.9) };
\draw [black] plot  [smooth,tension=0.3] coordinates {(-1.08,11.15) (-0.3,11) (3.0,12.1) };
\draw [black] plot  [smooth,tension=0.7] coordinates {(2.9,13) (2.95,12.8) (3.0,12.1) };
\draw [black,ultra thick, dashed] plot  [smooth,tension=0.7] coordinates {(2.9,13) (2.95,13.2) (3.0,13.9) };
\node at (-8,7) [right] {$S$};
\node at (-8,-7) [right] {$\overline{S}$};
\node at (-8,15) [right] {$\varphi^{-1}(S)$};
\node at (-8,11) [right] {$\varphi^{-1}(\overline{S})$};
\draw[-] (-9,13) -- (9,13);
\draw[-] (0,10) -- (0,16);
\draw[->] (0,-8.5) -- (0,-9.5);
\node at (0.,-9) [right] {$G_0$};
\draw[-] (5.5,-12.2) -- (5.5,-11.8);
\draw[-,ultra thick,dashed] (1.3,-12) -- (4.33,-12);
\draw[-] (-9,-12) -- (9,-12);
\draw[-] (0,-10) -- (0,-14);
\draw [black,ultra thick, dashed] (-1.3,-12) arc (180:0:1.3) ;
\draw[black,ultra thick,dashed, samples=40, domain=0.0:180] plot ({5.5*exp(0.236*cos(\x))*cos(0.236*sin(\x)*180/3.1415)},{-12+5.5*exp(0.236*cos(\x))*sin(0.236*sin(\x)*180/3.1415)});
\draw (1.3,-12) arc (0:360:1.3) ;
\draw[black, samples=40, domain=0.0:360] plot ({5.5*exp(0.236*cos(\x))*cos(0.236*sin(\x)*180/3.1415)},{-12+5.5*exp(0.236*cos(\x))*sin(0.236*sin(\x)*180/3.1415)});
\node at (5.3,-12.5) [right] {$1$};
\node at (7.4,-12.5) {$e^{\varepsilon}$};
\node at (3.65,-12.5) {$e^{-\varepsilon}$};
\node at (1.7,-12.5) {$\varepsilon$};
\node at (-1.7,-12.5) {-$\varepsilon$};
\node at (7.7,-10.65) {$G_0(\gamma_l)$};
\node at (-2.1,-10.65) {$G_0(\gamma_r)$};
\draw [->,ultra thick] (-1.1,14.85) -- (-0.7,15.0) ;
\draw [->,ultra thick] (-0.8,3.65) -- (-0.72,4.0) ;
\draw [->,ultra thick] (2.75,-12) -- (2.5,-12) ;
\node at (7.1,6.5) {$\gamma_0$};
\node at (7.1,-6.5) {$\overline{\gamma_0}$};
\end{tikzpicture}
\caption{The boundary of $A=\cl\!\left( \varphi^{-1}(S)\cup \varphi^{-1}(\overline{S})\right)$ and its images under $\varphi$ and $G_0\circ\varphi$. 
Here $\partial A\cap H$ and its images 
are drawn as dashed curves. The domains $S$ and $\overline{S}$ and their
preimages under $\varphi$ are shaded.}
\label{boundaryA}
\end{figure}

Let $\Omega_m=\Omega\backslash (\Omega_r\cup \Omega_l)$ be the ``middle piece'' of~$\Omega$.
Let $T$ be the domain above the curve $\gamma_0$ and put $S=T\backslash\cl(\Omega_m)$ and
$R=T\backslash \cl(\Omega_m\cup \Omega_l)=S\backslash \cl(\Omega_l)$.
Here and in the following $\cl(\cdot)$ denotes the closure of a set.
Thus we have $L\subset R\subset S\subset T\subset H$.

It is not difficult to see that there exists a quasiconformal map $\varphi\colon H\to T$ 
which satisfies $\varphi(z)=z$ for $z\in L$ as well as $\varphi(z)=z$ for $\re z<x_0$
with some $x_0<0$. Choosing $\varepsilon>0$ sufficiently small we may achieve that
$x_l<x_0$ and hence that $\varphi(z)=z$ in particular for $z\in\Omega_l$.
Moreover, the map $\varphi$ can be chosen such that $\varphi(z)\sim z$ as $z\to\infty$.
In fact, one can show that any quasiconformal map $\varphi$ with the properties 
listed before also has the last property.
The map $\varphi$ extends continuously $(-\infty,0]$ and we may assume that $\varphi(0)=0$ and hence
$\varphi((-\infty,0]=(-\infty,0]$.

We denote by $\overline{L}$, $\overline{R}$, $\overline{S}$ and $\overline{T}$ the reflections
of $L$, $R$, $S$ and $T$ on the real axis. 
The function $\varphi$ extends to the lower half-plane by putting
$\varphi(z)= \overline{\varphi\!\left(\overline{z}\right)}$ for $\im z<0$ and it maps
the lower half-plane to~$\overline{T}$.

Let $A$ be the closure of $\varphi^{-1}(S)\cup\varphi^{-1}\!\left(\overline{S}\right)$ 
and put $B=\{z\in\C\colon z^N\in A\}$; see Figures~\ref{boundaryA} and~\ref{B}.
The boundary of $B$ consists of 
a Jordan curve $\Gamma\colon [0,1]\to\partial B$ which we may assume to have positive orientation.

\begin{figure}[!htb]
\captionsetup{width=.85\textwidth}
\centering
\begin{tikzpicture}[scale=0.55,>=latex](-10,-10)(-10,10)
\clip (-10,-10) rectangle (10,10);
\filldraw [gray!10] plot  [smooth,tension=0.7] coordinates {(-15,0) (-13,-5) (-8.4,-4) (-8.5,-2.23) (-8.55,0) (-8.5,2.23) (-8.4,4) (-13,5) (-15,0)};
\draw [black,thick] plot  [smooth,tension=0.7] coordinates {(-15,0) (-13,-5) (-8.4,-4) (-8.5,-2.23) (-8.55,0) (-8.5,2.23) (-8.4,4) (-13,5) (-15,0)};
\draw [black,thick] plot  [rotate=120,smooth,tension=0.7] coordinates {(-15,0) (-13,-5) (-8.4,-4) (-8.5,-2.23) (-8.55,0) (-8.5,2.23) (-8.4,4) (-13,5) (-15,0)};
\draw [black,thick] plot  [rotate=240,smooth,tension=0.7] coordinates {(-15,0) (-13,-5) (-8.4,-4) (-8.5,-2.23) (-8.55,0) (-8.5,2.23) (-8.4,4) (-13,5) (-15,0)};
\filldraw [gray!40] plot  [smooth,tension=0.7] coordinates {(13,0) (8.5,-3) (8.68,-0.5) (8.7,0) (8.68,0.5) (8.5,3) };
\draw [black, thick] plot  [smooth,tension=0.7] coordinates {(13,0) (8.5,-3) (8.68,-0.5) (8.7,0) (8.68,0.5) (8.5,3) (13,0) };
\filldraw [gray!10] plot  [rotate=120, smooth,tension=0.7] coordinates {(13,0) (8.5,-3) (8.68,-0.5) (8.7,0) (8.68,0.5) (8.5,3) };
\draw [black, thick] plot  [rotate=120, smooth,tension=0.7] coordinates {(13,0) (8.5,-3) (8.68,-0.5) (8.7,0) (8.68,0.5) (8.5,3) (13,0) };
\filldraw [gray!10] plot  [rotate=240, smooth,tension=0.7] coordinates {(13,0) (8.5,-3) (8.68,-0.5) (8.7,0) (8.68,0.5) (8.5,3) };
\draw [black, thick] plot  [rotate=240, smooth,tension=0.7] coordinates {(13,0) (8.5,-3) (8.68,-0.5) (8.7,0) (8.68,0.5) (8.5,3) (13,0) };
\filldraw[gray!40, thick, samples=50, domain=0:28] plot ({0.7+0.57735*\x+10/(\x+0.8)},{\x});
\filldraw[gray!40, thick, samples=50, domain=0:28] plot ({0.7+0.57735*\x+10/(\x+0.8)},{-\x});
\filldraw[gray!10, thick, rotate=120, samples=50, domain=0:28] plot ({0.7+0.57735*\x+10/(\x+0.8)},{\x});
\filldraw[gray!10, thick, rotate=120, samples=50, domain=0:28] plot ({0.7+0.57735*\x+10/(\x+0.8)},{-\x});
\filldraw[gray!10, thick, rotate=240, samples=50, domain=0:28] plot ({0.7+0.57735*\x+10/(\x+0.8)},{\x});
\filldraw[gray!10, thick, rotate=240, samples=50, domain=0:28] plot ({0.7+0.57735*\x+10/(\x+0.8)},{-\x});
\draw[-](-10,0)->(10,0);
\draw[-](0,-10)->(0,10);
\draw[-,dashed](-5.7735,-10)->(5.7735,10);
\draw[-,dashed](5.7735,-10)->(-5.7735,10);
\draw[black, thick, samples=50, domain=0.5:6.5] plot ({0.7+0.57735*\x+10/(\x+0.8)},{\x});
\draw[black, thick, samples=50, domain=0.5:6.5] plot ({0.7+0.57735*\x+10/(\x+0.8)},{-\x});
\draw[black, thick, rotate=120, samples=50, domain=0.5:6.5] plot ({0.7+0.57735*\x+10/(\x+0.8)},{\x});
\draw[black, thick, rotate=120, samples=50, domain=0.5:6.5] plot ({0.7+0.57735*\x+10/(\x+0.8)},{-\x});
\draw[black, thick, rotate=240, samples=50, domain=0.5:6.5] plot ({0.7+0.57735*\x+10/(\x+0.8)},{\x});
\draw[black, thick, rotate=240, samples=50, domain=0.5:6.5] plot ({0.7+0.57735*\x+10/(\x+0.8)},{-\x});
\node at (5.4,9) [right] {$Y_1$};
\node at (3.4,9) [right] {$Y_2$};
\node at (7.3,7) [right] {$Z$};
\node at (-8.3,7) [right] {$X$};
\node at (5.1,3) [left] {$\alpha_1$};
\node at (5.1,-3) [left] {$\alpha_6$};
\node at (-4.9,3) [right] {$\alpha_3$};
\node at (-4.9,-3) [right] {$\alpha_4$};
\node at (0.3,5.9) [right] {$\alpha_2$};
\node at (0.3,-5.9) [right] {$\alpha_5$};
\node at (4.6,7.5) [right] {$\beta_1$};
\node at (4.6,-7.5) [right] {$\beta_5$};
\node at (-8.5,0.7) [left] {$\beta_3$};
\node at (8.55,0.6) [right] {$\beta_6$};
\node at (-4.8,8.2) [right] {$\beta_2$};
\node at (-4.9,-8.2) [right] {$\beta_4$};
\end{tikzpicture}
\caption{Sketch of $B$ for $N=3$. The domain $B$ is the exterior of the Jordan curve
$\Gamma$ formed by $\alpha_1,\dots,\alpha_6$ and $\beta_1,\dots,\beta_6$.}
\label{B}
\end{figure}

The curve $\Gamma$ splits into curves $\alpha_1,\dots,\alpha_{2N}$ and
$\beta_1,\dots,\beta_{2N}$. 
Informally, $\alpha_j$ is mapped to the upper boundary of $A$ under the map
$z\mapsto z^N$ if $j$ is odd while
it is mapped to the lower boundary of $A$ if $j$ is even.
Similarly, $\beta_j$ is mapped to the left or right boundary of~$A$ by $z\mapsto z^N$, depending on whether
$j$ is odd or even.
So
$\alpha_j$ is the part of $\Gamma$ which is contained 
in the sector $\Sigma_j$ and which by the function $z\mapsto \varphi(z^N)$ is mapped 
to a subcurve of $\gamma_1$ or its reflection on the real axis.
For a more precise description of this subcurve,
let $\gamma_1^*$ be the subcurve of $\gamma_1$ that lies between
the intersection of $\gamma_1$ with $\gamma_l$ and $\gamma_r$, let $-\gamma_1^*$ 
be the curve $\gamma_1^*$ with reversed orientation and let 
$\overline{\gamma_1^*}$ be the reflection of $\gamma_1^*$ on the real axis.
If $j$ is odd, then $\alpha_j$ is mapped bijectively to $-\gamma_1^*$ by $z\mapsto \varphi(z^N)$,
and if $j$ is even, then $\alpha_j$ is mapped bijectively to $\overline{\gamma_1^*}$ by
this function.
Thus for odd $j$ the function
$z\mapsto G_0(\varphi(z^N))$ maps $\alpha_j$ bijectively to the interval
$\left[\varepsilon,e^{-\varepsilon}\right]$, preserving the orientation.
For even $j$ it maps $\alpha_j$ to the same interval,
but reversing the orientation.
Noting that $G_1(z)=1/G_0(z)$ for $z\in J\cup \overline{J}$, and thus in particular for 
$z$ on the curves $-\gamma_1^*$ and $\overline{\gamma_1^*}$, we see that 
$z\mapsto G_1(\varphi(z^N))$ maps $\alpha_j$ to the interval $[e^{\varepsilon},1/\varepsilon]$, 
reversing the orientation for odd $j$ and preserving the orientation for even~$j$.

The curve $\beta_j$ connects $\alpha_j$ and $\alpha_{j+1}$ and is
mapped onto the concatenation of the curves $\varphi^{-1}(\gamma_l)$ and $\overline{\varphi^{-1}(\gamma_l)}$
by the map $z\mapsto z^N$ if $j$ is odd, and onto that of 
$\varphi^{-1}(\gamma_r)$ and $\overline{\varphi^{-1}(\gamma_r)}$ if $j$ is even.
We remark that $\varphi(\gamma_l)=\gamma_l$ since
$x_l<x_0$ so that $\varphi$ is the identity on the curve~$\gamma_l$.
We conclude that $z\mapsto G_0(\varphi(z^N))$ maps $\beta_j$
to a loop surrounding the point $1$ once for odd $j$. 
The loop starts at $e^{-\varepsilon}$ and passes through the point $e^\varepsilon$.
For even $j$, the image of the curve $\beta_j$ under the map $z\mapsto z^N$ intersects the 
positive real axis. The map $\varphi$ is not defined there.
In fact, the image of $\beta_j\backslash\R$ under the map $z\mapsto\varphi(z^N)$ consists
of the curves $\gamma_r$ and $\overline{\gamma_r}$, except for their endpoints on 
$\gamma_0$ and $\overline{\gamma_0}$; cf.\ Figure~\ref{boundaryA}.
However, these endpoints are both mapped to $-\varepsilon$ by $G_0$. So the image of 
$\beta_j$ under the map $z\mapsto G_0(\varphi(z^N))$ is a single curve passing through
$-\varepsilon$. Combined with previous considerations we see that 
$z\mapsto G_0(\varphi(z^N))$ maps $\beta_j$ to a circle of radius $\varepsilon$ around $0$
for even~$j$.

Using again that $G_1(z)=1/G_0(z)$ for $z\in J\cup \overline{J}$
we see that $z\mapsto G_1(\varphi(z^N))$ maps $\beta_j$
to a loop around $1$ and~$\infty$, respectively.

We will define a quasiregular map $G\colon\C\to\C$ first in $B$ and later extend it 
to the (bounded) complement of~$B$. In order to do so, put 
$B_j=B\cap \Sigma_j$ for $j=1,\dots, 2N$, let
\[
\eta\colon \bigcup_{j=1}^{2N} \Sigma_j\to T\cup \overline{T},
\quad \eta(z)=\varphi(z^N),
\]
and denote by $\eta_j$ the restriction of $\eta$ to~$\Sigma_j$.
Then $\eta_j$ maps $\Sigma_j$ univalently onto 
$T$ or $\overline{T}$, depending on whether $j$ is odd or even.
Since $\varphi(z)\sim z$ as $z\to\infty$, we have
\begin{equation}\label{asyeta}
\eta(z)\sim z^N
\end{equation}
as $z\to\infty$.

Let
\[
X= \eta_2^{-1}\!\left(\overline{R}\right) \cup \eta_{2N-1}^{-1}(R)\cup \bigcup_{j=3}^{2N-2} B_j 
\quad\text{and}\quad
Z=\eta_1^{-1}(R) \cup \eta_{2N}^{-1}\!\left(\overline{R}\right).
\]
The sets $X$ and $Z$ are shown in light and dark gray in Figure~\ref{B}.

We put $G(z)=G_0(\eta(z))$ for $z\in X$ and $G(z)=G_1(\eta(z))$ for $z\in Z$.
The map $G$ extends continuously to the parts of the rays 
$\{re^{ik\pi/N}\colon r>0\}$ that are contained in~$B$, for $k=0$ and $k=2,\dots 2N-2$.
Thus $G$ extends continuously to the closures of $X$ and~$Z$.
Note that since $\varphi(z)=z$ for $z\in L\cup\overline{L}$ we indeed have 
$G(z)=G_0(z^N)$ for $z\in D_j$ if $2\leq j\leq 2N-1$ and
$G(z)=G_1(z^N)$ for $z\in D_j$ if $j=1$ or $j= 2N$, as said in section~\ref{introproof3}.

Next we define $G$ in the remaining part of~$B$; that is,
in $B\backslash(\cl(X)\cup \cl(Z))$. As we will define a map  which 
commutes with complex conjugation, it suffices to define $G$ in the remaining part
in the upper half-plane. We put 
(cf.\ Figure~\ref{B})
\[
Y_1=\eta_1^{-1}(\Omega_l),
\quad
Y_2=\eta^{-1}\!\left(\overline{\Omega_l}\right)
\quad\text{and}\quad
Y_0=\partial Y_1\cap \partial Y_2.
\]
Hence $Y_0=\{re^{i\pi/N}\colon r>r_0\}$ for some $r_0>0$.

In order to define $G$ in $Y_1\cup Y_2\cup Y_0$,
we note that since $\varphi(z)=z$ for $z\in \Omega_l$, we have $\eta(z)=z^N$ for 
$z\in Y_1\cup Y_2\cup Y_0$.
Also, $z\mapsto -(-\eta(z))^{\sigma_0}=-(-z^N)^{\sigma_0}$ maps $Y_1$ univalently onto the half-strip $P$ and
$Y_2$ univalently onto 
\[
\overline{P}=\{z\in\C\colon \re < x_l\text{ and }-\pi<\im z<0\}.
\]
Moreover, $Y_0$ is mapped to $(-\infty,x_l]$ by this map.

We consider the quasiconformal maps 
\[
\tau_1\colon P\to \C, \quad \tau_1(x+iy)=x+i\frac{y+\pi}{2},
\]
and $\tau_2\colon \overline{P}\to \C$, $\tau_2(z)=\overline{\tau_1\!\left(\overline{z}\right)}$.
Then we define
\begin{equation}\label{GY}
G(z)=
\begin{cases}
\exp\!\left(-\exp\!\left(\tau_1\!\left(-(-z^N)^{\sigma_0}\right)+s_0\right)\right) & \text{if }z\in Y_1, \\
\exp\!\left(-i\exp\!\left(-(-z^N)^{\sigma_0}+s_0\right)\right) & \text{if }z\in Y_0, \\
\exp\!\left(\exp\!\left(\tau_2\!\left(-(-z^N)^{\sigma_0}\right)+s_0\right)\right) & \text{if }z\in Y_2. \\
\end{cases}
\end{equation}
Note that $G$ has already been defined on $\cl(X)$ and thus in particular on
$\partial X\cap\partial Y_2$, and that we have 
$G(z)=\exp\exp(-(-\eta(z))^{\sigma_0}+s_0))$ for $z\in \partial X\cap\partial Y_2$.
Since $\tau_2(x-i\pi)=x-i\pi$ for $x<x_l$
we conclude that $G$ is continuous in $\partial X\cap\partial Y_2$.
Similarly, 
$G(z)=\exp\!\left(-\exp(-(-\eta(z))^{\sigma_0}+s_0))\right)$ for $z\in \partial Z\cap\partial Y_1$
and thus $G$ is also continuous there.
Clearly $G$ also extends continuously to the remaining parts of $\partial Y_1$ and $\partial Y_2$.
Overall we have thus defined a continuous map $G$ in $B$ which is quasiregular in
the interior of~$B$.

\subsection{Extension of $G$ to the complement of $B$} \label{extension-bounded}
To extend $G$ to the complement of~$B$, recall that the boundary of $B$ is given 
by the Jordan curve $\Gamma\colon [0,1]\to\partial B$ which 
consists of the curves $\alpha_1,\dots,\alpha_{2N}$ and $\beta_1,\dots,\beta_{2N}$.

For $2\leq j\leq 2N-1$ the function $G$ maps $\alpha_j$ 
to the interval $\left[\varepsilon,e^{-\varepsilon}\right]$, preserving the orientation
for odd $j$ and reversing the orientation for even~$j$.
Moreover, $\alpha_1$ and $\alpha_{2N}$ are mapped to $\left[e^{\varepsilon},1/\varepsilon\right]$,
preserving the orientation for $\alpha_1$ and reversing the orientation for $\alpha_{2N}$.

Also, for odd $j$ satisfying $j\neq 1$ and $j\neq 2N-1$ the function $G$ maps $\beta_j$ to a loop surrounding
the point~$1$ once and for even $j\neq 2N$ it maps $\beta_j$ to a loop surrounding
the point~$0$ once. The curve $\beta_{2N}$ is mapped to a loop around $\infty$
and $\beta_1$ and $\beta_{2N-1}$ are mapped to half-loops around~$1$,
connecting the points $e^{\varepsilon}$ and~$e^{-\varepsilon}$.

To define a quasiregular map in the interior of $\Gamma$ which has this
boundary behavior we first restrict to the case that $N$ is odd.
We note that there exists a conformal map $\nu$ from
the sector $\{z\in\C\colon 0<\arg z < \pi/(2N-2),\; |z|<1\}$ onto
the lower half-plane such that the continuous extension to
the boundary maps $(0,1,e^{i\pi/(2N-2)})$ to $(\infty,1,0)$.
Reflecting $\nu$ along the sides of sectors $2N-3$ times we can extend
$\nu$ to a locally univalent map 
$\nu$ from the half-disk $\{z\in\C\colon \re z> 0,\; |z|< 1\}$ to~$\C$.
Moreover, $\nu$ extends continuously to the boundary of this half-disk.

The boundary curve of this half-disk, beginning at the origin, is mapped
-- in the following order -- to the intervals
$[\infty,1]$, $[1,0]$, $[0,1]$, $[1,0]$, $\dots$, $[1,0]$, $[0,1]$, $[1,\infty]$,
with $\nu(e^{ij\pi/(2N-2)})=1$ for even $j\in\{-(N-1),\dots,N-1\}$ while $\nu(e^{ij\pi/(2N-2)})=0$ for odd~$j\in\{-(N-1),\dots,N-1\}$.

As noted above, $G$ maps the curves $\beta_j$ to certain (small) loops around $0$, $1$ or~$\infty$,
or (small) half-loops around~$1$, depending on the value of~$j$.
We conclude that for $2\leq j\leq 2N-2$ 
there is a curve $\beta_j^*$ near $e^{i(-\pi/2 +(j-1)\pi/(2N-2)}$ such that  
$G\circ \beta_j$ and $\nu\circ \beta_j^*$ are parametrizations of the same loop around $1$ or~$0$.
Similarly, for  $j= 1$  and $j= 2N-1$, 
there exist curves $\beta_1^*$ and $\beta_{2N-1}^*$, located near the points $\pm i$, 
such that $G\circ \beta_j$ and $\nu\circ \beta_j^*$ parametrize the same half-loop around~$1$, 
and finally there exists a curve $\beta_{2N}^*$ near $0$ such that 
$G\circ \beta_{2N}$ and $\nu\circ \beta_{2N}^*$ give the same loop around~$\infty$.

Using the convention $\beta_{2N+1}^*=\beta_1^*$, 
we join $\beta_j^*$ and $\beta_{j+1}^*$ by a curve $\alpha_j^*$, which is
chosen to be part of 
the semi-circle $\{z\colon \re z\geq 0,|z|=1\}$ for $2\leq j\leq 2N-1$, and part
of the line segment $[i,-i]$ for $j=1$ and $j=2N$.
Then for $2\leq j\leq 2N-1$ the curve $\alpha_j^*$ is mapped to the interval $[\varepsilon,e^{-\varepsilon}]$ by~$\nu$,
with the orientation preserved for odd $j$ and the orientation reversed for even~$j$.
Taking into account the mapping behavior of $G$ on the curves $\alpha_j$ as noted above,
we see that for $2\leq j\leq 2N-1$ both $G\circ \alpha_j$ and $\nu\circ \alpha_j^*$ are 
parametrizations of the interval $[\varepsilon,e^{-\varepsilon}]$, with the same orientation.
Similarly we see that for 
$j=1$ and $j=2N$ both $G\circ \alpha_j$ and $\nu\circ \alpha_j^*$ are parametrizations of $[e^\varepsilon,1/\varepsilon]$,
reversing the orientation for $j=1$ and preserving it for $j=2N$.

The curves $\alpha_j^*$ and $\beta_j^*$ are shown in Figure~\ref{Delta}.
Their concatenation is a Jordan curve whose interior we denote by~$\Delta$.

\begin{figure}[!htb]
\captionsetup{width=.85\textwidth}
\centering
\begin{tikzpicture}[scale=5.,>=latex](-1.2,-1.1)(-1.2,1.1)
\draw [domain=-90:90,samples=200] plot ({cos(\x)}, {sin(\x)});
\draw[-](0,-1)->(0,1);
\filldraw[white] (0,0) circle (0.13);
\draw [domain=-90:90] plot ({0.13*cos(\x)}, {0.13*sin(\x)});
\filldraw[white] (0,1) circle (0.13);
\draw [domain=-90:-3] plot ({0.13*cos(\x)}, {1.+0.13*sin(\x)});
\filldraw[white] (0,-1) circle (0.13);
\draw [domain=3:90] plot ({0.13*cos(\x)}, {-1.+0.13*sin(\x)});
\filldraw[white] (1,0) circle (0.13);
\draw [domain=93:267] plot ({1.+0.13*cos(\x)}, {0.13*sin(\x)});
\filldraw[white] (0.707,0.707) circle (0.13);
\draw [domain=138:312] plot ({0.707+0.13*cos(\x)}, {0.707+0.13*sin(\x)});
\filldraw[white] (0.707,-0.707) circle (0.13);
\draw [domain=48:222] plot ({0.707+0.13*cos(\x)}, {-0.707+0.13*sin(\x)});
\draw[-,dashed](-0.2,0)->(1.2,0);
\draw[-,dashed](-0,-1.1)->(0,1.1);
        \node[right] at (0.11,0.08) {$\beta_6^*$};
        \node[right] at (0.05,0.85) {$\beta_5^*$};
        \node[right] at (0.05,-0.85) {$\beta_1^*$};
        \node[left] at (0.89,0.08) {$\beta_3^*$};
        \node[left] at (0.6,0.6) {$\beta_4^*$};
        \node[left] at (0.6,-0.6) {$\beta_2^*$};
        \node[right] at (0,0.5) {$\alpha_6^*$};
        \node[right] at (0,-0.5) {$\alpha_1^*$};
        \node[left] at (0.45,0.85) {$\alpha_5^*$};
        \node[left] at (0.92,0.35) {$\alpha_4^*$};
        \node[left] at (0.45,-0.85) {$\alpha_2^*$};
        \node[left] at (0.92,-0.35) {$\alpha_3^*$};
        \node at (0.45,-0.15) {$\Delta$};
\filldraw[black] (1.,0.) circle (0.01);
        \node[below right] at (1.0,0.0) {$1$};
\filldraw[black] (0.,1.) circle (0.01);
        \node[left] at (0.0,1.0) {$e^{i2\pi/4}=i$};
\filldraw[black] (0.,-1.) circle (0.01);
        \node[left] at (0.0,-1.0) {$e^{-i2\pi/4}=-i$};
\filldraw[black] (0.,0.) circle (0.01);
        \node[below left] at (0.0,0.0) {$0$};
\filldraw[black] (0.707,-0.707) circle (0.01);
        \node[right] at (0.707,-0.707) {$e^{-i\pi/4}=\frac12\sqrt{2}(1-i)$};
\filldraw[black] (0.707,0.707) circle (0.01);
        \node[right] at (0.707,0.707) {$e^{i\pi/4}=\frac12\sqrt{2}(1+i)$};
\end{tikzpicture}
\caption{Sketch of the curves $\alpha_j^*$ and $\beta_j^*$ and the domain $\Delta$ for $N=3$.}
\label{Delta}
\end{figure}

Let $\D=D(0,1)$ be the unit disk and let $\phi_1\colon \D \to\C\backslash \cl(B)$ and
$\phi_2\colon \D\to\Delta$ be conformal maps. These maps extend homeomorphically to
the closure of $\D$ and they map $\partial \D$ to $\partial B$  and $\partial \Delta$, respectively.
Since $\partial B$ is given by the curves $\alpha_j$ and $\beta_j$, 
and $\partial \Delta$ by the curves $\alpha_j^*$ and $\beta_j^*$, 
the above arguments imply that 
we may assume that 
the maps are chosen such that $G(\phi_1(1))=\nu(\phi_2(1))$,
which implies that 
$\chi_1\colon [0,2\pi] \to\C$, $\chi_1(t)=G(\phi_1(e^{it}))$
 and $\chi_2\colon [0,2\pi] \to\C$, $\chi_2(t)=\nu(\phi_2(e^{it}))$
are parametrizations of the same curve.
Thus there exists an increasing homeomorphism $h\colon [0,2\pi] \to [0,2\pi]$ 
such that $\chi_1(t)=\chi_2(h(t))$.
In fact, $h$ and $h^{-1}$ are piecewise differentiable.
Thus $h$ is quasisymmetric.
This implies (see~\cite[\S II.7]{LV}) that the 
function $\psi\colon\partial\D\to\partial \D$, $\psi(e^{it})=e^{ih(t)}$ 
extends to a quasiconformal map $\psi\colon\cl(\D)\to\cl(\D)$.

For $z\in \C\backslash\cl(B)$ we now put 
\begin{equation} \label{x21}
G(z)=\nu(\phi_2(\psi(\phi_1^{-1}(z)))).
\end{equation}
To show that the two expression agree on $\partial B$, note that a point $z\in\partial B$ 
is of the form $z=\phi_1(e^{it})$ for some $t\in [0,2\pi]$.
It follows that 
\[
\begin{aligned}
\nu(\phi_2(\psi(\phi_1^{-1}(z))))
&=\nu(\phi_2(\psi(e^{it})))=\nu(\phi_2(e^{ih(t)}))
\\ &
=\chi_2(h(t))=\chi_1(t)=G(\phi_1(e^{it}))=G(z),
\end{aligned}
\]
meaning that the two expressions in \eqref{x21} do indeed coincide for $z\in\partial B$.
So we have defined a locally quasiregular map $G\colon \C\to \C$.

The case that $N$ is even can be handled with the same method.
The only difference is that we choose the conformal map $\nu$ such that 
the sector $\{z\in\C\colon 0<\arg z < \pi/(2N-2),\; |z|<1\}$ is mapped onto
the upper half-plane, with $(0,1,e^{i\pi/(2N-2)})$ being mapped (by continuous extension) to $(\infty,0,1)$.
As before we obtain a locally univalent map 
$\nu$ from the half-disk $\{z\in\C\colon \re z> 0,\; |z|< 1\}$ to~$\C$ by repeated reflection.
The boundary curve of this half-disk, beginning at the origin, is again mapped to the intervals
$[\infty,1]$, $[1,0]$, $[0,1]$, $[1,0]$, $\dots$, $[1,0]$, $[0,1]$, $[1,\infty]$,
but this time we have
$\nu(e^{ij\pi/(2N-2)})=0$ for even $j\in\{-(N-1),\dots,N-1\}$ while $\nu(e^{ij\pi/(2N-2)})=1$ for odd~$j\in\{-(N-1),\dots,N-1\}$.
Apart from this the argument is identical.

\begin{remark}
The quasiregular map $\nu$ in the above proof was defined by an ad hoc construction.
A systematic way to construct such maps was described
by Nevanlinna \cite[no.~16]{Ne}; see also~\cite[no.~44]{Sibuya}.

A special case of the result proved there yields the
following. Let a polygon $K_p$ with $p$ sides and a
continuous function $F\colon \partial K_p\to [0,\infty]$
be given. Suppose that $F$ maps each side of $K_p$
homeomorphically onto one of the intervals $[0,1]$ and
$[1,\infty]$, with each interval occurring as
the image of at least one side. Then there exists a
locally homeomorphic extension of $F$ to $K_p$.

Nevanlinna used this to construct functions in the
class $\SS$ by gluing logarithmic ends to the sides
of this polygon. Instead, we glue restrictions of
our maps $G_0$ and $G_1$ to half-planes to the sides
of this polygon.
\end{remark}

\subsection{Estimate of the dilatation} \label{estimate-dilatation}
Let $L'$ be the preimage of $L\cup\overline{L}$ under $z\mapsto z^N$.
Since $\varphi(z)=z$ for $z\in L\cup\overline{L}$ we have
$G(z)=G_0(z^N)$ for $z\in L'\cap\Sigma_j$ if $2\leq j\leq 2N-1$ and
$G(z)=G_1(z^N)$ for $z\in L'\cap\Sigma_j$ if $j=1$ or $j=2N$.
For odd $j$ satisfying $2\leq j\leq 2N-1$ we thus have, as in sections~\ref{defG} and~\ref{est-dil},
\[
\int_{L'\cap\Sigma_j}\frac{K_G(z)-1}{x^2+y^2}dx\, dy
=\frac{1}{N^2}\int_{L}\frac{K_{G_0}(z)-1}{x^2+y^2}dx\, dy ,
\]
and for even $j$ in that range the same equation holds with $L$ replaced 
by $\overline{L}$ on the right side.
For $j=1$ and $j=2N$ these equations hold with $G_0$ replaced by~$G_1$.
Since $G_0$ and $G_1$ satisfy the hypothesis of the Teich\-m\"ul\-ler--Wit\-tich--Be\-linskii Theorem
we conclude that 
\[
\int_{L'}\frac{K_G(z)-1}{x^2+y^2}dx\, dy<\infty.
\]
Next we note that 
$\gamma_2(t)^{1/N}=(t+2\pi i)^{1/(N\rho_0)}$ for large~$t$. Since $N\rho_0>N/2\geq 1$ we thus have
$\im \gamma_2(t)\to 0$ as $t\to +\infty$.
So the distance of $\gamma_2(t)^{1/N}$ to the positive real axis tends to $0$ as $t\to +\infty$.
Similarly we see that the distance of $\gamma_2(t)^{1/N}$ to the ray $\{z\in\C\colon \arg z=\pi/N\}$
tends to $0$ as $t\to -\infty$.
We conclude that there exists $R>1$ such that $\C\backslash (D(0,R)\cup L')$ is contained 
in strips of width $1$ around the rays $\{z\in\C\colon \arg z=j\pi/N\}$, for $j=1,\dots,2N$.
Now
\[
\int_{\substack{|z|>R\\|\im z|<\frac12}} \frac{1}{x^2+y^2}dx\, dy
\leq \int_{\frac12 R}^\infty \frac{dx}{x^2}=\frac{2}{R}.
\]
Denoting by $K=\sup_{z\in \C}K_G(z)$ the dilatation of $G$ we conclude that 
\[
\int_{\C\backslash (D(0,R)\cup L')}\frac{K_G(z)-1}{x^2+y^2}dx\, dy\leq \frac{4NK}{R}<\infty.
\]
Overall we see that the quasiregular map 
$G$ satisfies the hypothesis of the Teich\-m\"ul\-ler--Wit\-tich--Be\-linskii Theorem.
Thus there exist an entire function $F$ and
a quasiconformal map $\tau\colon\C\to\C$ satisfying~\eqref{FG}.

\subsection{Completion of the proof} \label{completion}
It follows from~\eqref{GY} that $G$ has no zeros in~$Y_j$, for $0\leq j\leq 2$.
Since $G$ is symmetric with respect to the real axis, there are also no zeros 
in $\overline{Y_j}$,  for $0\leq j\leq 2$.
The construction in \S\ref{extension-bounded} shows that $G$ has no zeros in the
complement of~$B$. (In any case, since $\C\backslash B$ is bounded,
$\C\backslash B$ could contain only finitely many zeros.)
So all zeros of $G$ lie in $X\cup Z$.

Recall that $G(z)=G_0(\eta(z))$ for $z\in X$ and $G(z)=G_1(\eta(z))$ for $z\in Z$.
Next note that~\eqref{Gn} says that $n(r,0,G_0)=O(r^{\sigma_0})$ as $r\to\infty$.
The same argument shows that $n(r,0,G_1)=O(r^{\sigma_0})$  as $r\to\infty$.
Using~\eqref{asyeta} we now see that,
with $\sigma=N\sigma_0$,
\[
n(r,0,G)=O(r^{N\sigma_0}) =O(r^{\sigma}).
\]

Next, \eqref{GM} says that $\log\log |G_0(z)|\leq (1+o(1))|z|^{\rho_0}$
as $|z|\to\infty$, and the same argument yields that 
$\log\log |G_1(z)|\leq (1+o(1))|z|^{\rho_0}$.
Since $G$ is bounded in $\C\backslash \cl(X\cup Z)$, we now conclude 
from~\eqref{asyeta}
that
\[
\log\log |G(z)|\leq (1+o(1))|z|^{N\rho_0}= (1+o(1))|z|^{\rho}
\]
as $|z|\to\infty$.
As in the proof of Theorem~\ref{thm1}, we now deduce from~\eqref{FG} 
and the last two equations that
\[
N(r,0,F)=O(r^{\sigma})
\]
and
\[
\log\log |F(z)|\leq (1+o(1))|z|^{\rho},
\]
which together with the lemma on the logarithmic derivative again yields that
$E=F/F'$ satisfies
$m(r,1/E)=O(r^\rho)$ and
$N(r,0,E)=O(r^{\sigma})$
so that 
$T(r,E)=O(r^{\sigma})$.
Hence, using the well-known~\cite[Chapter~1, Theorem 7.1]{GO} inequality $\log M(r,E)\leq 3 T(2r,E)$, we see that
\begin{equation}\label{lE}
\log |E(z)|= O(|z|^\sigma)
\end{equation}
as $|z|\to\infty$.

As in the proof of Theorem~\ref{thm1}, the lemma on the logarithmic derivative 
also implies that $m(r,A)=2m(r,1/E)+O(\log r)=O(r^\rho)$
so that altogether we have 
$\lambda(E)\leq \rho(E)\leq \sigma$ and $\mu(A)\leq \rho(A)\leq \rho$.

For odd $j$ satisfying $1\leq j\leq 2N-1$ and $0<\varepsilon<1$ we put
\[
S_j(\varepsilon)=\left\{ z\in\C\colon \left|\arg z-\frac{j\pi}{N}\right|\leq (1-\varepsilon)\frac{\pi}{2\sigma_0 N}\right\}.
\]
The image of $S_j(\varepsilon)$ under the map $z\mapsto z^N$ is the 
sector $\{z\colon |\arg(-z)|<(1-\varepsilon)\pi/(2\sigma_0)\}$.
Noting that~\eqref{asyVG} holds with $G$ replaced by $G_0$ or $G_1$, and with $\sigma$ replaced by $\sigma_0$,
we deduce that $G(z)\to 1$ as $|z|\to\infty$  in $S_j(\varepsilon_1)$, for any given $\varepsilon_1\in (0,1)$.
For $0<\varepsilon_1<\varepsilon_2<\varepsilon_3<1$
we thus find that
\[
F(z)\to 1
\quad\text{as } |z|\to\infty
\text{ in } S_j(\varepsilon_1).
\]
Differentiating this as in \S \ref{AsyBeh},
we find that $|F'(z)|=o(1)$ as $z\to\infty$ in $S_j(\varepsilon_2)$ and
thus
\[
\log \frac{1}{|F'(z)|} = \log |E(z)| =  O(|z|^\sigma)
\quad\text{as } |z|\to\infty
\text{ in } S_j(\varepsilon_2)
\]
by~\eqref{lE}.
Differentiating again we conclude that 
\[
\left|\frac{F''(z)}{F'(z)}\right| = O(|z|^{\sigma-1})
\quad\text{and}\quad
\left|\frac{d}{dz}\frac{F''(z)}{F'(z)}\right| = O(|z|^{\sigma-2})
\quad\text{as } |z|\to\infty
\text{ in } S_j(\varepsilon_3).
\]
Using~\eqref{schwarz} we see that 
\begin{equation}\label{A(z)}
|A(z)| = O(|z|^{2\sigma-2})
\quad\text{as } |z|\to\infty
\text{ in } S_j(\varepsilon_3).
\end{equation}

For even $j$ we denote by $T_j$ the sector between $S_{j-1}$ and $S_{j+1}$.
Noting that~\eqref{asyG} also holds with $G$ replaced by $G_0$ or $G_1$, and with $\rho$ replaced by $\rho_0$,
we find that there exists a curve $\Gamma_j$  tending to $\infty$ in $T_j$ such that
\begin{equation}\label{M_UG}
|G(z)| \geq \exp\exp\!\left((1-o(1))|z|^\rho\right)
\quad\text{and}\quad
\arg z \to \frac{j\pi}{N}
\quad\text{as }\ z\to\infty,\ z\in\Gamma_j.
\end{equation}
Then also $|F(z)| \geq \exp\exp((1-o(1))|z|^\rho)$ as $z\to\infty$ in $\Gamma_j$.
Choosing $R$ sufficiently large we can thus deduce that $T_j$ contains  a
component $U$ of the set $\{z\in\C\colon |F(z)|>R\}$ satisfying 
\begin{equation}\label{M_U}
M_U(r):=\max_{\substack{|z|=r\\ z\in U}}|F(z)| \geq \exp\exp((1-o(1))r^\rho)
\end{equation}
as $r\to\infty$.

For large $r$ we choose $z_r\in U$ such that $|z_r|=r$ and $|F(z_r)|=M_U(r)$.
It follows from~\cite[equation (2.10)]{BRS} that with
\[
a(r)=\frac{d\log M_U(r)}{\log r}
\]
we have, for all $k\in\N$,
\[
F^{(k)}(z_r)\sim \left(\frac{a(r)}{z_r}\right)^k F(z_r)
\quad\text{as } r\to\infty,\; r\notin M,
\]
where $M\subset [1,\infty)$ is some exceptional set of finite logarithmic measure.
Now~\eqref{schwarz} yields that 
\[
A(z_r)=-\frac14 \left(\frac{a(r)}{z_r}\right)^2
\quad\text{as } r\to\infty,\; r\notin M.
\]
Noting that~\cite[equation (2.6)]{BRS} 
\[
a(r)\geq (1-o(1)) \frac{\log M_U(r)}{\log r}\geq \exp((1-o(1))r^\rho)
\]
as $r\to\infty$ we deduce
from~\eqref{A(z)} and~\eqref{M_U} that 
$\{z\in\C\colon |A(z)|>K|z|^p\}$ has at least $N$ unbounded components if 
$p=2N\rho_0/(2\rho_0-1)-2=2\sigma-2$ and $K$ is sufficiently large.

Since 
\[
\frac{N}{\rho}+\frac{N}{\sigma}=
\frac{1}{\rho_0}+\frac{1}{\sigma_0}=2,
\]
it now follows from~\eqref{ineq3} that we actually have
$\lambda(E)=\rho(E)=\sigma$ and $\mu(A)=\rho(A)=\rho$.
Hence~\eqref{eqN} holds.

\vspace{.1in}

{\em Mathematisches Seminar

Christian--Albrechts--Universit\"at zu Kiel

Ludewig--Meyn--Stra{\ss}e 4

24098 Kiel

Germany
\vspace{.1in}

Purdue University

West Lafayette, IN 47907

USA}

\end{document}